\pgfplotsset{width=5.5cm,compat=1.9}
\theoremstyle{definition} 
\newtheorem{theorem}{Theorem}[section]
\newtheorem{corollary}[theorem]{Corollary}
\newtheorem{lemma}[theorem]{Lemma}
\newtheorem{proposition}[theorem]{Proposition}
\newtheorem{example}[theorem]{Example}
\newtheorem{remark}[theorem]{Remark}
\newtheorem{conjecture}[theorem]{Conjecture}
\newtheorem{question}[theorem]{Question}
\newcommand{\Z}{\mathbb{Z}}
\newcommand{\one}{\mathbbm{1}}
\DeclareMathOperator{\Div}{Div}
\DeclareMathOperator{\Prin}{Prin}
\DeclareMathOperator{\outdeg}{outdeg}
\DeclareMathOperator{\val}{val}
\DeclareMathOperator{\gon}{gon}
\DeclareMathOperator{\supp}{supp}
\DeclareMathOperator{\Red}{Red}
\DeclarePairedDelimiter\abs{\lvert}{\rvert}
\DeclarePairedDelimiter\floor{\lfloor}{\rfloor}
\DeclarePairedDelimiter\norm{\lVert}{\rVert}
\DeclarePairedDelimiter\set{\{}{\}}
\DeclarePairedDelimiter\paren{(}{)}
\let\oldabs\abs
\def\abs{\@ifstar{\oldabs}{\oldabs*}}
\let\oldnorm\norm
\def\norm{\@ifstar{\oldnorm}{\oldnorm*}}
\let\oldparen\paren
\def\paren{\@ifstar{\oldparen}{\oldparen*}}
\title{Gonality sequences of graphs}
\author{Ivan Aidun$^{1}$} 
  \address{$^{1}$UW-Madison Department of Mathematics Van Vleck Hall 480 Lincoln Drive, Madison, WI  53706}
 \author{Frances Dean$^{2}$}
 \address{$^{2}$Institute for Health Metrics and Evaluation, University of Washington, Seattle, WA 98121}
  \author{Ralph Morrison$^{3}$}
   \address{$^{3}$Department of Mathematics and Statistics, Williams College, Williamstown, MA 01267}
 \author{Teresa Yu$^{4}$}
 \address{$^{4}$Department of Mathematics, University of Michigan, Ann Arbor, MI 48109}
  \author{Julie Yuan$^{5}$}
 \address{$^{5}$Department of Mathematics, University of Minnesota Twin Cities, Minneapolis, MN 55455
}
\begin{document}
\maketitle

\begin{abstract}
To any graph we associate a sequence of integers called the gonality sequence of the graph, consisting of the minimum degrees of divisors of increasing rank on the graph.  This is a tropical analogue of the gonality sequence of an algebraic curve. We study gonality sequences for graphs of low genus, proving that for genus up to $5$, the gonality sequence is determined by the genus and the first gonality. We then prove that any reasonable pair of first two gonalities is achieved by some graph. We also develop a modified version of Dhar's burning algorithm more suited for studying higher gonalities.
\end{abstract}

\noindent \emph{Keywords:}  graph, gonality sequence, chip-firing, Dhar's burning algorithm

\medskip

\noindent AMS subject classifications:  14T05, 05C57, 05C85

\section{Introduction}

In \cite{bn} and \cite{bn2}, Baker and Norine introduced a theory of divisors on finite graphs, analogous to the theory of divisors on algebraic curves.  Intuitively, a divisor on a graph can be viewed as a placement of integer numbers of chips on the vertices, with two placements equivalent if they differ by certain redistributions of chips called chip-firing moves.  This theory has been extended to metric graphs \cite{gk,mz}, and has been successfully applied to prove many results on algebraic curves \cite{cdpr}.

Two important numbers associated to a divisor $D$ on a graph (or on a curve) are the \emph{degree} $\deg(D)$, which is the total number of chips; and the \emph{rank} $r(D)$.  The rank of $D$ is an integer that, in the language of chip-firing, measures the extent to which the divisor $D$ can eliminate added debt to the graph.  The \emph{divisorial gonality}, or simply the \emph{gonality}, of a graph $G$ is the minimum degree of a divisor of positive rank. For algebraic curves, gonality is defined similarly; furthermore, in this setting, an equivalent definition of gonality is the minimum degree of a non-degenerate rational map from the curve to a line.

For both graphs and curves, we can also consider higher gonalities.  For $r\geq 1$, the $r^{\text{th}}$ gonality of a graph (or curve) is the minimum degree of a rank $r$ divisor:
\[\gon_r(G)=\min\{\deg(D)\,|\,\textrm{ $D\in\textrm{Div}(G)$ and $r(D)\geq r$}\}.\]
These higher gonalities are intimately related to one of the most significant open problems for divisor theory on graphs.

\begin{conjecture}[Brill-Noether Conjecture for Graphs, \cite{baker}]\label{conjecture:bn}  Choose $g,r,d\geq 0$, and let $\rho(g,r,d)=g-(r+1)(g-d+r)$.  If $\rho(g,r,d)\geq 0$, then every graph of genus $g$ has a divisor $D$ with $r(D)=r$ and $\deg(D)\leq d$.
\end{conjecture}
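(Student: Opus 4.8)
Since the statement is the existence half of graphical Brill--Noether and is, to date, open in full generality, the plan is to pursue the one mechanism that is already known to settle its first instance: Baker's specialization lemma, which compares divisor ranks on a graph to ranks on an algebraic curve degenerating to it. For $r=1$ this already yields the theorem, because $\rho(g,1,d)\ge 0$ is equivalent to $d\ge\lfloor(g+3)/2\rfloor$, and Baker's bound $\gon(G)\le\lfloor(g+3)/2\rfloor$ is precisely the assertion that such a rank-$1$ divisor exists. I would try to push the same three-part mechanism through for arbitrary $r$.

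The first step is a realization: given a graph $G$ of genus $g$, construct a regular semistable model $\mathcal{X}/R$ over a discrete valuation ring whose reduction graph is $G$ and whose generic fiber $X$ is a smooth projective curve of genus $g$. Concretely, glue copies of $\mathbb{P}^1$ along nodes according to the edges of $G$, arrange each node to have local equation $xy=t$ so that the total space is regular and every edge of the reduction graph has unit length, and take a one-parameter smoothing of the resulting nodal curve. Since every component is rational, the arithmetic genus of the special fiber equals $b_1(G)=g$, so $X$ has genus $g$ as required. The second step is algebraic input: by the classical Brill--Noether existence theorem of Kempf and Kleiman--Laksov, the hypothesis $\rho(g,r,d)\ge 0$ guarantees that $X$, over an algebraic closure of the fraction field, carries a divisor $\mathcal{D}$ with $\deg\mathcal{D}=d$ and $r_X(\mathcal{D})\ge r$. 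The third step applies Baker's lemma to this model: the specialization $D=\rho(\mathcal{D})$ satisfies $\deg D=d$ and $r_G(D)\ge r_X(\mathcal{D})\ge r$, where the unit edge lengths ensure that the tropical rank produced by the lemma agrees with the combinatorial rank on $G$. Finally, because removing a single chip lowers the rank by at most one, I would delete chips from $D$ one at a time until the rank drops to exactly $r$; the resulting divisor has rank $r$ and degree at most $d$, as wanted.

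The hard part is twofold. The routine-sounding but genuinely technical core is to carry out the realization uniformly: for every graph one must produce a smoothing that is simultaneously regular, has reduction graph exactly $G$ with unit lengths, and satisfies the hypotheses of the specialization lemma over a value field for which classical Brill--Noether existence is available. The deeper obstacle, and the reason the statement persists as a conjecture rather than a theorem, is that this argument is extrinsic: it imports the result from algebraic geometry and sheds no combinatorial light. A self-contained proof would instead require building, directly on $G$, a divisor that dominates every effective divisor of degree $r$, that is, certifying $r_G(D)\ge r$ by purely combinatorial means. For $r=1$ this certification is accessible through reduced divisors and Dhar's burning algorithm, but for higher $r$ the need to control all degree-$r$ subtractions at once is exactly the difficulty that existing methods cannot surmount, and it is what motivates the modified burning algorithm developed later in this paper.
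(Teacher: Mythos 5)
First, a point of orientation: the paper does not prove this statement. It is Conjecture \ref{conjecture:bn}, quoted from Baker, and the paper's only input concerning it is a citation of the verification for genus at most $5$, which is what powers Corollary \ref{brillnoether}. So there is no proof in the paper to compare yours against; the question is whether your proposed route could close the conjecture. It cannot as written, and the place where it breaks is precisely the reason the statement is still a conjecture.

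The gap is in your third step, specifically the sentence claiming that ``the unit edge lengths ensure that the tropical rank produced by the lemma agrees with the combinatorial rank on $G$.'' The Brill--Noether divisor $\mathcal{D}$ produced by Kempf and Kleiman--Laksov is only guaranteed to exist over $\bar{K}$, hence over some finite extension $K'$ of the fraction field $K$ of your DVR. To specialize it you must base-change your regular semistable model to the ring of integers of $K'$ and re-resolve the resulting singularities; each node $xy=t$ becomes a chain of rational curves, so the reduction graph of the new model is a \emph{subdivision} of $G$, not $G$ itself. The specialization of $\mathcal{D}$ is therefore supported at points of the metric graph $\Gamma$ (equivalently, at vertices of some subdivision of $G$), generally not at vertices of $G$, and Baker's specialization lemma yields $r_{\Gamma}(\mathrm{sp}(\mathcal{D}))\geq r$ --- a statement about metric-graph rank. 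This is exactly how Baker proved Brill--Noether existence for metric ($\mathbb{Q}$-)graphs, i.e.\ tropical curves. The rank-comparison theorem you are implicitly invoking (Hladk\'y--Kr\'a\v{l}--Norine, Luo) says that combinatorial and metric ranks agree for divisors \emph{already supported on $V(G)$}; it says nothing about how to move a divisor off the interiors of edges onto $V(G)$ without decreasing rank, and no such procedure is known. For the same reason, your claim that the case $r=1$ ``already yields the theorem'' is incorrect: Baker's bound $\gon_1(G)\leq\lfloor(g+3)/2\rfloor$ is a theorem for metric graphs, while for combinatorial graphs it is exactly Conjecture \ref{conjecture:gonality} of this paper, likewise open --- which is why even the genus-at-most-$5$ case requires the separate combinatorial argument the paper cites. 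Your other steps are sound: realizability of any graph as the reduction graph of a regular semistable model is known, and the final chip-deletion step (rank drops by at most one per removed chip, so it hits $r$ exactly) is the same subtraction argument as in the paper's Lemma \ref{strict}. But the bridge from the metric statement back to the combinatorial one is not a technicality; it is the entire open content of the conjecture.
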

In the language of higher gonalities, this conjecture can be rephrased as saying that if $G$ is a graph of genus $g$ and we have $\rho(g,r,d)\geq 0$, then $\gon_r(G)\leq d$.  This conjecture has been verified for graphs of genus at most $5$ \cite{brill}. In the special case of $r=1$, the conjecture is an upper bound on the first gonality of a graph.

\begin{conjecture}[Gonality Conjecture for Graphs, \cite{baker}]\label{conjecture:gonality} If $G$ is a graph of genus $g$, then $\gon_1(G)\leq\floor{\frac{g+3}{2}}$.
\end{conjecture}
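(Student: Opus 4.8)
Since $\rho\paren{g,1,\floor{\frac{g+3}{2}}}\ge 0$, this bound is precisely the $r=1$ case of Conjecture~\ref{conjecture:bn}, so the task reduces to producing, for every graph $G$ of genus $g$, a divisor of degree $\floor{\frac{g+3}{2}}$ and rank at least $1$. The first approach I would try is not to build such a divisor by hand, but to import the corresponding classical inequality from the theory of algebraic curves via Baker's Specialization Lemma.

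Concretely, the plan is as follows. First I would realize $G$ as the dual graph of a totally degenerate semistable curve $X$ of genus $g$ over a complete non-archimedean field: place a copy of $\mathbb{P}^1$ at each vertex and a node at each edge, so that the arithmetic genus of $X$ equals the first Betti number of $G$. Next I would invoke the classical gonality bound for smooth curves --- a consequence of the existence half of Brill--Noether theory (Kempf, Kleiman--Laksov), valid for \emph{every} curve --- which furnishes a $g^1_d$ on $X$ with $d=\floor{\frac{g+3}{2}}$, that is, a divisor $D$ with $\deg D = d$ and $r_X(D)\ge 1$. Finally I would specialize: the specialization $\tau_*(D)$ is a vertex-supported divisor on $G$ of the same degree, and Baker's Specialization Lemma gives $r_G\paren{\tau_*(D)}\ge r_X(D)\ge 1$, whence $\gon_1(G)\le d = \floor{\frac{g+3}{2}}$.

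The main obstacle, and the reason the statement is only a conjecture for combinatorial graphs, is the interface between the combinatorial and the analytic pictures. The Specialization Lemma is cleanest for a \emph{regular} semistable model, and making a model regular typically forces one to subdivide the edges of $G$: the reduction graph of the regular model is then a subdivision $G'$ of $G$, equivalently the underlying metric graph. Thus the argument above really controls $\gon_1(G')$ rather than $\gon_1(G)$, and since passing to a refined model (or to the metric graph) can only lower the divisorial gonality, the bound one obtains sits \emph{below} $\gon_1(G)$ --- exactly the wrong side for an upper bound. Closing this gap requires either exhibiting a regular lift whose reduction graph is $G$ itself, or arranging the $g^1_d$ on $X$ so that its specialization is supported on the original vertices with the rank estimate surviving; this is the crux and is precisely what is missing in general.

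If a uniform argument proved too hard, I would fall back to verifying the bound on restricted families. For graphs of small genus one can construct explicit divisors of degree $\floor{\frac{g+3}{2}}$ and certify their rank combinatorially, using the modified burning algorithm developed later in the paper to test rank and using the treewidth lower bound $\tw(G)\le\gon(G)$ to pin down exact values; this mirrors the known verification of Conjecture~\ref{conjecture:bn} for genus at most $5$ and dovetails with the low-genus analysis that is the focus of this work.
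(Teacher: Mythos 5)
You have set yourself an impossible task gracefully: this statement is an open conjecture (due to Baker, \cite{baker}), and the paper does not prove it. The only thing the paper establishes in its direction is Corollary \ref{theorem:up_to_5}, which records that $\gon_1(G)\leq\floor{\frac{g+3}{2}}$ holds for graphs of genus $g\leq 5$ as a consequence of the verification of Conjecture \ref{conjecture:bn} in \cite{brill}; for genus $6$ (Example \ref{example:g6}) the conjecture is only ever \emph{assumed}. So there is no proof in the paper to compare your attempt against, and a blind submission claiming a complete proof would have been a red flag.

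Judged on its own terms, your proposal is an accurate account of the standard strategy and, more importantly, of exactly why it fails. The reduction to the $r=1$ case of Conjecture \ref{conjecture:bn} is correct (with $d=\floor{\frac{g+3}{2}}$ one has $\rho(g,1,d)\in\{0,1\}$), the classical Kempf/Kleiman--Laksov existence theorem does furnish the required $g^1_d$ on any smooth curve, and Baker's Specialization Lemma is the right transfer mechanism. The obstruction you isolate is the genuine one: the $g^1_d$ is only guaranteed over an algebraically closed base, and recovering a \emph{regular} semistable model after base change forces blow-ups, so the reduction graph one actually controls is a subdivision of $G$, equivalently the metric graph. Since any positive-rank vertex-supported divisor on $G$ retains positive rank on the metric graph, the bound obtained this way sits at or below $\gon_1(G)$ --- the wrong side for an upper bound. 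This is precisely why the specialization argument proves the metric-graph version of the conjecture while the combinatorial version remains open, and why the present paper can only state it as Conjecture \ref{conjecture:gonality}. Your fallback of explicit low-genus verification is what the paper in fact relies on, via \cite{brill}. One minor correction: the treewidth inequality $\tw(G)\leq\gon_1(G)$ you invoke is a valid tool from the literature but is not used in this paper, which instead uses the edge-connectivity bound of Lemma \ref{lemma:lower_bound}; in any case, lower bounds on gonality cannot help establish the conjectured upper bound.
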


In this paper, we study the \emph{gonality sequence} of a graph $G$, which is the sequence of $r^{\text{th}}$ gonalities of $G$ as $r$ ranges from $1$ to $\infty$:
\[\gon_1(G),\gon_2(G),\gon_3(G),\gon_4(G),\cdots\]
Recent progress has been made towards determining the gonality sequences of various families of graphs, including the complete graphs $K_n$ \cite{panizzut} and the complete bipartite graphs $K_{m,n}$ \cite{bipartite}.  Work has also been done to study higher gonalities of Erd\"{o}s-Renyi random graphs \cite{higher_gonality_random}. A very ambitious program would be to answer the following question.
\begin{question}\label{question:main}
Which integer sequences are the gonality sequence of some graph?
\end{question}
We will see in Corollary \ref{corollary:determined_genus} that the genus of a graph can be read from its gonality sequence, so an answer to Question \ref{question:main} would furnish an answer to Conjecture \ref{conjecture:bn}; because of this, we believe a complete answer would be very difficult to obtain.  In this paper, we answer the question up to genus $5$; it turns out that for such graphs, the first gonality and the genus suffice to determine the gonality sequence.

\begin{theorem}\label{theorem:gonality_sequences}
Let $G$ be a graph of genus $g$.  If $g\leq 5$, then the gonality sequence of $G$ is determined by $g$ and $\gon_1(G)$. If $g\geq 6$, then there exist graphs \(G\) and \(H\) of genus \(g\) with \(\textrm{gon}_1(G)=\textrm{gon}_1(H)\) such that \(G\) and \(H\) do not have the same gonality sequence.
\end{theorem}

All possible gonality sequences of graphs of genus at most $5$, along with an example graph for each specified gonality sequence, appear in Table \ref{table:sequences}.  That table also contains the same data for graphs of genus $6$, under the assumption that Conjecture \ref{conjecture:gonality} holds for such graphs.

We also prove that any ``reasonable'' pair of first and second gonalities are achieved by some graph.

\begin{theorem}\label{theorem:first_two}
Let $m,n\in \mathbb{Z}$ with $1\leq m< n\leq 2m$. There exists a graph $G$ such that $\gon_1(G)=m$ and $\gon_2(G)=n$.
\end{theorem}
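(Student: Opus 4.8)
The plan is to first record why the constraints on $(m,n)$ are forced, and then to realize every admissible pair by an explicit family, computing $\gon_1$ and $\gon_2$ in each case. For necessity, recall that the gonality sequence is strictly increasing: if $r(D)\ge 1$ then $r(D-v)\ge r(D)-1$ for any vertex $v$, so removing one chip from a minimal-degree divisor of rank $r+1$ produces a divisor of degree one smaller and rank at least $r$; hence $\gon_{r+1}(G)\ge \gon_r(G)+1$, and in particular $n=\gon_2(G)>\gon_1(G)=m$. On the other side, rank is superadditive on effective divisors, so if $D,D'$ both have rank $1$ and degree $m$ then $r(D+D')\ge 2$, whence $\gon_2(G)\le 2\gon_1(G)=2m$. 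Thus $m<n\le 2m$ is the only obstruction, and the theorem asserts it is the only one.

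For sufficiency I would set $j=n-m\in\{1,\dots,m\}$ and pin $\gon_1$ at $m$ while tuning the ``gap'' $j$ by a single structural knob, using the two extreme gaps as a guide. For the minimal gap $j=1$ the complete graph $K_{m+1}$ works: it has $\gon_1=m$, and the all-ones divisor (one chip on every vertex, degree $m+1$) has rank $2$, since subtracting $v_i+v_j$ leaves an effective divisor and subtracting $2v_i$ gives something equivalent to $(m-1)v_i$ after firing the complement of $v_i$; together with $\gon_2\ge\gon_1+1$ this gives $\gon_2(K_{m+1})=m+1$. For the maximal gap $j=m$ I would take a strongly unbalanced complete bipartite graph $K_{m,N}$ with $N\gg 0$: placing one chip on each of the $m$ vertices of the small side yields a degree-$m$ divisor of positive rank (fire the complement of any leaf to pile $m$ chips onto it), so $\gon_1\le m$, matching the lower bound $\gon_1\ge\tw(K_{m,N})=m$; and twice this divisor is a rank-$2$ divisor of degree $2m$, giving $\gon_2\le 2m$, with equality being the hard content below. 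To interpolate I would add a suitably chosen graph $H$ on the size-$m$ side of $K_{m,N}$. The chasing argument keeping $\gon_1\le m$ is unaffected by these extra edges (they lie inside the fired complement), and $\tw$ only increases, so $\gon_1=m$ holds throughout the family, while enlarging $H$ from the empty graph toward $K_m$ should drive $\gon_2$ down from $2m$ toward $m+1$, realizing each intermediate value $n=m+j$ for an appropriate choice of $H$ and $N$.

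In every case the bound $\gon_1\le m$ comes from an explicit degree-$m$ positive-rank divisor, $\gon_1\ge m$ from treewidth (or a direct run of Dhar's burning algorithm from any vertex), and $\gon_2\le n$ from an explicit rank-$2$ divisor of degree $n$. The genuine difficulty, and the step I expect to be the main obstacle, is the matching lower bound $\gon_2\ge n$: one must certify that \emph{no} divisor of degree strictly less than $n$ has rank $2$. A brute-force search over equivalence classes is hopeless, so I would deploy the modified version of Dhar's burning algorithm developed in this paper, which is adapted to higher gonalities: for each candidate low-degree divisor one exhibits, via the burning process from suitable starting configurations, an effective degree-$2$ divisor that cannot be subtracted, forcing the rank down to $1$. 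Making this argument uniform across the family, and in particular pinning the exact value of $\gon_2$ as $H$ varies, is where the bulk of the work will lie, and it is precisely what the new burning machinery is designed to handle.
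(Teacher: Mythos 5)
Your necessity argument and your two extreme cases are essentially sound: $K_{m+1}$ does have $\gon_1=m$ and $\gon_2=m+1$ (the all-ones-divisor computation is correct, and strict monotonicity of the gonality sequence finishes it), and $K_{m,N}$ would handle $n=2m$ provided the lower bound $\gon_2(K_{m,N})\geq 2m$ were established. But the heart of the theorem is realizing \emph{every} intermediate value $n=m+j$ with $1<j<m$, and there your proposal has a genuine gap rather than a proof. You posit that adding an edge set $H$ inside the small side of $K_{m,N}$ ``should drive $\gon_2$ down from $2m$ toward $m+1$, realizing each intermediate value,'' but nothing supports this: gonality need not behave monotonically under edge addition, and even if $\gon_2$ did decrease from $2m$ to $m+1$ along some chain of edge additions, there is no discrete intermediate-value principle preventing it from skipping the particular value $n$ you need (a single added edge could drop $\gon_2$ by more than one). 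Computing $\gon_2$ exactly for a specific $H$ is precisely the content of the theorem, and it is exactly the step you defer. Likewise, postponing all the lower bounds $\gon_2\geq n$ to a future deployment of the modified burning algorithm does not close the gap: that algorithm certifies the rank of a given divisor on a given graph, so by itself it is not a uniform argument covering an infinite family of graphs and all divisors of low degree; one still has to organize a case analysis or an induction, which is where the actual work lies.

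For comparison, the paper sidesteps the interpolation problem entirely by constructing a two-parameter family in which both gonalities are computed exactly as functions of the parameters: the generalized banana graph $B^*_{a,b}$ (a path on $a$ vertices whose consecutive edge multiplicities increase up to $b$) satisfies $\gon_1(B^*_{a,b})=a$ and, for $2\leq a\leq b\leq 2a-1$, $\gon_2(B^*_{a,b})=b+1$. Both statements are proved by explicit Dhar-type case analyses on where the chips sit, the second with an induction on the number of vertices $a$. The theorem then follows by simply taking $G=B^*_{m,n-1}$, so the pair $(m,n)$ is realized directly rather than ``hit'' as a structural knob varies. If you want to salvage your approach, you would need to exhibit, for each $j$, an explicit $H$ and $N$ and then compute $\gon_2\bigl(K_{m,N}+H\bigr)$ exactly, which amounts to doing for your family the same kind of work the paper does for $B^*_{a,b}$.
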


Our paper is organized as follows.  In Section \ref{section:background}, we present general background and results for chip-firing on graphs.  In Section \ref{section:preliminary}, we present some preliminary results on gonality sequences of graphs, many arising as nice corollaries of the Riemann-Roch theorem for graphs.  In Section \ref{section:small_genus}, we prove Theorem \ref{theorem:gonality_sequences}, and also consider the possible gonality sequences of graphs of genus $6$.  In Section \ref{section:bananas}, we construct a family of graphs to prove Theorem \ref{theorem:first_two}.  We close in Section \ref{section:modified_dhars} with a modified version of Dhar's burning algorithm tailored to studying higher gonalities.

\section{Background and Notation}\label{section:background}

In this paper, all graphs will be finite, loopless, combinatorial multigraphs. The set of vertices of a graph $G$ is denoted $V(G)$, and the multiset of edges $E(G)$.  The \emph{genus} of a graph is defined as $g(G) := \abs{E(G)} - \abs{V(G)} + 1$.  If a vertex $v \in V(G)$ is an endpoint of an edge $e \in E(G)$, we use the notation $v \in e$.  We denote $E(v) \coloneqq \{ e \in E(G) \colon e \ni v \}$ and $E(u, v) \coloneqq E(u) \cap E(v)$.  Moreover, for $U, W \subset V(G)$, we denote $E(U, W) \coloneqq \displaystyle{\bigcup \set{E(u,w) \colon u \in U, w \in W}}$. The \emph{valence} of a vertex is the total number of edges incident to the vertex: $\val(v) \coloneqq \abs{E(v)}$.

\subsection{Divisor theory on graphs}

Given a graph $G$, a \emph{divisor} $D$ on $G$ is a $\Z$-linear sum of the vertices $V(G)$, and can be written as
\[
D=\sum_{v \in V(G)} D(v) \cdot v,
\]
where $D(v)\in\mathbb{Z}$ for all $v$. The set of all divisors on a graph $G$, denoted $\Div(G)$, forms an abelian group under coefficient-wise addition, namely the free abelian group generated by $V(G)$.

The \emph{degree} of a divisor $D$ is defined as the sum of its coefficients:
\[
\deg(D) \coloneqq \sum_{v \in V(G)} D(v). 
\]
The set of divisors on $G$ of a given degree $k$ is denoted $\Div^k(G)$. A divisor $D$ is said to be \emph{effective} if $D(v) \geq 0$ for all $v \in V(G)$. We use $\Div_+(G)$ to denote the set of all effective divisors on a graph $G$. The \emph{support} of an effective divisor $D \in \Div(G)$ is defined as
\[
\supp(D) \coloneqq \{ v \in V(G) \colon D(v) > 0 \}.
\]

The \emph{Laplacian matrix} of a graph $G$ is the $|V(G)|\times|V(G)|$ matrix with entries
\[
\mathcal{L}_{v,w} = 
\begin{cases}
\val(v) \quad &\text{if $v = w$} \\
-\abs{ E(v,w) } \quad &\text{if $v \neq w$}.
\end{cases}
\]
Let \(\mathcal{M}(G)=\textrm{Hom}(V(G),\mathbb{Z})\) be the abelian group consisting of all integer-valued fuctions on the vertices of \(G\); note that this may be identified with \(\textrm{Div}(G)\), and also with \(\mathbb{Z}^{|V(G)|}\).  The Laplacian operator $\Delta \colon \mathcal{M}(G) \to \Div(G)$ is then the map induced by the Laplacian matrix. We call a divisor \emph{principal} if it is in the image of $\Delta$. The set of all principal divisors on a graph $G$ is denoted $\Prin(G)$. Since the columns of $\mathcal{L}(G)$ sum to zero, $\Prin(G) \subset \Div^0(G)$. We can define an equivalence relation $\sim$ on divisors where $D \sim D'$ if and only if $D - D' \in \Prin(G)$. If this is the case, then $D$ and $D'$ are said to be \emph{linearly equivalent} divisors. It is clear that the degree of a divisor is invariant under linear equivalence. Given a divisor $D$, the \emph{linear system} associated to $D$ consists of all effective divisors linearly equivalent to $D$: 
\[
\abs{D} \coloneqq \{ D' \in \Div_+(G) \colon D' \sim D \}.
\]

We define the \emph{rank} $r(D)$ of a divisor $D$ as $r(D) = -1$ if $\abs{D} = \emptyset$ and 
\[
r(D) = \max \{ r \in \Z^+ \colon \abs{D - D'} \neq \emptyset \; \text{for all} \; D' \in \Div_+^r(G) \}
\]
otherwise. It can be easily shown that rank of a divisor is also preserved under linear equivalence.  We define the \emph{gonality} $\gon(G)$ of a graph $G$ to be
\[
\gon(G) \coloneqq \min \{ \deg(D) \colon D \in \Div_+(G), r(D) \geq 1\}.
\]

More generally, for fixed $r \in \Z_{>0}$, we define the $r^{th}$ gonality of $G$, denoted $\gon_r(G)$, as the minimum degree of a rank $r$ divisor:
\[
\gon_r(G) = \min \{ \deg(D) \colon D \in \Div(G), r(D) \geq r \}.
\]
Since the rank of a divisor can drop by at most one when a chip is removed, the minimum degree will be accomplished by a divisor of rank precisely \(r\), so this definition could equivalently have required  \( r(D) = r \).

Recall Conjecture \ref{conjecture:bn}, which says that if $G$ is a graph of genus $g$ and $\rho(g,r,d)\geq 0$, then $\gon_r(G)\leq d$. (Here $\rho(g, r, d) \coloneqq g - (r+1)(g - d + r)$.) It is proven in \cite{brill} that this conjecture holds for all graphs of genus $g \leq 5$. Hence, we have the following result, which will be useful in Section \ref{section:small_genus}.

\begin{corollary}
\label{brillnoether}\label{theorem:up_to_5}
For all graphs of genus $g \leq 5$, we have $\gon_1(G) \leq \left\lfloor \frac{g+3}{2} \right\rfloor$.
\end{corollary}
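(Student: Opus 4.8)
The plan is to obtain this as a direct specialization of the Brill--Noether Conjecture for Graphs (Conjecture \ref{conjecture:bn}) to the rank-one case, invoking the fact from \cite{brill} that this conjecture is a theorem for all graphs of genus $g \le 5$. Since the conjecture guarantees $\gon_r(G) \le d$ whenever $\rho(g,r,d) \ge 0$, it suffices to identify the smallest integer $d$ for which $\rho(g,1,d) \ge 0$ and to check that this $d$ is exactly $\left\lfloor \frac{g+3}{2}\right\rfloor$. Because the deep content lives entirely in \cite{brill}, I expect the argument to reduce to a short computation with no genuine obstacle.

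First I would compute $\rho(g,1,d)$ explicitly. Substituting $r=1$ into $\rho(g,r,d)=g-(r+1)(g-d+r)$ gives
\[
\rho(g,1,d)=g-2(g-d+1)=2d-g-2.
\]
Hence the inequality $\rho(g,1,d)\ge 0$ is equivalent to $d \ge \frac{g+2}{2}$, so the least admissible integer is $d=\left\lceil \frac{g+2}{2}\right\rceil$. Next I would verify the elementary identity $\left\lceil \frac{g+2}{2}\right\rceil = \left\lfloor \frac{g+3}{2}\right\rfloor$ by splitting on the parity of $g$: for even $g$ both expressions equal $\frac{g+2}{2}$, while for odd $g$ both equal $\frac{g+3}{2}$. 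Equivalently, one can simply set $d=\left\lfloor \frac{g+3}{2}\right\rfloor$ from the outset and confirm directly that $\rho(g,1,d)\ge 0$, noting $\rho=0$ when $g$ is even and $\rho=1$ when $g$ is odd.

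Finally, applying Conjecture \ref{conjecture:bn} (valid for $g\le 5$ by \cite{brill}) with $r=1$ and this choice of $d$ yields $\gon_1(G)\le \left\lfloor \frac{g+3}{2}\right\rfloor$, which is the claim. I would keep the written proof to this single specialization-and-computation, since the only substantive ingredient is the cited verification of the graph Brill--Noether conjecture in the relevant genus range.
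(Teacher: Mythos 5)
Your proposal is correct and is essentially identical to the paper's argument: the paper also obtains this corollary directly from the verification in \cite{brill} of Conjecture \ref{conjecture:bn} for $g \leq 5$, specialized to $r=1$, merely leaving implicit the computation $\rho(g,1,d) = 2d - g - 2 \geq 0 \iff d \geq \left\lceil \frac{g+2}{2}\right\rceil = \left\lfloor \frac{g+3}{2}\right\rfloor$ that you spell out. Your explicit parity check is a fine (and harmless) addition, but there is no substantive difference in approach.
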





In \cite{bn}, Baker and Norine prove a Riemann-Roch theorem for combinatorial graphs, analogous to the classical Riemann-Roch theorem for algebraic curves. We restate this result here. For a graph $G$, define the \emph{canonical divisor} $K$ to be
\[
K \coloneqq \sum_{v \in V(G)} (\val(v) - 2) \cdot (v).
\]

\begin{theorem}[Riemann-Roch for graphs]\label{RR}
For a divisor $D \in \Div(G)$, 
\[
r(D) - r(K - D) = \deg(D) + 1 - g(G).
\]
\end{theorem}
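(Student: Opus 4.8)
The statement is the Baker--Norine Riemann--Roch theorem, so the plan is to follow their combinatorial proof. The first move is a formal reduction. Since $\deg(K) = \sum_{v}(\val(v)-2) = 2\abs{E(G)} - 2\abs{V(G)} = 2g-2$, I set
\[
f(D) := r(D) - r(K-D) - \deg(D) + g - 1,
\]
and check directly, using $\deg(K-D) = 2g-2-\deg(D)$, that $f(K-D) = -f(D)$ for every divisor $D$. Hence it suffices to prove the single inequality $f(D) \geq 0$ for all $D$, i.e. $r(D) - r(K-D) \geq \deg(D) - g + 1$; applying this to $K-D$ and invoking the antisymmetry forces $f \equiv 0$. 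This replaces the equality by one inequality and focuses attention on the critical degree $g-1$, where $\deg(D) - g + 1 = 0$.

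The combinatorial engine is a distinguished family of divisors indexed by total orders of $V(G)$. For a linear order $<$ on the vertices, define
\[
\nu_< := \sum_{v \in V(G)} \paren{\abs{\set*{e = \{v,w\} \in E(G) : w < v}} - 1} \cdot v,
\]
which has degree $\abs{E(G)} - \abs{V(G)} = g-1$. Writing $>$ for the reversed order, a vertexwise count yields the crucial symmetry $\nu_< + \nu_> = K$. The proof then rests on two lemmas: (A) every $\nu_<$ satisfies $\abs{\nu_<} = \emptyset$; and (B) every divisor $D$ of degree $g-1$ with $\abs{D} = \emptyset$ is linearly equivalent to $\nu_<$ for some order $<$. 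Together with the effectivity test coming from $q$-reduced divisors --- every $D$ is equivalent to a unique $q$-reduced divisor $D_q$, for which $\abs{D}\neq\emptyset$ if and only if $D_q(q)\geq 0$ --- these lemmas feed an induction that raises $\deg(D)$ one chip at a time and compares the effectivity of $D$ and of $K-D$ against the family $\set{\nu_<}$ to establish $f(D) \geq 0$.

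I would prove (A) and (B) using Dhar's burning algorithm. For (A), I run the burning process on $\nu_<$ and use the order $<$ to show that the fire consumes the whole graph while some coefficient stays negative, certifying that no sequence of set-firings makes $\nu_<$ effective. For (B), given $D$ of degree $g-1$ with $\abs{D}=\emptyset$, its $q$-reduced representative has $D_q(q) \leq -1$; the order in which Dhar's fire reaches the vertices of $D_q$ supplies a total order $<$, and a short comparison shows $D_q = \nu_<$ up to a principal divisor. The symmetry $\nu_< + \nu_> = K$ then immediately shows that a degree-$(g-1)$ divisor $D$ with $\abs{D}=\emptyset$ also has $\abs{K-D}=\emptyset$, which is the degree-$(g-1)$ core of the theorem.

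The main obstacle is Lemma (B): proving that the family $\set{\nu_<}$ is \emph{complete}, in the sense that it captures, up to linear equivalence, every divisor of degree $g-1$ with empty linear system. Verifying that each $\nu_<$ is non-effective (Lemma (A)) and assembling the reduction are comparatively mechanical, but establishing completeness requires a genuine combinatorial argument --- organizing the vertices by the order in which Dhar's fire reaches them and checking that the resulting $\nu_<$ matches the given reduced divisor. Carefully tracking linear equivalence throughout this construction, and then propagating the degree-$(g-1)$ statement to all degrees via the monotonicity of rank under adding and removing chips, is where the real work lies.
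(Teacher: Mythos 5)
First, a point of comparison: the paper does not prove this statement at all --- Theorem \ref{RR} is imported verbatim from Baker and Norine's paper \cite{bn} --- so there is no internal proof to measure against, and your proposal is in effect a sketch of the Baker--Norine argument itself. Much of that sketch is sound. The antisymmetry reduction to the single inequality $f(D)\geq 0$ is correct; the divisors $\nu_<$ do have degree $g-1$ and satisfy $\nu_<+\nu_>=K$; and both Lemma (A) and Lemma (B) are provable by exactly the Dhar arguments you describe. (For (A): the $<$-minimal vertex $q$ has $\nu_<(q)=-1$, and every nonempty $S\subseteq V(G)\setminus\{q\}$ contains its $<$-minimal vertex $v$, which satisfies $\nu_<(v)<\outdeg_S(v)$; so $\nu_<$ is $q$-reduced with negative coefficient at $q$. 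For (B): the burning order gives $D_q\leq \nu_<$ coefficientwise, and equality of degrees forces $D_q=\nu_<$.)

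The genuine gap is the bridge from the degree-$(g-1)$ statement to the theorem in all degrees. Your Lemma (B) is artificially restricted to degree $g-1$, and the proposed induction ``raising $\deg(D)$ one chip at a time via monotonicity of rank'' does not work as stated: monotonicity runs in the wrong direction, since adding a chip to a divisor with empty linear system can make it effective, and nothing in your outline controls this. Concretely, to run such an induction you would need the augmentation lemma: if $\abs{D}=\emptyset$ and $\deg(D)<g-1$, then there is a vertex $v$ with $\abs{D+v}=\emptyset$. This is true, but its proof is precisely the \emph{unrestricted} form of your Dhar comparison: for any $D$ with $\abs{D}=\emptyset$, the $q$-reduced form satisfies $D_q\leq\nu_<$ for the burning order (now an inequality, since the degrees differ), so $\nu_<-D$ is equivalent to an effective divisor $F$; choosing $v\in\supp(F)$ and invoking Lemma (A) then yields $\abs{D+v}=\emptyset$. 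In other words, what the argument actually requires is the general Baker--Norine dichotomy --- for every divisor $D$, exactly one of $\abs{D}\neq\emptyset$ or $\abs{\nu_<-D}\neq\emptyset$ for some order $<$ holds --- and your degree-restricted Lemma (B) does not yield it without this extra work, making the plan as written circular or incomplete. The clean fix is to drop the degree restriction in Lemma (B) rather than induct on degree; with the dichotomy in hand, the theorem follows from the identity $r(D)+1=\min\{\deg^+(D'-\nu_<)\colon D'\sim D,\ < \text{ a total order}\}$ (where $\deg^+$ denotes the sum of the positive coefficients) combined with $\nu_<+\nu_>=K$, a final bookkeeping computation that is also absent from your sketch, though routine.
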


As a corollary, Baker and Norine also prove a version of Clifford's theorem for graphs. We say a divisor $D$ is \emph{special} if $r(K - D) \geq 0$. 

\begin{theorem}
Let $D$ be an effective, special divisor on a graph $G$. Then
\[
r(D) \leq \frac{1}{2}\deg(D).
\]
\end{theorem}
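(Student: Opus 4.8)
The plan is to deduce Clifford's inequality from the Riemann--Roch theorem (Theorem \ref{RR}) together with a subadditivity property of rank, mirroring the classical proof for algebraic curves. The engine of the argument is the inequality $r(D_1) + r(D_2) \le r(D_1 + D_2)$ for effective divisors $D_1, D_2$; granting this, the bound $r(D) \le \tfrac{1}{2}\deg(D)$ falls out of two applications of Riemann--Roch.

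First I would establish the subadditivity lemma: if $D_1, D_2 \in \Div_+(G)$ with $r(D_1) = r_1 \ge 0$ and $r(D_2) = r_2 \ge 0$, then $r(D_1 + D_2) \ge r_1 + r_2$. To see this, let $E$ be an arbitrary effective divisor of degree $r_1 + r_2$, and split it as $E = E_1 + E_2$ with $E_1, E_2$ effective of degrees $r_1$ and $r_2$ respectively (possible since $\deg E = r_1 + r_2$, taking $E_1$ to be any $r_1$ of the chips of $E$ and $E_2 = E - E_1$). By the definition of rank, $\abs{D_1 - E_1} \ne \emptyset$ and $\abs{D_2 - E_2} \ne \emptyset$, so there are effective divisors $F_1 \sim D_1 - E_1$ and $F_2 \sim D_2 - E_2$. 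Then $(D_1 + D_2) - E = (D_1 - E_1) + (D_2 - E_2) \sim F_1 + F_2 \ge 0$, so $\abs{(D_1 + D_2) - E} \ne \emptyset$. As $E$ was an arbitrary effective divisor of degree $r_1 + r_2$, we conclude $r(D_1 + D_2) \ge r_1 + r_2$.

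With this in hand, I would apply Riemann--Roch to the canonical divisor $K$: since $\deg K = \sum_v (\val(v) - 2) = 2\abs{E(G)} - 2\abs{V(G)} = 2g - 2$ and $r(0) = 0$, we obtain $r(K) = \deg(K) + 1 - g = g - 1$. Now suppose $D$ is effective and special, so that $r(D) \ge 0$ and $r(K - D) \ge 0$; choose an effective divisor $E \sim K - D$, so $r(E) = r(K - D)$ and $D + E \sim K$. Subadditivity then gives $r(D) + r(K - D) = r(D) + r(E) \le r(D + E) = r(K) = g - 1$. On the other hand, Riemann--Roch applied to $D$ reads $r(D) - r(K - D) = \deg(D) + 1 - g$. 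Adding this equality to the previous inequality eliminates $r(K - D)$ and yields $2\,r(D) \le \deg(D)$, which is exactly $r(D) \le \tfrac{1}{2}\deg(D)$.

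The only genuinely delicate step is the subadditivity lemma, and within it the bookkeeping between effectivity and linear equivalence: one must ensure that an arbitrary test divisor $E$ of degree $r_1 + r_2$ can be split into effective pieces of the prescribed degrees, and that the two resulting equivalences combine additively to an effective divisor. Everything after that is a formal manipulation of the Riemann--Roch identity, so I expect the substance of the argument—and the only place a subtle error could creep in—to lie in verifying the lemma and in the reduction that replaces the special divisor $K - D$ by a genuinely effective representative before applying it.
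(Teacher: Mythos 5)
Your proof is correct. Note that the paper gives no proof of this statement at all: it simply quotes the result from Baker and Norine \cite{bn} as a corollary of Riemann--Roch, and your argument is precisely the standard one used there — the subadditivity inequality $r(D_1)+r(D_2)\le r(D_1+D_2)$ for effective divisors of nonnegative rank, the computation $r(K)=g-1$, and Theorem \ref{RR} applied to $D$. It is also worth observing that your subadditivity lemma is proved by exactly the same split-and-recombine argument the paper itself uses for Lemma \ref{sum}, and your reduction replacing $K-D$ by an effective representative is justified by the invariance of rank under linear equivalence, which the paper records; so every ingredient you use is already in the paper's toolkit, correctly assembled.
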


We can similarly study a theory of divisors on metric graphs, which have lengths associated to each edge; the key difference here is that a divisor can have support at points in the interiors of edges.  Equivalence of divisors is then defined in terms of \emph{tropical rational functions}, similar to how divisor theory on algebraic curves is defined in terms of rational functions.  Once we have a notion of equivalence, we can define most terms (effective, degree, rank, gonality, the canonical divisor) in parallel, and even have a metric graph version of the Riemann-Roch theorem;  see  \cite{gk,mz} for more details.

One way to translate results between the finite and metric settings is the following: given a finite graph \(G\), associate to \(G\) a metric graph \(\Gamma\) by assigning a length of \(1\) to each edge of \(G\).  Given a divisor \(D\) on \(G\), we can consider the corresponding divisor on \(\Gamma\); it turns our that these two divisors have the same rank \cite{rank_of_divisors}.  As an example of a result that holds in both the finite and metric cases, we recall the following result from \cite{coppens} (which is also referred to as Clifford's theorem on graphs). 

\begin{theorem}
\label{coppensclif}
Let $G$ be a (finite or metric) graph of genus $g$, and assume that there exists a divisor $D$ of rank $r$ with $2\leq r\leq g-2$ such that $\deg(D)=2r$.  Then $G$ has a divisor of rank $1$ and degree $2$.
\end{theorem}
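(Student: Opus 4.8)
The plan is to exploit the symmetry between $D$ and $K-D$ under Riemann--Roch and then to peel chips off an extremal divisor one step at a time until a divisor of rank $1$ and degree $2$ remains. First I would record the consequences of Theorem \ref{RR}. Writing $r=r(D)$ and $\deg D = 2r$, Riemann--Roch gives $r(K-D)=g-1-r$ and $\deg(K-D)=2(g-1-r)$, so $K-D$ is again an effective divisor meeting Clifford's theorem with equality. The hypothesis $2\le r\le g-2$ is exactly what guarantees both $r(D)\ge 1$ and $r(K-D)\ge 1$, so both systems genuinely move and the whole configuration is symmetric under $D\leftrightarrow K-D$. A basic computation I would isolate first: if $E$ is effective, special, with $r(E)=\tfrac12\deg E$ and $v\in\supp(E)$, then $r(E-v)=r(E)-1$. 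Indeed $r(E-v)\ge r(E)-1$ always, while $E-v$ is still effective and special (adding a chip never lowers rank, so $r(K-(E-v))\ge r(K-E)\ge 0$), and $\deg(E-v)$ is odd, so Clifford forces $r(E-v)\le\tfrac12\deg(E-v)-\tfrac12=r(E)-1$.

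The main reduction is an induction downward on $r$, driven by the following key lemma: an effective special divisor $D$ with $r(D)=\tfrac12\deg D=r\ge 2$ and $r(K-D)\ge 1$ admits two vertices $v,w$ (after replacing $D$ by a linearly equivalent effective divisor) with $r(D-v-w)=r-1$. Granting this, $D-v-w$ is effective of degree $2(r-1)$ with rank $r-1$, hence again extremal for Clifford, and $r(K-(D-v-w))=g-r\ge 1$ keeps the hypotheses intact; iterating until $r=1$ produces a divisor of rank $1$ and degree $2$, as desired. At the very top of the range, $r=g-2$, one may shortcut the induction entirely: there $K-D$ already has degree $2$ and rank $1$.

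The key lemma is where the real work lies, and I expect it to be the main obstacle. By the single-chip computation, removing one chip from $D$ always drops the rank by exactly one, so the content of the lemma is to find a second chip whose removal costs nothing, that is, to show that some rank-$(r-1)$ system of the form $\abs{D-v}$ has a base point. This cannot follow from Riemann--Roch and Clifford formally: a naive attempt stalls because the intermediate systems may be base-point free, which is precisely the combinatorial shadow of a trigonal (non-hyperelliptic) curve carrying a base-point-free $g^1_3$ but no $g^1_2$. The existence of the \emph{full} rank-$r$ system, and not merely of $\abs{D-v}$, must be used.

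The mechanism I would pursue is a counting and propagation argument on $N\coloneqq K-D$: adding the $2r$ chips of $D$ to $N$ one at a time raises the rank from $r(N)=g-1-r$ to $r(K)=g-1$, a total rise of $r$ over $2r$ steps, each step raising the rank by $0$ or $1$; so on average every two added chips raise the rank once. The lemma amounts to localizing one such rise to a single pair added directly to $N$, equivalently to ruling out the configuration in which \emph{every} pair of chips added to $K-D$ fails to raise the rank. Turning this averaging heuristic into a genuine argument is the crux: I would attack it with reduced divisors and Dhar's burning algorithm, as developed later in the paper, to control precisely where chips can be fired, rather than relying on Riemann--Roch and Clifford alone.
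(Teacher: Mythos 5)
Your peripheral computations are all correct: the Riemann--Roch bookkeeping ($r(K-D)=g-1-r$, $\deg(K-D)=2(g-1-r)$, so the hypotheses are symmetric under $D\leftrightarrow K-D$), the single-chip step (effectiveness plus speciality plus Clifford plus integrality of rank forces $r(E-v)=r(E)-1$), and the shortcut at $r=g-2$, where $K-D$ itself is the desired divisor. But the proposal is not a proof, and you in effect concede this: everything rests on the ``key lemma'' that some effective representative of $D$ contains chips $v,w$ with $r(D-v-w)=r-1$ (equivalently, that $\abs{D-v}$ has a base point for a suitable choice of representative and $v$), and for this you offer only an averaging heuristic --- a total rank gain of $r$ as the $2r$ chips of $D$ are added to $K-D$ one at a time --- plus a statement that you would attack the localization problem with reduced divisors and Dhar's burning algorithm. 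The heuristic does not localize anything: nothing prevents the $r$ rank increments from being distributed so that no single pre-chosen pair of added chips accounts for a rise, and you supply no mechanism that forces such a pair to exist. This is not a technical loose end; it is the entire content of the theorem. As you yourself observe, the lemma cannot follow formally from Riemann--Roch and Clifford, and your induction reduces the theorem to a claim that is essentially as strong as the theorem itself, so the reduction carries no real progress.

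For context, the paper you are working from does not prove this statement at all: it is imported from Coppens \cite{coppens} (and then used in Corollary \ref{corollary:g-2} and Lemma \ref{lemma:G_and_H}), precisely because the proof is a substantial, self-contained combinatorial argument about reduced divisors rather than a corollary of Theorem \ref{RR} and Clifford's inequality. So there is no shorter ``paper proof'' your outline approximates; what your outline omits is exactly the work done in Coppens' paper. To turn your proposal into a proof you would need to actually establish the base-point/pair-removal lemma --- for instance by a genuine analysis of $q$-reduced representatives of $D$ --- and at present that step is entirely missing.
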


This result is proved in the metric case, although \cite[\S 1]{coppens}  presents an argument that shows it is also true for finite graphs.  Part of this is straightforward:  given a divisor of degree \(2r\) and rank \(r\) on \(G\), the corresponding divisor on the unit-length metric graph \(\Gamma\) has the same degree and rank, and so \(\Gamma\) must have a divisor of rank \(1\) and degree \(2\).  The challenge is then that arguing that \(G\) also has a divisor of rank \(1\) and degree \(2\); this is not obvious, since a divisor on \(\Gamma\) could be supported on points not corresponding to vertices of \(G\).  This portion of the argument is accomplished via a structure theorem on those metric graphs with a divisor of rank \(1\) and degree \(2\) \cite{chan}.

\subsection{Chip-Firing Games and Graph Gonality}

We now present an intuition for divisors in the language of chip configurations and chip-firing. For a divisor $D$ and a vertex $v$, we can regard $D(v)$ as representing an integer number of chips sitting on $v$, with $D(v) < 0$ indicating that $v$ is ``in debt.'' Thus, a divisor $D$ represents some configuration of chips on the graph. This will allow us to present an alternative definition of gonality in terms of the \emph{Baker-Norine chip-firing game}.

In this game, the only legal moves are \emph{chip-firing} moves. Given a vertex $v \in V(G)$, in order to chip-fire from $v$, we subtract $\val(v)$ chips from $v$ and to each $u$ adjacent to $v$ add $\abs{E(v,u)}$ chips. The Baker-Norine chip firing game is played in the following way:

\begin{enumerate}
    \item The player places $k$ chips on the vertices $V(G)$ of a graph $G$.
    \item An opponent chooses a vertex $v \in V(G)$ from which to subtract a chip.
    \item The player attempts to reach a configuration of chips where no vertex is in debt via a sequence of chip-firing moves.
\end{enumerate}

If the player can achieve a configuration of chips where every vertex is out of debt, then they win.  As discussed in \cite{bn}, chip-firing moves correspond to subtracting principal divisors, implying that the gonality of a graph is equivalent to the minimum number of chips $k$ that must initially be placed on the graph in order to guarantee a winning strategy for the first player.  Similarly, the $r^{\text{th}}$ gonality of $G$ is the minimum number of chips required to win the Baker-Norine chip-firing game if the opponent is allowed to subtract $r$ chips of their choice instead of just $1$.

Computing the gonality of a graph is difficult. For a given graph $G$, to show that $\gon(G) = k$, one must show that both of the following conditions hold:

\begin{enumerate}
	\item There exists a divisor $D$ with $\deg(D) = k$ and $r(D) \geq 1$. That is, there exists a chip configuration with $k$ chips such that no matter where the opponent subtracts a chip, the player can always reach an effective chip configuration.
    \item For \textit{all} divisors $D'$ with $\deg(D') < k$, we have $r(D') < 1$. That is, no configuration of fewer chips wins the chip-firing game.
\end{enumerate}

There are some techniques that can be used to bound the gonality of a graph. For example, one can use other invariants of the graph, such as edge-connectivity. The following lower bound on gonality is stated in \cite{liyau} and proved in \cite{gonthree}.
\begin{lemma}\label{lemma:lower_bound}
Let $\eta(G)$ denote the edge-connectivity of a graph $G$.  Then $\gon_1(G)\geq \min\{\eta(G), \abs{V(G)}\}$.
\end{lemma}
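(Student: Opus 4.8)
The plan is to exhibit a single edge cut whose size is bounded by $\gon_1(G)$, and to handle the minimum by a case split. Let $D$ be an effective divisor of minimal degree with $r(D)\geq 1$, so that $\deg(D)=\gon_1(G)$. If $\deg(D)\geq \abs{V(G)}$, then the desired inequality $\gon_1(G)\geq\min\set*{\eta(G),\abs{V(G)}}$ holds immediately, so I would assume $\deg(D)<\abs{V(G)}$ and aim to prove the stronger bound $\gon_1(G)\geq\eta(G)$ in this case. This case split is not cosmetic: for multigraphs $\eta(G)$ may exceed $\abs{V(G)}$ (for instance, two vertices joined by many parallel edges), so the minimum is genuinely needed, and the argument below only produces a cut when a chipless vertex is available.

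Because $\deg(D)<\abs{V(G)}$ and $D$ is effective, some vertex $q$ satisfies $D(q)=0$. Since $r(D)\geq 1$, the linear system $\abs{D-q}$ is nonempty, so there is an effective divisor $E\sim D$ with $E(q)\geq 1$. As $E(q)\geq 1>0=D(q)$ we have $E\neq D$, so writing $E=D-\Delta(f)$ for a firing function $f\colon V(G)\to\Z$, the function $f$ is non-constant. I would then pass to the top level set $A\coloneqq\set*{v\in V(G)\colon f(v)=\max f}$, which is a nonempty proper subset of $V(G)$ since $f$ is non-constant.

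The key step is to read off a boundary bound from the effectivity of $E$ on $A$. For each $v\in A$ I would expand $E(v)=D(v)-\sum_{u}\abs{E(v,u)}(f(v)-f(u))$; since $v$ attains the maximum of $f$, every term $f(v)-f(u)$ is nonnegative and is at least $1$ whenever $u\notin A$, giving $E(v)\leq D(v)-\abs{E(\set*{v},A^c)}$, where $A^c\coloneqq V(G)\setminus A$. The effectivity bound $E(v)\geq 0$ then yields $\abs{E(\set*{v},A^c)}\leq D(v)$. Summing over $v\in A$ and using that $D$ is effective gives $\abs{E(A,A^c)}\leq\sum_{v\in A}D(v)\leq\deg(D)$. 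Since $A$ is a nonempty proper subset, $E(A,A^c)$ is a genuine edge cut, so $\eta(G)\leq\abs{E(A,A^c)}\leq\deg(D)=\gon_1(G)$, as desired.

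I expect the main obstacle to be organizing the level-set bookkeeping in the key step cleanly — in particular, verifying that the top level set alone suffices, rather than summing contributions across all level sets, and setting up the firing function so that the presence of a chipless vertex $q$ forces $f$ to be non-constant. The conceptual subtlety is recognizing why a chipless vertex is needed at all: without the assumption $\deg(D)<\abs{V(G)}$ one cannot guarantee $E\neq D$, and this is exactly the phenomenon that makes the minimum with $\abs{V(G)}$ necessary.
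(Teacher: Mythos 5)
Your proposal is correct. Note, however, that the paper itself does not prove this lemma at all: it only states it, attributing the statement to one reference and the proof to another, so there is no internal proof to compare against. Your argument is a valid, self-contained proof, and it is essentially the standard one: minimal effective positive-rank divisor, a chipless vertex $q$ (which exists exactly when $\deg(D)<\abs{V(G)}$, and whose possible absence is precisely why the minimum with $\abs{V(G)}$ must appear), an equivalent effective divisor with a chip on $q$, and then a cut bound extracted from the top level set of the firing function. The Laplacian expansion $E(v)=D(v)-\sum_u\abs{E(v,u)}(f(v)-f(u))$ with $v$ a maximizer of $f$ does give $\abs{E(\set{v},A^c)}\leq D(v)$, and summing over $A$ yields $\eta(G)\leq\abs{E(A,A^c)}\leq\deg(D)$; the top level set alone indeed suffices, with no need to aggregate over all level sets. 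The only points worth tightening are cosmetic: you should say explicitly that $G$ is connected (otherwise $\eta(G)=0$ and the claim is vacuous), and you reuse the symbol $E$ both for the divisor and for edge sets $E(\cdot,\cdot)$, which invites confusion. For comparison, the proof in the cited literature runs through reduced divisors and the fact (Lemma 2.12 of the paper, from van Dobben de Bruyn) that equivalent effective divisors are connected by subset-firings introducing no debt: the first fired set $A$ then satisfies $D(v)\geq\outdeg_A(v)$ for all $v\in A$, giving the same cut bound. Your level-set computation re-derives that ``no new debt from the top level set'' fact by hand instead of quoting it, which makes the argument more elementary and fully self-contained.
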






In addition, there are techniques to determine if a chip configuration is ``winning'' (that is, has positive rank). In \cite{db}, van Dobben de Bruyn presents a theory of subset-firing, along with several useful associated results. We provide a brief summary here. For a subset $A \subset V(G)$ and a vertex $v \in A$, we define the \emph{outdegree of $A$ at $v$} as
\[
\outdeg_A(v) \coloneqq \abs{E(\set{v},V(G) - A)}.
\]
The total outdegree of a subset $A$ is $\outdeg_A(A) \coloneqq \sum_{v \in A} \outdeg_A(v)$. Notice that because chip-firing moves are commutative in the Baker-Norine chip-firing game, one can fire \textit{subsets} of vertices, rather than just single vertices. In order to fire a subset $A \subset V(G)$, we send one chip along each edge $e \in E(A, V(G)-A)$. Hence, each vertex \(v\) in $A$ loses $\outdeg_A(v)$ chips. The following result is proven in \cite{db}.

\begin{lemma}
Given two effective divisors $D$ and $D'$ such that $D \sim D'$, there exists a finite sequence of subset-firing moves which transforms $D$ into $D'$ without introducing debt in any vertex of the graph.
\end{lemma}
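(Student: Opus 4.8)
The plan is to reduce the whole problem to one well-chosen order of firings coming from the level sets of the function that witnesses the linear equivalence. Since $D \sim D'$, by definition $D - D' \in \Prin(G) = \im(\Delta)$, so there is an integer-valued function $f \colon V(G) \to \Z$ with $D - D' = \Delta(f)$. Because $\Delta$ annihilates constant functions, I may add a constant to $f$ and assume $\min_v f(v) = 0$; set $M \coloneqq \max_v f(v)$. The moves I will use are exactly the firings of the nested level sets
\[
A_j \coloneqq \{\, v \in V(G) \colon f(v) \geq j \,\}, \qquad j = 1, \dots, M,
\]
which satisfy $A_M \subseteq A_{M-1} \subseteq \cdots \subseteq A_1$ and $f = \sum_{j=1}^M \one_{A_j}$. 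Recalling that firing $A_j$ subtracts $\Delta(\one_{A_j})$ from the current divisor, I propose to fire the sets in order of \emph{decreasing} level: first $A_M$, then $A_{M-1}$, and so on down to $A_1$.

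The key bookkeeping step is to identify the intermediate divisors. After firing $A_M, \dots, A_{j+1}$, the current divisor is
\[
D_j \coloneqq D - \Delta\!\left(\textstyle\sum_{i=j+1}^M \one_{A_i}\right) = D - \Delta(g_j), \qquad g_j \coloneqq (f - j)^+,
\]
so that $D_M = D$ and $D_0 = D - \Delta(f) = D'$. It therefore suffices to show that every $D_j$ is effective, since each single step $D_j \mapsto D_{j-1}$ is the firing of the one set $A_j$ and, as all vertices of $A_j$ fire simultaneously, such a move introduces debt nowhere precisely when its result is effective.

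The heart of the argument — and the step I expect to require the most care — is the effectiveness of the intermediate $D_j$, which I would establish by sandwiching each coordinate between $D$ and $D'$. Writing $\Delta(g_j)(v) = \sum_{w \sim v} \abs{E(v,w)}\,(g_j(v) - g_j(w))$, I would split into two cases. If $f(v) \leq j$ then $g_j(v) = 0$, so $\Delta(g_j)(v) \leq 0$ and hence $D_j(v) \geq D(v) \geq 0$. If $f(v) > j$, then a short check of the truncation shows $g_j(v) - g_j(w) \leq f(v) - f(w)$ for every neighbor $w$ (with equality when $f(w) \geq j$, and a strict drop otherwise), whence $\Delta(g_j)(v) \leq \Delta(f)(v)$ and therefore $D_j(v) \geq D(v) - \Delta(f)(v) = D'(v) \geq 0$. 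In both cases $D_j(v) \geq 0$, so $D_j$ is effective. Finiteness is immediate because the sequence consists of exactly $M$ subset firings. The only genuine subtlety is getting the firing order right: firing the level sets from the top down guarantees that each vertex only begins to shed chips after it has received whatever it will gain from the higher levels, and reversing the order can create debt, as simple examples on a path already show.
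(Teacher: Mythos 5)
Your proof is correct, and it is essentially the argument the paper relies on: the lemma is cited to \cite{db}, whose proof (recalled in Section \ref{section:modified_dhars} as the level set decomposition and \cite[Theorem 3.10]{db}) fires exactly the same nested level sets of $f$ from the top value down, with your two-case estimate amounting to the same sandwich bound $D_j(v) \geq \min\{D(v), D'(v)\}$ on every intermediate divisor.
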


\subsection{Dhar's Burning Algorithm}
\label{subsection:usual_dhars}

Given a vertex $v$, we say a divisor $D$ is \emph{$v$-reduced} if it satisfies two properties:
\begin{enumerate}
\item for each $v' \in V(G) - \{v\}$, $D(v') \geq 0$; and
\item for any nonempty $A \subset V(G) - \{v\}$, there exists some $v' \in A$ such that $D(v') < \outdeg_{A}(v')$.
\end{enumerate}
If $D$ only satisfies the first condition, then we say that $D$ is \emph{$v$-semi-reduced}.

It is proven in \cite[Proposition 3.1]{bn}
that given a divisor $D \in \Div(G)$ and a vertex $v \in V(G)$, there exists a unique $v$-reduced divisor $D'$ such that $D' \sim D$. Let $\Red_v(D)$ denote this unique $v$-reduced divisor. It follows that for a divisor $D \in \Div(G)$, $r(D) \geq 1$ if and only if $\Red_v(D)(v) \geq 1$ for each $v \in V(G)$.

Perhaps the most famous algorithm in chip-firing games is \emph{Dhar's burning algorithm}, named after the physicist who developed it for use in the study of sandpile models \cite{dhar}.  Given a semi-reduced divisor $D$ and a vertex $q$, this algorithm checks whether or not $D$ is $q$-reduced, and if not, outputs a subset of vertices $W\subset V(G)\setminus\{q\}$ that can be fired without introducing new debt.  By firing $W$ and iterating this algorithm, one can compute the unique $q$-reduced divisor equivalent to the semi-reduced divisor $D$; we refer to this iterated algorithm as the \emph{iterated Dhar's burning algorithm}, presented here as Algorithm \ref{originaldhars}.  Note that this algorithm recursively calls itself with  $\textbf{Alg}$.  For algorithms to $q$-semi-reduce a divisor $D$ with respect to a vertex $q$, see \cite{bs, db}.

\begin{algorithm}
\caption{Iterated Dhar's Burning Algorithm}
\label{originaldhars}
\begin{algorithmic}
	\Require vertex $q \in V(G)$, and $q$-semi-reduced divisor $D \in \Div(G)$
	\Ensure $\Red_q(D)$
	\State $W \coloneqq V(G) \setminus\set{q}$
	\While{$W \neq \emptyset$} 
	    \If{$D(v) < \outdeg_W(v)$ for some $v \in V(G)$} 
	    \State $W = W\setminus \{v\}$ \Comment{$v$ burns}
	    \Else
	        \State return $\textbf{Alg}(q, D - \Delta \mathbbm{1}_{W})$
        \EndIf
	\EndWhile
	return $D$ \Comment{entire graph burned}
\end{algorithmic}
\end{algorithm}

Given a divisor $D$, we can describe Algorithm \ref{originaldhars} in terms of a fire spreading through our graph with the chips of the divisor representing firefighters, each of whom can fight off one incoming fire. The fire starts at $q$, the vertex we are reducing with respect to.  Whenever a vertex burns, the fire spreads to all edges incident with that vertex. If a vertex has at least as many chips (firefighters) as it has incident burning edges, then the vertex is protected. Otherwise, that vertex burns as well. The fire spreads until no new vertices catch fire, at which point we chip-fire all the unburned vertices and begin this process again.  If the whole graph burns, then the divisor is $q$-reduced.

We now present an argument that this algorithm terminates and is correct; we will use similar arguments for Algorithm \ref{algorithm:modified_dhars} later in this paper. Each firing set $W$ delivered by the algorithm preserves $q$-semi-reducedness, since $D(v)\geq \outdeg_W(v)$ whenever $W$ is fired, so no new debt is ever introduced.  To see that the algorithm will eventually terminate, suppose we are given the divisor $D$, effective except perhaps at $q$, and let $D_q$ denote the (unique) $q$-reduced divisor of $D$. For $u,v\in V(G)$, denote by $d(v,u)$ the length of the shortest path from $v$ to $u$.  Let $d$ be the diameter of the graph $G$, defined as the greatest length of a shortest path between two vertices of $G$: $d=\textrm{diam(G)}:= \max_{v,u \in V(G)} d(v,u)$.  For $0 \leq i \leq d$, let $S_i = \set{v \in V(G): d(v,q) = i}$.  We define the function $\beta_q\colon \Div(G) \to \Z^{d+1}$ by
\[\beta_q(D) = \paren{\sum_{v \in S_0} D(v), \sum_{v \in S_1} D(v),\dots, \sum_{v \in S_d} D(v)}.\]
That is, the $i^{th}$ component of $\beta_q(D)$ is the number of chips in the configuration $D$ that are distance $i$ from $q$.

Note that each recursive call made by Algorithm \ref{originaldhars} involves firing a (nonempty) subset $W \subset V(G)$. Use $W_0, W_1, W_2, \dots$ to denote the consecutive subsets and label the resulting divisors $D = D_0, D_1, D_2, \dots$ such that $D_{i+1} = D_i - \Delta \mathbbm{1}_{W_i}$. We claim that if $W_i$ is nonempty, then $\beta_q(D_i) < \beta_q(D_{i+1})$ with respect to the lexicographic ordering. Pick some $w \in W_i$ such that $d(q, w)$ is minimized. Then $w$ has a neighbor $u$ such that $d(q, u) < d(q, w)$ and $u \notin W_i$ (note that we might have $u = q$). But this directly corresponds to $\beta_q(D_{i+1})$ being strictly greater than $\beta_q(D_i)$.

If Algorithm \ref{originaldhars} does not terminate, this means that there exists an infinite sequence of divisors $\{D_i\}_{i \geq 0}$ such that $\beta_q(D_i) < \beta_q(D_{i+1})$.  However, note that $\beta_q(D)$ is bounded for general $D$; in particular, each component is bounded between $\min\{0,D(q)\}$ and $\max\{\deg(D)+D(q), \deg(D)\}$.  Hence, such an infinite strictly increasing sequence cannot exist, so Algorithm \ref{originaldhars} must terminate.

%

We also need to check that the output of the algorithm is correct, i.e. that the entire graph burns only if the final configuration is $q$-reduced.  Given a divisor $D$ which is $q$-semi-reduced but not $q$-reduced,  there exists a subset $A$ not containing $q$ such that $D(v) \geq \outdeg_A(v)$ for all $v \in A$.  But  such a subset will not burn, so in particular the burning algorithm will find a subset $W$ to fire.  Thus, the whole graph burns only if the final divisor is $q$-reduced.

Since the iterated burning algorithm computes reduced divisors, it provides us with a polynomial time method for checking if the rank of a divisor $D$ is at least $1$: for each $v \in V(G)$ we compute $\Red_v(D)$ with the burning algorithm, then check that $\Red_v(D)(v) \geq 1$.  This in turn provides us with a way to compute the gonality of a graph: since there are finitely many effective divisors of degree at most $\abs{V(G)}$, we can simply check them one-by-one until we find the smallest one (as measured by degree) with positive rank.  In Section \ref{section:modified_dhars}, we will discuss the shortcomings of using the burning algorithm to try to compute higher gonalities of graphs and present an alternate algorithm designed for this setting.

\section{Preliminary Results}\label{section:preliminary}

We now consider the \emph{gonality sequence} of a graph $G$, the sequence of its $r^{\text{th}}$ gonalities for all $r\in\Z_{>0}$:
\[\gon_1(G),\gon_2(G),\gon_3(G),\ldots\]
We begin by providing some basic results on higher gonalities and gonality sequences, including bounds for the $r^{\text{th}}$ gonality of a graph based on lower gonalities. Some of these results were previously known, appearing in papers such as \cite{higher_gonality_random} or as widely known corollaries of the Riemann-Roch theorem for graphs; we include them here for completeness.

Our first result shows that gonality sequences are strictly increasing.

\begin{lemma}
\label{strict}
If $n > m$, then $\gon_n(G) > \gon_m(G)$.
\end{lemma}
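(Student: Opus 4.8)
The plan is to reduce to the adjacent case and then chain. First I would observe that since any divisor of rank at least $n$ automatically has rank at least $m$ whenever $n > m$, the set of divisors of rank $\geq n$ is contained in the set of divisors of rank $\geq m$; taking the minimum of degrees over these nested sets immediately yields the weak inequality $\gon_n(G) \geq \gon_m(G)$. A telescoping argument then shows it suffices to prove the strict inequality $\gon_{r+1}(G) > \gon_r(G)$ for every $r \geq 1$, since chaining gives $\gon_n(G) > \gon_{n-1}(G) > \cdots > \gon_m(G)$. I would also note that each $\gon_r(G)$ is finite and attained: by Riemann-Roch (Theorem \ref{RR}), any effective divisor of sufficiently large degree has rank at least $r$, so the minimum defining $\gon_r(G)$ is taken over a nonempty set.

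The heart of the argument is a single-chip-removal step. Let $D$ be a divisor attaining $\gon_{r+1}(G)$, so that $r(D) \geq r+1$ and $\deg(D) = \gon_{r+1}(G)$. Fix any vertex $v \in V(G)$ and set $D' = D - v$, so $\deg(D') = \deg(D) - 1$. The claim is that $r(D') \geq r$. To verify this directly against the definition of rank, take an arbitrary effective divisor $E$ of degree $r$; then $E + v$ is effective of degree $r+1$, and since $r(D) \geq r+1$ we have $\abs{D - (E + v)} \neq \emptyset$. But $D - (E + v) = D' - E$, so $\abs{D' - E} \neq \emptyset$. As $E$ was an arbitrary effective divisor of degree $r$, this shows $r(D') \geq r$.

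Combining these facts, $\gon_r(G) \leq \deg(D') = \deg(D) - 1 = \gon_{r+1}(G) - 1 < \gon_{r+1}(G)$, which establishes the adjacent strict inequality and hence the lemma.

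I do not expect a serious obstacle here; the argument is elementary once one works directly from the definition of rank. The only points requiring care are confirming that the minimum defining $\gon_r(G)$ is over a nonempty set (handled via Riemann-Roch) and correctly matching the shifted divisor $D - (E + v)$ with $D' - E$, so that the rank-$(r+1)$ hypothesis on $D$ transfers to a rank-$r$ conclusion on $D'$.
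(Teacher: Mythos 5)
Your proposal is correct and follows essentially the same route as the paper: the paper proves $r(D-E) \geq r(D) - \deg(E)$ for an arbitrary effective $E$ (using the same identity $\abs{(D-E)-E'} = \abs{D-(E+E')}$ at the heart of your argument) and subtracts a degree-$(n-m)$ divisor from a minimizer in one step, whereas you prove the single-chip case $r(D-v) \geq r(D)-1$ and chain adjacent inequalities. The difference is purely organizational, and your added remarks on finiteness of $\gon_r(G)$ are a harmless bonus.
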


\begin{proof}  
We first show that, for any given divisors $D, E \in \Div_+(G)$, 
\[
r(D - E) \geq \max \{r(D) - \deg(E), -1\}.
\]
Suppose that $r(D) - \deg(E) > -1$. (Otherwise, this inequality is clearly true.) Let $r(D) = k$. By the definition of rank, we know that $\abs{D - D'} \neq \emptyset$ for all $D' \in \Div_+^{k}(G)$. Hence, $\abs{(D - E) - E'} \neq \emptyset$ for all $E' \in \Div_+^{k - \deg(E)}(G)$ as $\abs{(D - E) - E'} = \abs{D - (E + E')}$ where $E + E' \in \Div_+^k(G)$. It follows that $r(D - E) \geq k - \deg(E) = r(D) - \deg(E)$.

Now let $\gon_n(G) = \gamma$. This means that there exists some divisor $D \in \Div_+^\gamma(G)$ such that $r(D) = n$. For any $E \in \Div_+^{n-m}(G)$, we have $r(D - E) \geq r(D) - \deg(E) = r(D) - (n-m) = m$. Since $\deg(D - E) = \gamma - (n-m) < \gamma$, we have
\[
\gon_m(G) \leq \deg(D - E) < \gon_n(G).
\]
\end{proof}

Our next result provides an upper bound on a higher gonality in terms of lower gonalities.

\begin{lemma}
\label{sum}
If $a = a_1 + \cdots + a_n$ with $a_i \in \Z_{>0}$, then $\gon_a(G) \leq \gon_{a_1}(G) + \cdots + \gon_{a_n}(G)$.
\end{lemma}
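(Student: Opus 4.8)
The plan is to reduce the statement to the \emph{super-additivity} of rank under addition of divisors: for any two divisors $D$ and $E$ with nonnegative rank, $r(D+E) \geq r(D) + r(E)$. Granting this, for each $i$ choose a divisor $D_i$ of rank at least $a_i$ and degree $\gon_{a_i}(G)$, which exists by the definition of $\gon_{a_i}(G)$; since $D_i$ has nonnegative rank we may replace it by a linearly equivalent effective divisor of the same degree and rank. Applying super-additivity $n-1$ times gives $r(D_1 + \cdots + D_n) \geq a_1 + \cdots + a_n = a$, while $\deg(D_1 + \cdots + D_n) = \gon_{a_1}(G) + \cdots + \gon_{a_n}(G)$. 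As $\gon_a(G)$ is the minimum degree over all divisors of rank at least $a$, this yields the desired inequality.

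So the heart of the argument is establishing $r(D+E)\geq r(D)+r(E)$ when $r(D)=r\geq 0$ and $r(E)=s\geq 0$. To do this I would fix an arbitrary effective divisor $F \in \Div_+^{r+s}(G)$ and show that $\abs{(D+E)-F}\neq\emptyset$; since $F$ is an arbitrary effective divisor of degree $r+s$, this shows $r(D+E)\geq r+s$. The key maneuver is to split $F$ as $F = F_1 + F_2$ with $F_1 \in \Div_+^{r}(G)$ and $F_2 \in \Div_+^{s}(G)$, which is possible because $F$ is effective and its chips can be distributed into two groups of the prescribed sizes. By definition of rank, $\abs{D - F_1}\neq\emptyset$ and $\abs{E-F_2}\neq\emptyset$, so there exist effective divisors $G_1 \sim D-F_1$ and $G_2 \sim E - F_2$. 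Then $G_1 + G_2$ is effective and linearly equivalent to $(D+E)-F$, which gives $\abs{(D+E)-F}\neq\emptyset$.

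I expect the main (though mild) obstacle to be handling the bookkeeping cleanly rather than any deep difficulty: one must check that the splitting of $F$ into prescribed-degree effective pieces is always possible, and that replacing the degree-minimizing divisors of rank at least $a_i$ by linearly equivalent effective representatives changes neither the degree nor the rank (both of which are invariant under linear equivalence). Once super-additivity is in hand, the passage to the $n$-fold sum is a routine induction on $n$, and the final inequality $\gon_a(G)\leq \gon_{a_1}(G)+\cdots+\gon_{a_n}(G)$ follows immediately from the definition of $\gon_a(G)$.
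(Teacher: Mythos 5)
Your proposal is correct and is essentially the paper's own argument: the paper likewise takes degree-minimizing divisors $D_{a_i}$ of rank $a_i$, splits an arbitrary effective divisor $E$ of degree $a$ into effective pieces $E_{a_i}$ of degrees $a_i$, and sums effective representatives of the $D_{a_i}-E_{a_i}$ to conclude $\abs{D-E}\neq\emptyset$. The only difference is organizational: you package the key step as a two-divisor super-additivity statement $r(D+E)\geq r(D)+r(E)$ and induct on $n$, whereas the paper carries out the $n$-fold decomposition in a single step.
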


\begin{proof}
Let $\gamma_{a_i} = \gon_{a_i}(G)$ for each $i$. There exists a divisor $D_{a_i} \in \Div_+^{\gamma_{a_i}}(G)$ such that $r\left(D_{a_i}\right) = a_i$ for each $i$. This implies that $\abs{D_{a_i} - E} \neq \emptyset$ for all $E \in \Div_+^{a_i}(G)$. Let $D = D_{a_1} + \cdots + D_{a_n}$. Notice that for each $E \in \Div_+^a(G)$, there exists some decomposition $E = E_{a_1} + \cdots + E_{a_n}$ where $E_{a_i} \in \Div_+^{a_i}(G)$. 

Pick $F_i \in \Div_+(G)$ for $i \in \{1,\dots,n\}$.  Since \(F_i>0\), we have \(\sum_{i=1}^n F_i>0\), meaning $\abs{ \sum_{i=1}^n F_i } \neq \emptyset$.  
As a consequence, we see that
\[
\abs{D - E} = \abs{ \sum_{i=1}^n D_{a_i} - \sum_{i=1}^n E_{a_i} } = \abs{\sum_{i=1}^n \paren{D_{a_i} - E_{a_i} } } \neq \emptyset,
\]
because for each $i$, we have $\abs{D_{a_i} - E_{a_i}} \neq \emptyset$. We conclude that $r(D) \geq a$ which implies that $\gon_a(G) \leq \gon_{a_1}(G) + \cdots + \gon_{a_n}(G)$. 
\end{proof}

We now prove a straightforward lower bound on the $k^{\text{th}}$ gonality of a graph.

\begin{proposition}
\label{tree}
 For a graph $G$ and $k \in \Z_{>0}$, we have $\gon_k(G) \geq k$, with equality if and only if $G$ is a tree.
\end{proposition}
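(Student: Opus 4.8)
The plan is to establish the inequality $\gon_k(G)\geq k$ directly from the definition of rank, and then to handle both directions of the equality case using the Riemann--Roch theorem (Theorem \ref{RR}) together with the graph-theoretic Clifford theorem stated above. Throughout I take $G$ to be connected, so that ``tree'' is synonymous with ``connected graph of genus $0$'' and Riemann--Roch applies. For the lower bound, suppose $r(D)\geq k$ and fix any vertex $v$, so that $E:=k\cdot v$ is an effective divisor of degree $k$. By the definition of rank, $\abs{D-E}\neq\emptyset$, and since every effective divisor has nonnegative degree this forces $\deg(D)-k\geq 0$, i.e. $\deg(D)\geq k$. As this holds for every divisor of rank at least $k$, I get $\gon_k(G)\geq k$.

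For the forward direction of the equality, I would suppose $G$ is a tree, so $g(G)=0$ and $\deg(K)=2g-2=-2$. Let $D$ be any effective divisor of degree $k$. Then $\deg(K-D)=-2-k<0$, so $\abs{K-D}=\emptyset$ and $r(K-D)=-1$. Riemann--Roch then gives $r(D)=\deg(D)+1-g+r(K-D)=k+1-0-1=k$, so $D$ has rank exactly $k$ and hence $\gon_k(G)\leq k$. Combined with the lower bound this yields $\gon_k(G)=k$.

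For the converse, suppose $\gon_k(G)=k$ and let $D$ be a divisor of degree $k$ with $r(D)=k$. Since $r(D)\geq 0$ we have $\abs{D}\neq\emptyset$, so after replacing $D$ by a linearly equivalent effective divisor (which preserves both degree and rank) I may assume $D$ is effective. If $D$ were special, i.e. $r(K-D)\geq 0$, then Clifford's theorem would give $r(D)\leq\tfrac{1}{2}\deg(D)=k/2<k$, contradicting $r(D)=k$ since $k\geq 1$. Hence $D$ is non-special and $r(K-D)=-1$, and Riemann--Roch gives $k-(-1)=\deg(D)+1-g=k+1-g$, forcing $g=0$. A connected graph of genus $0$ is a tree, completing the argument.

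The only genuinely delicate point is the converse direction: the tempting move is to argue purely combinatorially that all single-chip divisors must be equivalent (via the spanning-tree order of the Jacobian), but the cleaner route is to force $D$ to be non-special with Clifford's theorem and then read off $g=0$ from Riemann--Roch. The two things I would be careful about are passing to an effective representative of $D$ before invoking Clifford's theorem, and keeping the standing assumption that $G$ is connected, so that genus $0$ really does characterize trees. As an alternative to the forward direction, one could instead note that a tree has $\gon_1(G)=1$ and apply Lemma \ref{sum} with $a_1=\cdots=a_k=1$ to get $\gon_k(G)\leq k$, but the Riemann--Roch computation is self-contained and avoids separately proving $\gon_1=1$ for trees.
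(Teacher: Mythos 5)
Your proof is correct, but it takes a genuinely different route from the paper's. The paper keeps the equality case entirely inside the elementary chip-firing framework of Section \ref{section:preliminary}: for the forward direction it cites the fact that $\gon_1(G)=1$ if and only if $G$ is a tree (from \cite[Lemma 1.1]{bn2}) and applies Lemma \ref{sum} to get $\gon_k(G)\leq k\gon_1(G)=k$; for the converse it invokes Lemma \ref{strict}, observing that $\gon_1(G),\dots,\gon_{k-1}(G),k$ is a strictly increasing sequence of positive integers ending in $k$, hence must be $1,2,\dots,k$, forcing $\gon_1(G)=1$ and thus $G$ a tree. You instead run both directions through Riemann--Roch: on a tree $g=0$ makes every degree-$k$ effective divisor non-special with rank exactly $k$, and conversely an effective rank-$k$, degree-$k$ divisor cannot be special by Clifford's theorem (since $k\leq k/2$ fails for $k\geq 1$), so Riemann--Roch forces $g=0$. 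Your approach leans on heavier machinery but is more self-contained, avoiding both preliminary lemmas and the external citation for the tree characterization of $\gon_1=1$; the paper's approach is more elementary and reuses results it needs anyway, which keeps the preliminary section tightly knit. One small polish: the definition of $\gon_k$ only guarantees a divisor with $r(D)\geq k$, not $r(D)=k$, but your argument survives verbatim --- Clifford still gives the contradiction $r(D)\leq k/2 < k \leq r(D)$, and Riemann--Roch yields $g=k-r(D)\leq 0$, hence $g=0$ since the genus is nonnegative. Your care in passing to an effective representative before applying Clifford, and in assuming connectivity so that genus $0$ characterizes trees, is exactly right.
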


\begin{proof} 
Suppose we have a divisor $D \in \Div_+(G)$ with $\deg(D) < k$. Then, for all $D' \in \Div_+^k(G)$, we have $|D - D'| = \emptyset$ because $\deg(D - D') < 0$ and the degree of a divisor is preserved under linear equivalence. Hence, $r(D) < k$.  This shows that $\gon_k(G) \geq k$.

Now we show $\gon_k(G)=k$ if and only if $G$ is a tree. First we remark that $\gon_1(G)=1$ if and only if $G$ is a tree; this follows immediately from \cite[Lemma 1.1]{bn2}.  Assume that $G$ is a tree. Then $\gon_k(G)\leq k\gon_1(G)=k$ by Lemma \ref{sum}.  Combined with our lower bound of $k$, we have that $\gon_k(G)=k$.

Now assume $\gon_k(G)=k$.  Then the first $k$ terms of the gonality sequence of $G$ are
\[\gon_1(G),\gon_2(G),\cdots,\gon_{k-1}(G),k.\]
By Lemma \ref{strict}, this is a strictly increasing sequence of $k$ positive integers ending in $k$.  The only such sequence is
\[1,2,\cdots,k-1,k,\]
meaning that $\gon_1(G)=1$.  We conclude that $G$ is a tree.
\end{proof}

We now present a few well-known corollaries of Theorem \ref{RR}, the Riemann-Roch theorem for graphs.

\begin{lemma}\label{genus}  Let $G$ be a graph of genus $g$.
\begin{itemize}
    \item[(a)] We have $\gon_1(G)\leq g+1$.
    \item[(b)] 
    If $g\geq 2$, then $\gon_{g-1}(G)=2g-2$.
    \item[(c)] If $k \geq g$, then $\gon_k(G) = g + k$.
\end{itemize}
\end{lemma}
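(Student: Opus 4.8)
The plan is to prove all three parts as essentially direct consequences of the Riemann--Roch theorem (Theorem \ref{RR}), using the fact that the canonical divisor $K$ has degree $2g-2$, which follows since $\deg(K)=\sum_v(\val(v)-2)=2\abs{E(G)}-2\abs{V(G)}=2(g-1)$.

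For part (a), I would find a divisor of rank at least $1$ and degree at most $g+1$. The cleanest route is to take any effective divisor $D$ of degree $g+1$ and apply Riemann--Roch: since $\deg(K-D)=2g-2-(g+1)=g-3<\deg(K)$ forces nothing directly, instead I note $r(D)-r(K-D)=\deg(D)+1-g=2$, so $r(D)\geq 2-r(K-D)$. If $K-D$ is not effective after reduction then $r(K-D)=-1$ and $r(D)\geq 1$; more carefully, any divisor of degree $g+1$ has $r(D)\geq \deg(D)-g=1$ because $r(K-D)\leq \deg(K-D)=g-3$ can be large, so this needs the sharper observation that we may simply invoke $r(D)\geq\deg(D)+1-g$ whenever $r(K-D)\geq -1$, giving $r(D)\geq 1$ for some effective $D$ of degree $g+1$; hence $\gon_1(G)\leq g+1$.

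For part (b), with $g\geq 2$, the target is $\gon_{g-1}(G)=2g-2$. I would use the canonical divisor itself: applying Riemann--Roch to $D=K$ gives $r(K)-r(0)=\deg(K)+1-g=g-1$, and since $r(0)=0$ we get $r(K)=g-1$, so $\gon_{g-1}(G)\leq\deg(K)=2g-2$. For the matching lower bound, I would invoke Clifford's theorem (the Baker--Norine version stated in the excerpt): any effective special divisor satisfies $r(D)\leq\tfrac12\deg(D)$. A divisor of rank $g-1$ on a genus $g$ graph is special (one checks $r(K-D)\geq 0$ via Riemann--Roch when $\deg(D)\leq 2g-2$), so $g-1=r(D)\leq\tfrac12\deg(D)$ forces $\deg(D)\geq 2g-2$. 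Combining the two bounds yields equality.

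For part (c), when $k\geq g$, I claim $\gon_k(G)=g+k$. The upper bound comes from Riemann--Roch: any effective divisor $D$ of degree $g+k$ has $\deg(K-D)=2g-2-(g+k)=g-k-2<0$ when $k\geq g-1$, so $\abs{K-D}=\emptyset$, giving $r(K-D)=-1$ and thus $r(D)=\deg(D)+1-g+r(K-D)=(g+k)+1-g-1=k$; hence $\gon_k(G)\leq g+k$. For the lower bound, any divisor of rank exactly $k$ must have $r(K-D)\geq -1$, so $\deg(D)=r(D)-r(K-D)+g-1\geq k-0+g-1$; but more directly, once $\deg(D)<g+k$ with $k\geq g$ forces $\deg(D)<2g\leq g+k$, and Riemann--Roch gives $r(D)=\deg(D)+1-g+r(K-D)$ where $r(K-D)=-1$ once $\deg(D)>2g-2$, so $r(D)=\deg(D)-g<k$. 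The main obstacle I anticipate is getting the boundary cases of $k$ relative to $g$ exactly right in part (c) (ensuring $\deg(K-D)<0$ so that $r(K-D)=-1$ cleanly), and confirming speciality in part (b); these are the places where an off-by-one in the genus bookkeeping could slip in, so I would track $\deg(K)=2g-2$ with care throughout.
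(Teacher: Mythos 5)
Your parts (a) and (b) are sound. Part (a) is essentially the paper's argument, though your intermediate inequality should read $r(D)\geq\deg(D)-g$ rather than $r(D)\geq\deg(D)+1-g$ (since all you know is $r(K-D)\geq-1$); the conclusion $r(D)\geq 1$ for $\deg(D)=g+1$ is unaffected. In part (b) your upper bound via $r(K)=g-1$ matches the paper, but your lower bound takes a genuinely different route: you invoke the Baker--Norine Clifford theorem for effective special divisors, whereas the paper supposes a divisor of degree $2g-3$ and rank at least $g-1$ exists and uses Riemann--Roch to manufacture a positive-rank divisor of degree $1$, which would force $G$ to be a tree, contradicting $g\geq 2$. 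Your Clifford route is valid (after replacing $D$ by an effective representative, which exists since $r(D)\geq g-1\geq 1$) and is arguably cleaner, since it handles all degrees up to $2g-2$ at once instead of reducing to degree exactly $2g-3$.

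The genuine gap is in the lower bound of part (c). Your ``more direct'' argument only covers divisors with $2g-2<\deg(D)<g+k$: in that range $\deg(K-D)<0$, so $r(K-D)=-1$ and $r(D)=\deg(D)-g<k$. It says nothing about divisors with $\deg(D)\leq 2g-2$, and ruling out rank $\geq k$ there is precisely the nontrivial part. Your other inequality, $\deg(D)=r(D)-r(K-D)+g-1\geq k-0+g-1$, silently assumes $r(K-D)\leq 0$, which is exactly what is unknown in that range; and even if granted it yields only $\deg(D)\geq g+k-1$, one short of what is needed. The missing ingredient, which is the paper's key step, is that a divisor of nonnegative rank has degree at least its rank. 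Concretely: if $r(D)\geq k$ and $\deg(D)=g+k-\ell$ with $\ell\geq 1$, Riemann--Roch gives $r(K-D)\geq k-(g+k-\ell+1-g)=\ell-1\geq 0$, hence $\deg(K-D)=g-2-k+\ell\geq\ell-1$, which forces $k\leq g-1$, contradicting $k\geq g$. (Alternatively, in the range $\deg(D)\leq 2g-2$ you could reuse your part (b) strategy: such a $D$ would be special, so Clifford gives $k\leq r(D)\leq\frac{1}{2}\deg(D)\leq g-1<k$.) As written, the case $\deg(D)\leq 2g-2$ is simply unaddressed, so your proof of part (c) is incomplete.
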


\begin{proof}

To prove part (a), let $D$ be any divisor of degree $g+1$ on $G$.  By the Riemann-Roch theorem, we have
\[r(D)=\deg(D)+1-g+r(K-D)= 2+r(K-D)\geq 1\]
since $r(K-D)\geq -1$.  As $D$ has positive rank and degree $g+1$, we have $\gon_1(G)\leq g+1$.  (We remark that this is essentially the proof of part 1 of \cite[Theorem 1.9]{bn}.)


For part (b), note that $\deg(K)=2g-2$, and $r(K)=\deg(K)-g+1+r(K-K)=2g-2-g+1+0=g-1$.  Thus $\gon_{g-1}(G)\leq 2g-2$.  Suppose for the sake of contradiction that there exists a divisor $D$ of degree $2g-3$ and rank $g-1$ or more.  Then $r(K-D)=r(D)+g-\deg(D)-1\geq g-1+g-(2g-3)-1=1$, and $\deg(K-D)=1$.  The only way $G$ can have a positive rank divisor of degree $1$ is for $G$ to be a tree, but $g\geq 1$, a contradiction.  Thus $\gon_{g-1}(G)= 2g-2$.

We now prove part (c).  Let $k \geq g$. Choose a divisor $D \in \Div_+^{g+k}(G)$. By the Riemann-Roch theorem for graphs, we know that
\begin{align*}
r(D) &= \deg(D) + 1 - g + r(K - D) \\
&= k + 1 + r(K - D) \geq k. 
\end{align*}
Hence, $\gon_k(G) \leq g + k$. Now, suppose that we have a divisor $D \in \Div_+^{g+k-\ell}(G)$ with $\ell \geq 1$ such that $r(D) \geq k$. Then, by Riemann-Roch, we see that
\begin{align*}
r(K - D) &= r(D) - (\deg(D) + 1 - g) \\
&\geq k - (g +k-\ell + 1 - g) \\
&= \ell - 1.
\end{align*}
Since $r(K - D) \geq \ell - 1$, we know that $\deg(K - D) \geq \ell - 1$. Hence,
\begin{align*}
\deg(K - D) = 2g - 2 - (g+k-\ell) &\geq \ell - 1,
\end{align*}
which implies $g - 1 \geq k$, contradicting our original assumption that $k \geq g$. We conclude that $\gon_k(G) = g + k$.
\end{proof}

This lemma helps us prove the following corollary.

\begin{corollary}\label{corollary:determined_genus}
The genus of a graph is determined by its gonality sequence.
\end{corollary}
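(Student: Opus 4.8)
The plan is to read off $g$ from the tail behavior of the sequence, using Lemma \ref{genus}(c). That part tells us that once $k \geq g$ we have $\gon_k(G) = g + k$, so the shifted sequence $\gon_k(G) - k$ is eventually constant and equal to $g$. The only point requiring care is circularity: I want to characterize $g$ purely in terms of the sequence, without already knowing the threshold $g$ beyond which the formula of Lemma \ref{genus}(c) applies.

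To do this, I would first observe that $\gon_k(G) - k$ is non-decreasing in $k$. Indeed, by Lemma \ref{strict} the gonalities are strictly increasing integers, so $\gon_{k+1}(G) \geq \gon_k(G) + 1$, which rearranges to $\gon_{k+1}(G) - (k+1) \geq \gon_k(G) - k$. Combining this with Lemma \ref{genus}(c), which pins $\gon_k(G) - k$ to the constant value $g$ for all $k \geq g$, the monotonicity forces $\gon_k(G) - k \leq g$ for every $k$, with equality precisely when $k \geq g$.

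Hence $g = \max_k \bigl(\gon_k(G) - k\bigr) = \lim_{k \to \infty}\bigl(\gon_k(G) - k\bigr)$, and the right-hand side is manifestly a function of the gonality sequence alone. Two graphs sharing a gonality sequence therefore produce the same value of this maximum, so they have the same genus, which is exactly the claim. I expect the argument to be short; the only step needing attention is the monotonicity observation, which is what removes the dependence on a priori knowledge of $g$. Once that is in place, extracting $g$ as the stable (equivalently, maximal) value of $\gon_k(G) - k$ is immediate.
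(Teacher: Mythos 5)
Your proof is correct, but it takes a genuinely different route from the paper's. The paper locates the genus inside the sequence: using parts (b) and (c) of Lemma \ref{genus}, it observes that for $g \geq 2$ one has $\gon_g(G) = \gon_{g-1}(G) + 2$ while $\gon_k(G) = \gon_{k-1}(G)+1$ for all $k > g$, so the genus is the index of the last gap of size at least $2$; genus $0$ and genus $1$ produce no such gap and are handled as separate cases via Proposition \ref{tree} and parts (a) and (c) of Lemma \ref{genus}. You instead extract $g$ as the asymptotic value $\lim_{k\to\infty}\bigl(\gon_k(G)-k\bigr)$, which needs only Lemma \ref{genus}(c), plus Lemma \ref{strict} for the max formulation, and which treats all genera uniformly with no case analysis --- arguably cleaner. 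Your worry about circularity is in fact unnecessary: part (c) alone says $\gon_k(G)-k$ is eventually constant equal to $g$, so the limit exists and is manifestly a function of the sequence; the lemma merely identifies its value. One small overclaim to flag: ``equality precisely when $k \geq g$'' does not follow from monotonicity plus (c) --- ruling out equality for $k < g$ would require part (b), e.g.\ $\gon_{g-1}(G)-(g-1)=g-1$ --- but you never use the ``precisely,'' since $\max_k\bigl(\gon_k(G)-k\bigr) = g$ only needs the bound $\leq g$ everywhere together with equality for $k \geq g$, both of which you establish. What the paper's longer route buys is the explicit description of the genus-$0$ and genus-$1$ sequences and the ``last jump of size two'' characterization, which it reuses later (the proof of Theorem \ref{theorem:gonality_sequences} cites the genus-$1$ case directly); your route buys brevity and uniformity.
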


\begin{proof}
A graph has genus zero if and only if its gonality sequence is $1,2,3,4,\ldots$ by Proposition \ref{tree}.  Moreover, we claim that a graph has genus $1$ if and only if its gonality sequence is $2,3,4,5,\ldots$. The forward direction  follows from Lemma \ref{genus}(c) with \(g=1\).  For the backward direction, any graph with genus $g\geq 2$ must have $\gon_g(G) = \gon_{g-1}(G) + 2$ by parts (b) and (c) of Lemma 3.4.

Now suppose $g\geq 2$.  By Lemma \ref{genus}, we know that $\gon_g(G) = \gon_{g-1}(G) + 2$ and $\gon_k(G) = \gon_{k-1}(G) + 1$ for all $k > g$. Thus, the genus of $G$ is the index of the last gonality that is more than one greater than the preceding gonality. We conclude that the genus of a graph is determined by its gonality sequence.
\end{proof}

Our next result says that if a graph has first gonality $2$, then its gonality sequence is determined by its genus.

\begin{proposition}\label{prop:hyperelliptic}
If $\gon_1(G) = 2$, then
\[
\gon_k(G) = \begin{cases}
2k \quad &\text{if $k < g(G)$} \\
k + g(G) \quad &\text{if $k \geq g(G)$}.
\end{cases}
\]
\end{proposition}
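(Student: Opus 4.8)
The plan is to prove the proposition by combining the case $k \geq g(G)$, which is already handled, with a careful analysis of the regime $k < g(G)$. For $k \geq g(G)$, part (c) of Lemma \ref{genus} immediately gives $\gon_k(G) = g(G) + k$, so the only real work lies in showing $\gon_k(G) = 2k$ whenever $1 \leq k < g(G)$. I would establish this in two halves: first the upper bound $\gon_k(G) \leq 2k$, and then the matching lower bound $\gon_k(G) \geq 2k$.

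For the upper bound, I would exploit the hypothesis $\gon_1(G) = 2$ together with the subadditivity of Lemma \ref{sum}. Since there is a divisor $D$ of degree $2$ and rank $1$, writing $k = 1 + 1 + \cdots + 1$ ($k$ times) and applying Lemma \ref{sum} yields $\gon_k(G) \leq k \cdot \gon_1(G) = 2k$. This is clean and requires no case distinction on $k$ relative to $g$. Concretely, the divisor $kD$ (i.e., $k$ copies of the rank-$1$, degree-$2$ divisor) has degree $2k$ and rank at least $k$ by the argument embedded in the proof of Lemma \ref{sum}.

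The lower bound $\gon_k(G) \geq 2k$ for $k < g(G)$ is where I expect the main obstacle to lie, and here I would appeal to Clifford's theorem for graphs as stated in the excerpt. Suppose $D$ is an effective divisor with $r(D) = k$ and $\deg(D) < 2k$; I want a contradiction. The key point is that $D$ should be \emph{special}, meaning $r(K - D) \geq 0$, so that Clifford's bound $r(D) \leq \tfrac{1}{2}\deg(D)$ applies and forces $k \leq \tfrac{1}{2}\deg(D) < k$, an immediate contradiction. So the crux is verifying specialness. I would run Riemann-Roch: $r(K - D) = r(D) - \deg(D) - 1 + g = k - \deg(D) - 1 + g$. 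Since $\deg(D) \leq 2k - 1$ (as we assumed $\deg(D) < 2k$), this is at least $k - (2k-1) - 1 + g = g - k \geq 1 > 0$ using $k < g$, so indeed $r(K-D) \geq 0$ and $D$ is special. Applying Clifford then gives the contradiction, establishing $\gon_k(G) \geq 2k$.

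The one subtlety I would take care with is the effectivity assumption needed to invoke Clifford: the theorem as stated requires $D$ to be effective and special. Since $\gon_k(G)$ is realized by a divisor of rank exactly $k \geq 1 > -1$, any minimal-degree rank-$k$ divisor has nonempty linear system and may be replaced by an equivalent effective divisor without changing its degree or rank (rank and degree are linear-equivalence invariants, as noted in the excerpt). Thus I would phrase the lower-bound argument as: let $D$ be effective with $r(D) \geq k$ and $\deg(D) = \gon_k(G)$; the specialness computation and Clifford together force $\deg(D) \geq 2k$. Combining the two bounds gives $\gon_k(G) = 2k$ for $1 \leq k < g$, which together with part (c) of Lemma \ref{genus} completes the proof.
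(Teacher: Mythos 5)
Your proof is correct, but it reaches the crucial lower bound by a genuinely different route than the paper. The paper never proves $\gon_k(G)\geq 2k$ directly for each $k$: it combines the same subadditivity idea you use (Lemma \ref{sum}, giving $\gon_k(G)\leq \gon_{k-1}(G)+2$) with the Riemann--Roch anchor $\gon_{g(G)}(G)=2g(G)$ from part (c) of Lemma \ref{genus}, and then squeezes: the sequence must climb from $\gon_1(G)=2$ to $2g(G)$ in $g(G)-1$ steps each of size at most $2$, so every step is exactly $2$ and $\gon_k(G)=2k$ throughout. You instead prove the lower bound for each $k<g(G)$ independently, via the Baker--Norine Clifford theorem stated after Theorem \ref{RR}: a rank-$k$ divisor of degree at most $2k-1$ is special, since Riemann--Roch gives $r(K-D)\geq k-(2k-1)-1+g = g-k\geq 1$, and then $r(D)\leq\frac{1}{2}\deg(D)$ yields an immediate contradiction; your care about effectivity (replacing $D$ by an equivalent effective divisor, legitimate since rank and degree are equivalence invariants and $r(D)\geq 0$ forces $\abs{D}\neq\emptyset$) is exactly the hypothesis Clifford's theorem needs. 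The trade-off is this: the paper's squeeze is shorter and uses nothing beyond Riemann--Roch corollaries plus subadditivity, but it is indirect, working only because the chain of inequalities is tight end-to-end. Your argument isolates a strictly stronger intermediate fact, namely that $\gon_k(G)\geq 2k$ for $k<g(G)$ holds for \emph{every} graph, hyperelliptic or not, so the hypothesis $\gon_1(G)=2$ enters only through the upper bound. That added generality and transparency is worth something, and it is consonant with how the paper itself deploys Clifford-type results (Theorem \ref{coppensclif}, Corollary \ref{corollary:g-2}) elsewhere in Section \ref{section:small_genus}.
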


\begin{proof}
By part (c) of Lemma \ref{genus}, we know $\gon_k(G) = k + g(G)$ for $k \geq g(G)$. By Lemma \ref{sum}, we know that $\gon_k(G) \leq \gon_{k-1}(G) + 2$ for $2 \leq k \leq g(G)$. Since $\gon_{g(G)}(G) = 2g(G)$, this implies that $\gon_k(G) = \gon_{k-1}(G) + 2$ for $2 \leq k \leq g(G)$ so $\gon_k(G) = 2k$. 
\end{proof}

We will refer to graphs of gonality $2$ as \emph{hyperelliptic} graphs, coming from the terminology used for algebraic curves.  (Note that for algebraic curves, and in some work on divisor theory on graphs \cite{bn2}, ``hyperelliptic'' also requires \(g\geq 2\); we allow \(g=1\).) For such a graph of genus $g$, we will call its gonality sequence the hyperelliptic gonality sequence of genus $g$. We close this section with a corollary of Theorem \ref{coppensclif}.

\begin{corollary}\label{corollary:g-2}
Let $G$ be a graph of genus $g\geq 4$ with $\gon_1(G)\geq 3$.  We have $\gon_{g-2}(G)=2g-3$.
\end{corollary}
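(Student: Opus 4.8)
The plan is to prove $\gon_{g-2}(G) = 2g-3$ by establishing matching upper and lower bounds, with the two versions of Clifford's theorem doing the heavy lifting on the lower bound. For the upper bound I would simply combine strict monotonicity with the already-computed value of $\gon_{g-1}$: since $g \geq 4 \geq 2$, part (b) of Lemma \ref{genus} gives $\gon_{g-1}(G) = 2g-2$, and since $g-2 < g-1$, Lemma \ref{strict} yields $\gon_{g-2}(G) < \gon_{g-1}(G) = 2g-2$, i.e.\ $\gon_{g-2}(G) \leq 2g-3$.

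The substance lies in the lower bound $\gon_{g-2}(G) \geq 2g-3$, which I would prove by contradiction. Suppose there is a divisor $D$ with $r(D) \geq g-2$ and $\deg(D) \leq 2g-4$. Because $r(D) \geq 0$ the linear system $\abs{D}$ is nonempty, so I may replace $D$ by an effective linearly equivalent representative without changing its rank or degree, and assume $D$ is effective. I would then verify that $D$ is special: by Theorem \ref{RR}, $r(K-D) = r(D) + g - 1 - \deg(D) \geq (g-2) + g - 1 - (2g-4) = 1 \geq 0$. Applying the Baker--Norine version of Clifford's theorem to the effective special divisor $D$ gives $g-2 \leq r(D) \leq \frac{1}{2}\deg(D) \leq \frac{1}{2}(2g-4) = g-2$, so every inequality is forced to be an equality; in particular $\deg(D) = 2g-4$ and $r(D) = g-2$ exactly.

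It then remains only to exclude this boundary configuration, which is precisely where Theorem \ref{coppensclif} enters. Setting $r = g-2$, I would observe that $\deg(D) = 2g-4 = 2r$ and that the hypothesis $g \geq 4$ guarantees $2 \leq r \leq g-2$. Coppens' theorem then produces a divisor of rank $1$ and degree $2$ on $G$, which forces $\gon_1(G) \leq 2$ and contradicts the standing assumption $\gon_1(G) \geq 3$. This contradiction establishes $\gon_{g-2}(G) \geq 2g-3$, and combined with the upper bound completes the proof.

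The main obstacle is the exact boundary value $\deg(D) = 2g-4$: the Baker--Norine form of Clifford's inequality alone only bounds the degree below by $2g-4$ and cannot rule out equality, so the argument genuinely requires the sharper structural statement of Theorem \ref{coppensclif}, whose hypotheses $2 \leq r \leq g-2$ are exactly what makes the assumption $g \geq 4$ essential. In writing the details I would also double-check two small points: that the effective representative is legitimately available (guaranteed since $r(D) \geq g-2 \geq 2 > -1$), and that the case $r(D) > g-2$ cannot arise under $\deg(D) \leq 2g-4$, since $r(D) \geq g-1$ would force $\deg(D) \geq \gon_{g-1}(G) = 2g-2 > 2g-4$.
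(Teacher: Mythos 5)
Your proposal is correct and takes essentially the same route as the paper: the upper bound comes from $\gon_{g-1}(G)=2g-2$ (part (b) of Lemma \ref{genus}) together with strict monotonicity, and the lower bound is a proof by contradiction whose punchline is Theorem \ref{coppensclif} applied with $r=g-2$ against the hypothesis $\gon_1(G)\geq 3$. The only difference is one of rigor rather than substance: where the paper simply asserts that $\gon_{g-2}(G)\leq 2g-4$ yields a divisor of degree exactly $2g-4$ and rank exactly $g-2$, you justify this boundary reduction via Riemann--Roch and the Baker--Norine Clifford inequality, a more careful rendering of the same step.
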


\begin{proof}
By part (b) of Lemma \ref{genus}, we know $\gon_{g-1}(G)=2g-2$, which is strictly larger than $\gon_{g-2}(G)$, so $\gon_{g-2}(G)\leq 2g-3$.  Suppose $\gon_{g-2}(G)\leq 2g-4$. Then there exists a divisor of degree $2g-4$ and rank $g-2$.  Since $g\geq 4$, we have $2\leq g-2$, so by Theorem \ref{coppensclif} it follows that $G$ has a divisor of rank $1$ and degree $2$, a contradiction to $\gon_1(G)\geq 3$.  We conclude that $\gon_{g-2}(G)=2g-3$.
\end{proof}

\section{Gonality Sequences for Graphs of Small Genus}\label{section:small_genus}

In this section, we will determine all possible gonality sequences for graphs of low genus. Moreover, we will show that for low genus, the genus of the graph combined with first gonality is enough information to determine the entire gonality sequence.

We first present the following lemma, which handles the $g\geq 6$ part of Theorem \ref{theorem:gonality_sequences}.

\begin{lemma}\label{lemma:G_and_H}
For $g\geq 6$, there exist graphs $G$ and $H$ of genus $g$ with $\gon_1(G)=\gon_1(H)=3$, $\gon_2(G)=5$, and $\gon_2(H)=6$.
\end{lemma}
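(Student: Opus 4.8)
The plan is to exhibit explicit families of graphs and verify their first two gonalities directly. Since Theorem \ref{theorem:gonality_sequences} claims that for $g\geq 6$ the gonality sequence is \emph{not} determined by $g$ and $\gon_1$, the role of this lemma is to produce two genus-$g$ graphs $G$ and $H$ that agree on genus and first gonality ($=3$) but differ on second gonality ($5$ versus $6$). I would look for constructions that are flexible enough to hit any genus $g\geq 6$ while keeping the gonality computations tractable.

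First I would construct $G$, the graph with $\gon_2(G)=5$. A natural candidate is a graph built so that a low-degree divisor of rank $2$ appears easily; for instance, one can take a small ``core'' graph known to have a degree-$5$ rank-$2$ divisor and then inflate its genus up to $g$ by adding parallel edges or small handles in a way that raises the genus without decreasing the first gonality below $3$ or forcing the second gonality higher. To pin down $\gon_1(G)=3$, I would combine the lower bound from Lemma \ref{lemma:lower_bound} (edge-connectivity) or Proposition \ref{tree} with an explicit degree-$3$ positive-rank divisor, together with the structural fact that $\gon_1\geq 3$ rules out the hyperelliptic sequence via Proposition \ref{prop:hyperelliptic}. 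For the upper bound $\gon_2(G)\leq 5$ I would simply display a rank-$2$ divisor of degree $5$; for the matching lower bound $\gon_2(G)\geq 5$, I would invoke Lemma \ref{strict}, which gives $\gon_2(G)>\gon_1(G)=3$, hence $\gon_2(G)\geq 4$, and then rule out $\gon_2(G)=4$ separately, perhaps using Clifford-type constraints (Theorem \ref{coppensclif}) or a direct Dhar's-burning argument showing no degree-$4$ divisor has rank $2$.

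For $H$, the graph with $\gon_2(H)=6$, I would want a graph where rank-$2$ behavior is ``expensive.'' Here the cleanest tool is Lemma \ref{sum}: it gives $\gon_2(H)\leq 2\gon_1(H)=6$, so the upper bound is automatic once $\gon_1(H)=3$ is established. The substantive work is the lower bound $\gon_2(H)\geq 6$, i.e.\ showing that no divisor of degree $5$ has rank $2$. I would pick $H$ from a family where degree-$5$ divisors can be analyzed exhaustively --- for example a graph with a bottleneck or a vertex of small valence forcing reduced divisors to concentrate --- and run an argument using $v$-reduced divisors: for each vertex $v$, compute $\mathrm{Red}_v(D)$ via Dhar's burning algorithm and check that rank $2$ cannot be achieved on only $5$ chips. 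Again $\gon_1(H)=3$ follows from an explicit degree-$3$ rank-$1$ divisor plus an edge-connectivity or reducedness lower bound that excludes gonality $1$ and $2$.

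The main obstacle I expect is the lower bound $\gon_2(H)\geq 6$: ruling out \emph{all} degree-$5$ divisors of rank $2$ requires controlling infinitely many equivalence classes, and a brute-force check over effective representatives is only valid after reducing to $v$-reduced divisors and bounding their support. I would address this by choosing $H$ with enough structural rigidity (a separating edge set, or a symmetric template scaled to genus $g$) that the $v$-reduced divisors fall into a small number of shapes, each of which can be dismissed by hand. A secondary difficulty is making \emph{both} constructions work uniformly for all $g\geq 6$ rather than case-by-case; I would handle this by designing each graph as a fixed gadget together with a genus-inflation operation (adding parallel edges or pendant cycles at a chosen vertex) whose effect on $\gon_1$ and $\gon_2$ I can track explicitly, verifying that the genus increases by exactly the desired amount while the first two gonalities stay fixed.
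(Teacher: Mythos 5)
Your proposal correctly identifies the right verification tools---an explicit degree-$5$ rank-$2$ divisor plus Theorem \ref{coppensclif} to force $\gon_2(G)=5$, and Lemma \ref{sum} to get $\gon_2(H)\leq 6$---and these are exactly the tools the paper uses. But there is a genuine gap: you never actually construct $G$ and $H$. The lemma is an existence statement whose entire content is the exhibition of concrete graphs for every $g\geq 6$ together with gonality certificates that work uniformly in $g$; your text defers this to ``a small core graph known to have a degree-$5$ rank-$2$ divisor,'' ``a family where degree-$5$ divisors can be analyzed exhaustively,'' and a ``genus-inflation operation'' whose effect on gonality you say you would ``track explicitly.'' That tracking is precisely the hard part, and the specific mechanisms you suggest are not sound as stated: adding parallel edges or pendant cycles does not in general preserve $\gon_1$ and $\gon_2$ (pendant cycles can raise gonality---chains of loops have gonality growing with genus---and extra parallel edges change which subsets can be fired without debt), so the claim that the first two gonalities ``stay fixed'' under inflation is an unproved assertion at least as hard as the lemma itself. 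The paper avoids any inflation step by building $G$ and $H$ as chains of multi-edges whose length grows with $g$: $G$ has vertices $v_1,\dots,v_g$ with a triple edge between $v_1$ and $v_2$ and double edges elsewhere, while $H$ has vertices $w_1,\dots,w_{g-1}$ with triple edges on the first two pairs and double edges elsewhere; the divisor arguments are then uniform in $g$ by construction.

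Two further tool-level problems. First, for $\gon_1\geq 3$ you cite Lemma \ref{lemma:lower_bound} (edge-connectivity) and Proposition \ref{tree}; but any chain-like or bottlenecked graph of the kind you describe has a $2$-edge cut, so these give only $\gon_1\geq 2$, and Proposition \ref{prop:hyperelliptic} describes the consequences of $\gon_1=2$ rather than ruling it out. A direct argument is needed, and the paper's is short: a positive-rank divisor with $2$ chips would have to place a chip on the end vertex (no chips can be moved there without introducing debt), leaving a single chip for the rest of the graph, which is insufficient. Second, for $\gon_2(H)\geq 6$ your plan to enumerate $v$-reduced degree-$5$ divisors is valid for one fixed graph but must be carried out for all $g\geq 6$ at once; the paper does this not by enumeration but by a reduction: since $D$ has rank $2$ one may assume $D(w_1)\geq 2$, and then each case ($D(w_1)=2$, $3$, or $\geq 4$) strands chips on $w_1$ and reduces the remaining play to the first or second gonality game on a copy of $G$ of genus $g-2\geq 4$ with too few chips, using the already-established facts $\gon_1(G)=3$ and $\gon_2(G)=5$ for that smaller graph. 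Some recursive structure of this kind is what your appeal to ``structural rigidity'' would have to supply; without it, the lower bound on $\gon_2(H)$---and hence the proof---is not complete.
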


\begin{proof}
We explicitly construct our graphs as follows.  Let $G$ have $g$ vertices $v_1,\cdots,v_g$, with three edges connecting $v_1$ and $v_2$ and two edges connecting $v_i$ and $v_{i+1}$ for $2\leq i\leq g-1$.  Let $H$ have $g-1$ vertices $w_1,\cdots,w_{g-1}$, with three edges connecting $w_1$ and $w_2$, three edges connecting $w_2$ and $w_3$, and two edges connecting $w_i$ and $w_{i+1}$ for  $3\leq i\leq g-2$.  These graphs are illustrated in Figure \ref{figure:G_and_H} in the case of $g=6$, along with certain divisors.

\begin{figure}[hbt]
    \centering
    \includegraphics[scale=1]{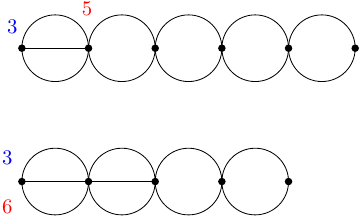}
    \caption{The graphs $G$ and $H$ for $g=6$, along with divisors of ranks $1$ and $2$}
    \label{figure:G_and_H}
\end{figure}

We first remark that neither graph has first gonality $2$: if a rank $1$ divisor has only $2$ chips, then it must place $1$ chip on the leftmost divisor since no chips can be moved there without introducing debt. But this only leaves one chip for the rest of the graph, which is not sufficient. The illustrated divisors of degree $3$, namely $3\cdot (v_1)$ on $G$ and $3\cdot (w_1)$ on $H$, do each have rank $1$, so $\gon_1(G)=\gon_2(H)=3$. Moreover, we claim that the divisor $5\cdot (v_2)$ of degree $5$ on $G$ has rank $2$. To see this, we note that for any $i,j\geq 2$ the divisor $5\cdot(v_2)$ is equivalent to the divisors $3\cdot(v_1)+2\cdot(v_i)$ and $(v_2)+2\cdot(v_i)+2\cdot(v_j)$.  This means that for any effective divisor of degree $2$, $5(v_2)$ is equivalent to some divisor greater than it, meaning that $r(5\cdot(v_2))\geq 2$.
Since $G$ is not hyperelliptic, we have $\gon_2(G)\neq 4$ by Theorem \ref{coppensclif}, so $\gon_2(G)=5$.  Although we have assumed that $g\geq 6$, note that this same argument works to argue that $\gon_2(G') = 5$ for any graph $G'$ constructed in the same manner as $G$ with $g(G') \geq 3$. 

It remains to show that $\gon_2(H)=6$.  Certainly $\gon_2(H)\leq 2\gon_1(H)=6$; indeed, we obtain a rank $2$ divisor $6\cdot(w_1)$ of degree $6$ by doubling our rank $1$ divisor $3\cdot(w_1)$.  Suppose for the sake of contradiction that there exists a rank $2$ divisor $D$ of degree $5$ on $H$.  Without loss of generality, since $D$ has rank $2$, we may assume that $D(w_1)\geq 2$.  We will now deal with several cases, where all chip-firing moves we consider are subset-firing moves that do not introduce debt.

\begin{itemize}
    \item Suppose $D(w_1)=2$.  Since $\val(w_1) > 2$, these two chips cannot be moved from $w_1$. Hence, we are left with three chips for the rest of the graph, which is equivalent to a copy of $G$ with genus $g-2\geq 4$. But as we have already shown, such a graph needs at least $5$ chips to win the second gonality game.

    \item Suppose $D(w_1)=3$, so that it has two chips elsewhere on the graph. If we remove one chip from $w_1$, the remaining two chips on $w_1$ are now isolated. We are now in the situation of playing the first gonality game on a copy of $G$ with genus $g-2\geq 4$ and only two chips.  However, we need at least three chips to win the first gonality game on $G$.
    
    \item Suppose $D(w_1)\in\{4,5\}$. Chip-fire $w_1$ to produce a linearly equivalent divisor where any chips remaining on $w_1$ are isolated and the rest of the graph has at most four chips. We are now back in the same situation as in the first case.
\end{itemize}
In all cases we have reached a contradiction, so we must have $\gon_2(H)=6$, as desired.
\end{proof}

We are now ready to prove Theorem \ref{theorem:gonality_sequences}.

\begin{proof}[Proof of Theorem \ref{theorem:gonality_sequences}]  Let $G$ be a graph with $g(G)\leq 5$.  We will prove genus by genus that $\gon_1(G)$ determines the gonality sequence of $G$.

For $g\leq 2$, we will see that the gonality sequence is determined by $g$ alone without needing to know $\gon_1(G)$.  If $g(G)=0$, then $G$ is a tree, and so has gonality sequence given by $\gon_k(G)=k$ by Proposition \ref{tree}.  If $g=1$, then $\gon_k(G) = k+1$ by the proof of Corollary \ref{corollary:determined_genus}. If $g = 2$, then by part (b) of Lemma \ref{genus}, $\gon_1(G) = 2$ so the gonality sequence of $G$ is the hyperelliptic sequence of genus $2$ given by Proposition \ref{prop:hyperelliptic}.

Now suppose $g=3$.  Since $G$ is not a tree, we know $\gon_1(G)>1$, and by Theorem \ref{theorem:up_to_5}, we have $\gon_1(G)\leq\floor{\frac{g+3}{2}}=3$.  If $\gon_1(G)=2$, then $G$ has the hyperelliptic gonality sequence of genus $3$ by Proposition \ref{prop:hyperelliptic}.  If $\gon_1(G)=3$, then for $k\geq 3$ we have $\gon_k(G)=k+3$ by part (c) of Lemma \ref{genus}.  By part (b) of Lemma \ref{genus}, $\gon_2(G) = 4$. We conclude that the gonality sequence for a graph is genus $3$ is either
\[2,4,6,7,8,9,\ldots\qquad\text{or}\qquad3,4,6,7,8,9,\ldots\]

Now suppose $g=4$.  Again we have $1 < \gon_1(G) \leq 3$. If $\gon_1(G)=2$, then $G$ has the hyperelliptic gonality sequence of genus $4$ by Proposition \ref{prop:hyperelliptic}.  If $\gon_1(G)=3$, then by Lemma \ref{genus} we have $\gon_k(G)=k+4$ for $k\geq 4$, and $\gon_3(G)=2g-2=6$.  Moreover, we can apply Corollary \ref{corollary:g-2} to deduce that $\gon_2(G)=2g-3=5$.  We conclude that the gonality sequence of a graph of genus $4$ is either
\[2,4,6,8,9,10,\ldots\qquad\text{or}\qquad3,5,6,8,9,10,\ldots\]

Finally, suppose $g=5$.  We have $\gon_1(G)>1$ and $\gon_1(G)\leq\floor{\frac{g+3}{2}}=4$.   If $\gon_1(G)=2$, then $G$ has the hyperelliptic gonality sequence of genus $4$ by Proposition \ref{prop:hyperelliptic}. Now assume $\gon_1(G)\geq 3$. We know $\gon_4(G)=8$ by part (b) of Lemma \ref{genus}, and since $G$ is not hyperelliptic, we have $\gon_3(G)=7$ by Theorem \ref{corollary:g-2}, so only the first two gonalities are yet to be determined. Since $G$ is not hyperelliptic, we know that $\gon_2(G) > 4$. By applying the Riemann-Roch theorem, we find that $G$ has a divisor $D$ of degree $3$ and rank $1$ if and only if it also has a divisor $K-D$ of degree $5$ and rank $2$. Combined with the fact that $5\leq \gon_2(G)<\gon_3(G)=7$, we have that if $\gon_1(G)=3$, then $\gon_2(G)=5$; and if $\gon_1(G)=4$, then $\gon_2(G)=6$.  We conclude that the only possible gonality sequences for a graph of genus $5$ are
\[2,4,6,8,10,11,\ldots,\qquad3,5,7,8,10,11,\ldots,\qquad\text{and}\qquad4,6,7,8,10,11,\ldots\]
Hence for all $g\leq 5$, the genus and the first gonality of a graph determine the gonality sequence. As shown in Lemma \ref{lemma:G_and_H}, this does not hold for $g\geq 6$.  This completes the proof.
\end{proof}

\begin{remark}
We remark that each possible gonality sequence discussed in the proof of Theorem \ref{theorem:gonality_sequences} is indeed the gonality sequence of some graph; see Table \ref{table:sequences} for examples corresponding to each sequence. For $g \leq  2$, any graph of genus $g$ will suffice since there is a unique possible gonality sequence for that genus.  For $g \geq 3$, the hyperelliptic gonality sequence of genus $g$ is achieved by the \emph{banana graph}, comprised of $2$ vertices connected by $g+1$ edges. The graphs $K_4$ and $K_{3,3}$ both have first gonality $3$ and thus achieve the remaining possible gonality sequences for graphs of genus $3$ and $4$, respectively. Finally, for genus $5$, the gonality sequence beginning with first gonality $3$ is achieved by the genus $5$ version of the graph $G$ used in Lemma \ref{lemma:G_and_H}. The final gonality sequence beginning with first gonality $4$ is achieved by the graph illustrated in Table \ref{table:sequences} (this graph has first gonality $4$ by \cite[Proposition 4.5]{gonthree}). This table also includes all known gonality sequences for graphs of genus $6$, as computed in the following example. Assuming the gonality conjecture holds for graphs of genus $6$, this is a complete enumeration of the gonality sequences of graphs of genus $6$.
\end{remark}

\begin{example}\label{example:g6}
In this example, we determine all possible gonality sequences of graphs of genus $6$, under the assumption that any such graph has gonality at most $\floor{\frac{g+3}{2}}=4$.  First if $\gon_1(G)=2$, then $G$ has the hyperelliptic gonality sequence of genus $6$ from Proposition \ref{prop:hyperelliptic}.  If $\gon_1(G)\geq 3$, then by Lemma \ref{genus} and Corollary \ref{corollary:g-2}, the gonality sequence of the graph is of the form
\[\gon_1(G),\gon_2(G),\gon_3(G),9,10,12,13,14,\ldots\]
Since $\gon_1(G) \geq 3$, we have $5 \leq \gon_2(G) < \gon_3(G)$ and $7 \leq \gon_3(G) < 9$. By the Riemann-Roch theorem for graphs, a graph of genus $6$ has a divisor of degree $3$ and rank $1$ if and only if it has a divisor of degree $7$ and rank $3$. Thus, if $\gon_1(G) = 3$, we have $\gon_3(G) = 7$ and so our gonality sequence has the form
 \[3,\gon_2(G),7,9,10,12,13,14,\ldots\]
 where $\gon_2(G)\in\{5,6\}$.  The example graphs $G$ and $H$ of genus $6$ from Lemma \ref{lemma:G_and_H} have first gonality $3$ and second gonalities $5$ and $6$, respectively, so both sequences are achieved by some graph.
 
 On the other hand, if $\gon_1(G)=4$, we have $\gon_3(G)=8$, so the gonality sequence is of the form
  \[4,\gon_2(G),8,9,10,12,13,14,\ldots\]
By the Riemann-Roch theorem for graphs, a graph of genus $6$ has a divisor of degree $4$ and rank $1$ if and only if it has a divisor of degree $6$ and rank $2$, so $5\leq \gon_2(G)\leq 6$.  By \cite[Theorem 1]{panizzut}, the complete graph $K_5$ has genus $6$, first gonality $4$, and second gonality $5$. We can also construct a genus $6$ graph $G$ with first gonality $4$ and second gonality $6$ as follows:  start with four vertices $v_1,v_2,v_3,v_4$, and connect $v_1$ and $v_2$ by two edges, $v_2$ and $v_3$ by five edges, and $v_3$ and $v_4$ by two edges.  A similar argument to that of Lemma \ref{lemma:G_and_H} shows that $\gon_1(G)=4$ and $\gon_2(G)=6$.  Thus both sequences are achieved by some graph.

These sequences and example graphs achieving them appear in Table \ref{table:sequences}.  If we do not assume the gonality conjecture holds, then conceivably
\[5,7,8,9,10,12,13,14,\ldots\qquad\text{and}\qquad6,7,8,9,10,12,13,14,\ldots\]
could be gonality sequences for graphs of genus $6$.  No other sequence would be possible, since $\gon_1(G)>4$ implies $\gon_2(G)>6$, and since the sequences must be increasing and stabilize at $9,10,12,13,14,\ldots$.  Thus, to prove that the gonality conjecture holds for graphs of genus $6$, it would suffice to show that neither of the above sequences is realized as the gonality sequence of a graph of genus $6$.

We also remark that the Brill-Noether conjecture in the case of $g=6$, $r=2$, and $d=6$ says that any graph of genus $g = 6$ should have $\gon_2(G)\leq 6$.  Thus the gonality conjecture for genus $6$ (i.e. the Brill-Noether conjecture for $g=6$, $r=1$, and $d=4$) is equivalent to the Brill-Noether conjecture for $g=6$, $r=2$, and $d=6$.
\end{example}

\begin{table}
\begin{center}
\begin{tabular}{|c|c|c|}
    \hline
    Genus & Gonality sequence & Example graph\\
    \hline
    $0$ & $1,2,3,4,5,6,\ldots$ & \includegraphics{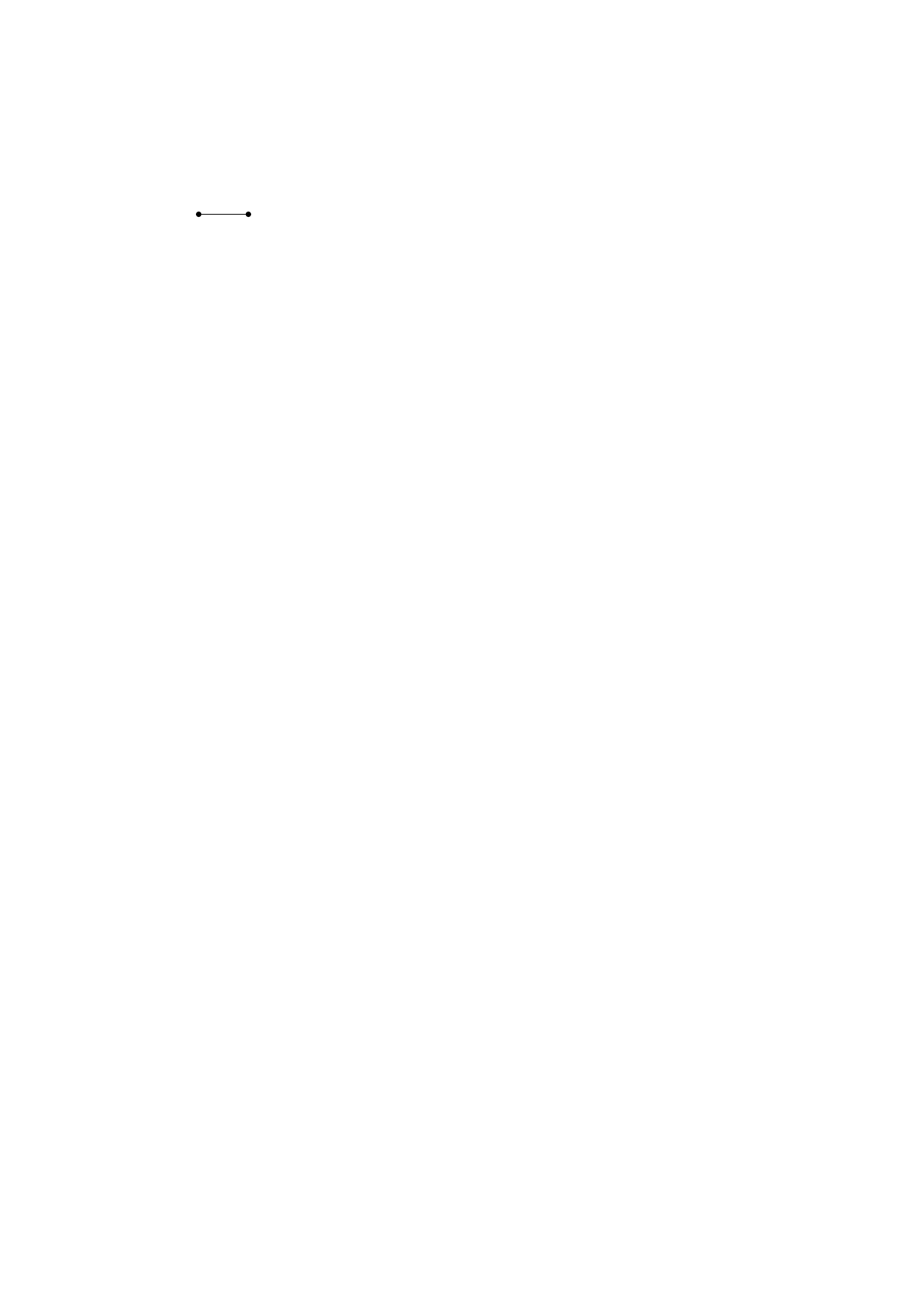} \\ \hline
    $1$ & $2,3,4,5,6,7,\ldots$ & \includegraphics{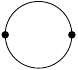} \\ \hline
    $2$ & $2,4,5,6,7,8,\ldots$ & \includegraphics{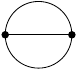} \\
    \hline
    \multirow{2}{*}{$3$}&$2,4,6,7,8,9,\ldots$&\includegraphics{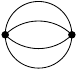}\\
    \cline{2-3}
    &$3,4,6,7,8,9,\ldots$&\includegraphics{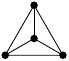}\\
    \hline
    \multirow{2}{*}{$4$}&$2,4,6,8,9,10,\ldots$&\includegraphics{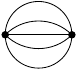}\\
    \cline{2-3}
    &$3,5,6,8,9,10,\ldots$&\includegraphics{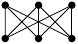}\\
    \hline
\end{tabular}\quad \begin{tabular}{|c|c|c|}
    \hline
    Genus & Gonality sequence & Example graph\\
    \hline
    \multirow{3}{*}{$5$}&$2,4,6,8,10,11,\ldots$&\includegraphics{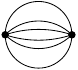}\\
    \cline{2-3}
    &$3,5,7,8,10,11,\ldots$&\includegraphics{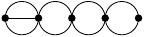}\\
    \cline{2-3}
    &$4,6,7,8,10,11,\ldots$&\includegraphics{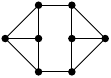}\\
    \hline
    \multirow{5}{*}{6}&$2,4,6,8,10,12,\ldots$&\includegraphics{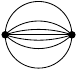}\\
    \cline{2-3}
    &$3,5,7,9,10,12,\ldots$&\includegraphics{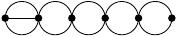}\\
    \cline{2-3}
    &$3,6,7,9,10,12,\ldots$&\includegraphics{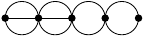}\\
    \cline{2-3}
    &$4,5,8,9,10,12,\ldots$&\includegraphics{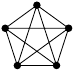}\\
    \cline{2-3}
    &$4,6,8,9,10,12,\ldots$&\includegraphics{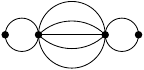}\\
    \hline
\end{tabular}

\caption{All possible gonality sequences for graphs of genus at most $5$, with example graphs having those gonality sequences; and the same for genus $6$, assuming the gonality conjecture holds for graphs of genus $6$.}
\label{table:sequences}
\end{center}
\end{table}

We now compare and contrast the properties of our gonality sequences of graphs with those of algebraic curves. The analogue of Proposition \ref{prop:hyperelliptic} holds for algebraic curves of gonality $2$, also known as hyperelliptic curves: the gonality sequence of the curve is determined by the genus of the curve and is given by the same formula. However, the behavior of graphs and curves of gonality $3$ do not so closely  mirror one another. We recall the following formula from algebraic geometry \cite[Remark 4.5]{trigonal}: if $C$ is a smooth projective curve of gonality $3$ and genus $g\geq 4$, then its $k^{\text{th}}$ gonality is
\[
\gon_k(C) = 
\begin{cases}
3k \quad &\text{if $1 \leq k \leq \left\lfloor \frac{g-1}{3} \right\rfloor$} \\
g + k - 1 - \left\lfloor \frac{g-k-1}{2} \right\rfloor \quad &\text{if $\left\lfloor \frac{g-1}{3} \right\rfloor < k \leq g - 1$} \\
g + k \quad &\text{if $k \geq g$}.
\end{cases}
\]
In particular, if a curve $C$ has gonality $3$, then its gonality sequence is determined by its genus.  The same does not hold for graphs of gonality $3$, as illustrated by Lemma \ref{lemma:G_and_H}. Thus, certain sequences which are the gonality sequence of a graph are \emph{not} the gonality sequence of any smooth projective curve. For instance, by Example \ref{example:g6}, there exists a graph $G$ of genus $6$ with gonality sequence
\[3,5,7,9,10,12,13,14,\ldots\]
 If a smooth projective algebraic curve were to have this gonality sequence, it must also have genus $6$, by the analogue of Corollary \ref{corollary:determined_genus} for curves. But any such curve with first gonality $3$ must have gonality sequence \[3,6,7,9,10,12,13,14,\ldots\] by the above formula.  Thus the gonality sequence of $G$ is not the gonality sequence of any smooth projective curve.  A similar argument applies for the genus $g$ version of the graph $G$ from Example \ref{lemma:G_and_H} for any $g\geq 7$:  any trigonal algebraic curve of genus $g$ must have second gonality $6$, but $G$ has second gonality $5$.  Thus for every genus $g\geq 6$, there exists a gonality sequence of a graph that is not the gonality sequence of a smooth algebraic curve.  However, all graph gonality sequences up to genus $5$ are also gonality sequences of curves:  there certainly exist curves of all possible genus and first gonality pairs, and the remainder of each sequence is determined by similar Riemann-Roch and Clifford's theorem arguments for algebraic curves.

We close this section with a remark on a known result for higher gonalities of algebraic curves which remains open for graphs.

\begin{lemma}[Lemma 4.3 in \cite{trigonal}]\label{lemma:adding_algebraic}  Let $C$ be a smooth projective curve of genus $g\geq 4$, such that $\textrm{gon}_{r+s}(C)=\textrm{gon}_{r}(C)+\textrm{gon}_{s}(C)$ for some $r,s\in\Z$.  Then $\textrm{gon}_k(C)=k\gon_1(C)$ for $1\leq k\leq r+s$.
\end{lemma}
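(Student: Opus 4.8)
The plan is to first dispatch the upper bound and reduce the entire statement to a single equality at the top index, and only then to isolate the genuinely curve-theoretic input. Throughout write $\gamma := \gon_1(C)$. The curve analogue of Lemma \ref{sum} holds: if $r(D)\geq a$ and $r(E)\geq b$ then $r(D+E)\geq a+b$, so $\gon_{a+b}(C)\leq \gon_a(C)+\gon_b(C)$, and in particular $\gon_k(C)\leq k\gamma$ for every $k$, realized by $k$ copies of a minimal base-point-free pencil $A$ (a divisor with $r(A)=1$, $\deg A=\gamma$). The first key observation is a squeeze: it suffices to prove $\gon_{r+s}(C)=(r+s)\gamma$. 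Indeed, for any $1\leq k\leq r+s$, subadditivity gives
\[\gon_{r+s}(C)\leq \gon_k(C)+\gon_{r+s-k}(C)\leq k\gamma+(r+s-k)\gamma=(r+s)\gamma,\]
so if the two ends agree, every inequality in the chain is an equality, forcing $\gon_k(C)=k\gamma$ for all $k$ in range. Thus the lemma collapses to the single equality $\gon_{r+s}(C)=(r+s)\gamma$.

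Next I would reduce this equality to its two endpoints. Since $\gon_r(C)\leq r\gamma$ and $\gon_s(C)\leq s\gamma$, the hypothesis $\gon_{r+s}(C)=\gon_r(C)+\gon_s(C)$ already yields $\gon_{r+s}(C)\leq (r+s)\gamma$; the reverse inequality is equivalent, given the hypothesis, to the conjunction $\gon_r(C)=r\gamma$ and $\gon_s(C)=s\gamma$, since each summand is at most its bound and their sum attains $(r+s)\gamma$ only when both are tight. Hence the crux is the following claim, stated in contrapositive form: \emph{if $\gon_r(C)<r\gamma$, then subadditivity is strict, i.e.\ $\gon_{r+s}(C)<\gon_r(C)+\gon_s(C)$}. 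Applying this claim to $r$ and, symmetrically, to $s$, the hypothesis forces both endpoints onto the line $k\mapsto k\gamma$, completing the reduction.

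To prove the claim I would work with the minimal base-point-free pencil $A$ and the base-point-free pencil trick. Let $D$ compute $\gon_r(C)$ (so $r(D)=r$ and $\deg D=\gon_r(C)<r\gamma$) and let $E$ compute $\gon_s(C)$. The divisor $D+E$ has degree $\gon_r(C)+\gon_s(C)$ and rank at least $r+s$; strictness will follow if $D+E$ has a base point (then $D+E-p$ realizes the same rank in strictly smaller degree) or carries an excess section. The guiding point is that $\deg D<r\gamma$ prevents $D$ from being ``composed with the pencil $A$'' in the naive way, and one should be able to extract the required base point or extra section from the multiplication map $H^0(A)\otimes H^0(D)\to H^0(A+D)$ together with Clifford-index estimates for the relevant special divisors.

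I expect this last step to be the main obstacle, and it is where genuine curve geometry is unavoidable: the formal properties used above (subadditivity and strict monotonicity, the analogues of Lemmas \ref{sum} and \ref{strict}) do \emph{not} suffice. For instance, the sequence $3,4,6,8,\ldots$ satisfies both and has $\gon_4=\gon_2+\gon_2$ while $\gon_2=4\neq 6$, which would contradict the lemma; this sequence is excluded only because a curve with $\gon_1(C)=3$ admits no divisor of degree $4$ and rank $2$, by Clifford's theorem. Thus the heart of the argument is making precise, via the pencil trick and the Clifford index, that once the sequence drops strictly below the line $k\mapsto k\gamma$ at index $r$, every subsequent instance of subadditivity becomes strict.
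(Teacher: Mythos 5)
Your formal reductions are correct: subadditivity (the curve analogue of Lemma \ref{sum}) gives $\gon_k(C)\leq k\gamma$, the squeeze argument correctly collapses the lemma to the single equality $\gon_{r+s}(C)=(r+s)\gamma$, and that equality is, given the hypothesis, equivalent to tightness at the two endpoints $\gon_r(C)=r\gamma$ and $\gon_s(C)=s\gamma$. But the proof then stops exactly where the lemma actually lives. Your ``crux claim'' --- that $\gon_r(C)<r\gamma$ forces strict subadditivity --- is not proven; you gesture at the base-point-free pencil trick and Clifford-index estimates and say you expect this step to be the main obstacle. It is not a technical loose end but the entire content of the statement: everything before it is formal bookkeeping valid for graphs as well, and (as your own example of the sequence $3,4,6,8,\ldots$ shows, and as the paper's Example \ref{example:payne} and Question \ref{question:adding_graphs} confirm) no argument using only subadditivity and monotonicity can close it.

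For comparison, the paper does not reprove the lemma either (it is quoted from \cite{trigonal}), but it does identify the precise curve-theoretic input, namely Lemma 1.8 of \cite{4-gonal}: if $r(D+E)=r(D)+r(E)$, then $D\sim r(D)F$ and $E\sim r(E)F$ for a single divisor $F$ of rank $1$. Granting that fact, your crux claim follows in two lines: take $D,E$ computing $\gon_r(C),\gon_s(C)$; the hypothesis plus strict monotonicity (the analogue of Lemma \ref{strict}) forces $r(D+E)=r+s=r(D)+r(E)$ exactly, so $D\sim rF$ with $r(F)=1$, whence $\gon_r(C)=r\deg(F)\geq r\gon_1(C)=r\gamma$, which combined with subadditivity gives equality (and likewise for $s$). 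So your outline is compatible with the known proof, but as submitted it has a genuine gap: the splitting statement (or an equivalent, such as your strict-subadditivity claim) must be proved, via the pencil trick or otherwise, and none of that work appears in the proposal.
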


We pose the analogous question for graphs.

\begin{question}\label{question:adding_graphs}
Let $G$ be a graph, and suppose $\gon_{r+s}(G)=\gon_r(G)+\gon_s(G)$.  Is it true that $ \gon_k(G)=k\gon_1(G)$ for $1\leq k\leq r+s$?
\end{question}

Note that the answer to this question is affirmative for $g(G)\leq 5$, as can be verified by examining all gonality sequences in Table \ref{table:sequences}.  We also remark that such a criterion would be useful in determining the first three gonalities of a graph; for instance, it would rule out $(2,3,5)$ and $(3,5,8)$ as possible starts to a gonality sequence.  The first of these is impossible by Proposition \ref{prop:hyperelliptic}; the authors do not know whether the second is possible.

The proof of Lemma \ref{lemma:adding_algebraic} in \cite{trigonal} uses the key fact that given two divisors $D$ and $E$ on a curve such that $r(D+E)=r(D)+r(E)$, we have $D\sim r(D)F$ and $E\sim r(E)F$ for some divisor $F$ of rank $1$; see \cite[Lemma 1.8]{4-gonal}.  Unfortunately such an argument will not work for graphs, as shown in the following example.  Thus a new approach will be needed to answer Question \ref{question:adding_graphs}.

\begin{example}\label{example:payne}
Let $G$ be the graph pictured in Figure \ref{fig:payne}.  By the proof of Theorem 1.2 in \cite{loop_of_loops}, each divisor  $D_i=v_1+v_2+w_i$ where $1\leq i\leq 3$ has rank $1$.  (Although this result is proven for metric graphs, the same holds here in the finite case.)  Since $D_1$, $D_2$, and $D_3$ are pairwise linearly inequivalent, we may choose two of them, say $D_i$ and $D_j$ such that $D_i\not\sim K-D_j$.  Then we have
\[r(D_i+D_j)-r(K-D_i-D_j)=6+1-4=3.\]
Note that $\deg(K-D_i-D_j)=0$, and since $D_i\not\sim K-D_j$, we have $r(K-D_i-D_j)=-1$.  Thus $r(D_i+D_j)=2=r(D_i)+r(D_j)$.  However, since $D_i\not\sim D_j$, there cannot exist a divisor $F$ with $D_i\sim F$ and $D_j\sim F$.  Thus the natural graph-theoretic analogue of \cite[Lemma 1.8]{4-gonal} does not hold.

\begin{figure}[hbt]
\begin{center}
\includegraphics[scale=0.8]{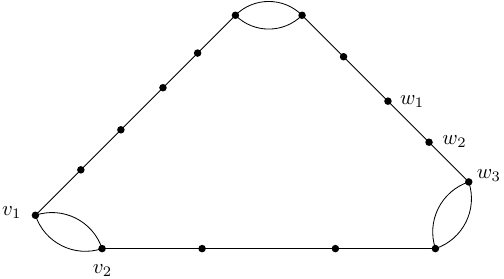}
\caption{The graph $G$ from Example \ref{example:payne}}
\label{fig:payne}
\end{center}
\end{figure}

\end{example}

\section{Banana Graphs and Second Gonalities}\label{section:bananas}

The main goal of this section is to prove that given any pair of ``reasonable'' integers $m$ and $n$, there exists a graph $G$ with first gonality $m$ and second gonality $n$.  Certainly a necessary condition is that $m<n\leq 2m$; we will show that this condition is sufficient by constructing families of graphs called the generalized banana graphs.  There are certainly other graphs that can achieve these gonalities as well; for example, complete graphs \(K_m\) achieve first gonality \(m\) and second gonality \(m+1\) \cite{panizzut}, while complete bipartite graphs of the form \(K_{m,m}\) achieve first gonality \(m\) \cite{db} and second gonality \(2m-1\) \cite{bipartite}.

As in the previous section, we define the \textit{banana graph} $B_n$ as a graph with two vertices and $n$ edges connecting them. It is clear that for $n \geq 2$, $\gon_1(B_n) = 2$, since $\gon_1(G)\leq |V(G)|$ and $\gon_1(G)>1$ for any graph that is not a tree.  It follows immediately from Proposition \ref{prop:hyperelliptic} and the fact that $g(B_n)=n-1$ that $\gon_k(B_n)=2k$ for $1\leq k \leq n-1$.


A \textit{generalized banana graph} is a graph with vertices $\{v_1,\dots,v_n\}$ such that $v_i$ is connected to $v_{i+1}$ by at least one edge, and such that there are no other edges present in the graph.  In other words, it is a collection of $n-1$ banana graphs, glued together in a line.  In this section we study two families of generalized banana graphs:  those with a constant number of edges between pairs of vertices, and those with a descending number of edges.

Let $B_{n,e}$ denote the generalized banana graph on $n$ vertices where $|E(v_i,v_{i+1})|=e$ for $1\leq i\leq n-1$. Note that $B_{n,e}$ has edge-connectivity $\eta(B_{n,e})=e$, so by Lemma \ref{lemma:lower_bound} we have 
\[\gon_1(B_{n,e}) \geq \min\{\eta(B_{n,e}),|V(B_{n,e})|\}= \min \{e, n\}.\]  
The divisors $e(v_1)$ and $(v_1)+\cdots+(v_n)$ both have positive rank, so  $\gon_1(B_{n,e}) =  \min \{e, n\}$.

\begin{example}
The generalized banana graphs $B_{4,2}$ and $B_{5,5}$ are illustrated in Figure~\ref{fig:genbananas}.
\end{example}

\begin{figure}[H]
\begin{center}
\includegraphics[scale=0.8]{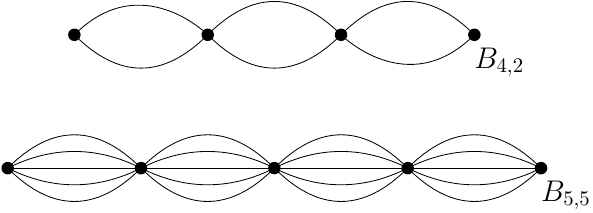}
\caption{The generalized banana graphs $B_{4,2}$ and $B_{5,5}$.}
\label{fig:genbananas}
\end{center}
\end{figure}

Over the following three lemmas, we will determine the second gonality of $B_{e,n}$ for any values of $e$ and $n$.  We will see that these graphs will not give us all the desired pairs of first and second gonalities, meaning we will need to consider other generalized banana graphs.

\begin{lemma}
If $n<e$, then $\gon_2(B_{e,n})=2n$.
\end{lemma}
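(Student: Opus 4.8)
The plan is to prove the two inequalities $\gon_2(B_{e,n}) \le 2n$ and $\gon_2(B_{e,n}) \ge 2n$ separately, writing $v_1,\dots,v_e$ for the vertices of $B_{e,n}$ and recalling that consecutive vertices are joined by $n$ edges, with $n<e$. Since $\gon_1(B_{e,n}) = \min\{n,e\} = n$, the upper bound is immediate from Lemma \ref{sum}: taking $a_1 = a_2 = 1$ gives $\gon_2 \le \gon_1 + \gon_1 = 2n$. All the work is in the lower bound, which I would phrase contrapositively: every effective divisor $D$ with $\deg(D) \le 2n-1$ has $r(D) \le 1$. Suppose instead $r(D) \ge 2$; replacing $D$ by $\Red_{v_1}(D)$ I may assume $D$ is $v_1$-reduced, so (running Dhar's algorithm from $v_1$, whose fire sweeps across the path and must consume every vertex) $D(v_i) \le n-1$ for all $i \ge 2$. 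Write $a_1 = D(v_1)$.

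The engine of the proof is the observation that moving chips in $B_{e,n}$ is ``all or nothing'': to fire any nonempty proper vertex subset $A$ without creating debt, a boundary vertex of $A$ must carry at least its outdegree, which is at least $n$. Hence if every coordinate of a divisor is at most $n-1$, no subset can be fired and the divisor is the unique effective divisor in its class. This disposes of the case $a_1 \le n-1$: then all coordinates of $D$ are $< n$, so rigidity forces $r(D) \ge 2$ to mean $D - E \ge 0$ for every effective $E$ of degree $2$, i.e.\ $D(v) \ge 2$ at all $e$ vertices, whence $\deg(D) \ge 2e > 2n-1$, a contradiction. So I may assume $a_1 \ge n$; since $a_1 \le \deg(D) \le 2n-1 < 2n$, the vertex $v_1$ holds exactly one ``pile'' of $n$ chips together with a remainder $a_1 - n \le n-1$, and the total of all non-pile chips is $\deg(D) - n \le n-1$.

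The key structural step, and the part I expect to be the main obstacle to make fully rigorous, is to show that this pile is the only mobile object: the effective divisors equivalent to $D$ are exactly $B + n(v_t)$ for $t = 1,\dots,e$, where $B := D - n(v_1)$ is the fixed, effective ``base'' divisor of degree $\le n-1$. I would prove this using the cut invariant $\sigma_k(D') := \sum_{i>k} D'(v_i)$: since every firing moves a multiple of $n$ chips across the cut separating $\{v_1,\dots,v_k\}$ from the rest, $\sigma_k \bmod n$ is a linear-equivalence invariant. Because $\deg(B) \le n-1$, each $\sigma_k(B)$ already lies in $[0,n-1]$, so for any effective $D' \sim D$ the value $\sigma_k(D')$ is forced to equal $\sigma_k(B)$ or $\sigma_k(B)+n$; monotonicity of $\sigma_\bullet$ (which is non-increasing for an effective divisor) then pins down a single threshold $t$ and yields $D' = B + n(v_t)$. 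Granting this, I finish by a pigeonhole count: $B$ has at most $\deg(B) \le n-1$ vertices in its support, so at least $e-(n-1) \ge 2$ vertices $v_p \ne v_q$ carry no base chips. The degree-$2$ divisor $E = (v_p)+(v_q)$ cannot be dominated by any $B + n(v_t)$, since covering $v_p$ forces $t=p$ while covering $v_q$ forces $t=q$; hence $D - E \not\sim$ effective, contradicting $r(D) \ge 2$. This shows $\gon_2(B_{e,n}) \ge 2n$ and completes the proof.
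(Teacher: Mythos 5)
Your proof is correct, but first a notational wrinkle you could not have seen: the paper defines $B_{a,b}$ to have $a$ vertices and $b$ edges between consecutive vertices, so the literal reading of the statement, which you adopted ($e$ vertices, $n$-fold edges, $n<e$), is not the graph the paper's proof of this lemma actually treats. The proof silently switches to $B_{n,e}$ ($n$ vertices, $e$-fold edges, $n<e$); the subscripts in statement and proof are inconsistent. Both readings are true and both give the answer $2n$: your reading is, after renaming letters, exactly the content of the paper's \emph{next} lemma (``if $e<n$ then $\gon_2(B_{n,e})=2e$''), and your method adapts verbatim to the other reading by exchanging the roles of the vertex count and the edge multiplicity. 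So nothing is lost, but you should know the target was ambiguous.

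On the mathematics: your route is genuinely different from the paper's and it works. The paper argues by cases on $\supp(D)$ (full support; exactly one empty vertex; at least two empty vertices), and in each case exhibits a specific effective $E$ of degree $2$, then runs Dhar's burning algorithm (after possibly shuffling chips) to certify that $D-E$ is reduced while in debt, hence $r(D)<2$. You instead classify the entire linear system: after $v_1$-reducing, you split off the ``rigid'' case where every coordinate is below the edge multiplicity (no proper subset can fire, so $D$ is the unique effective divisor in its class, forcing degree at least $2e$), and otherwise you isolate a single pile of $n$ chips and show, via the invariant $\sigma_k \bmod n$ together with monotonicity of the partial sums, that every effective divisor equivalent to $D$ equals $B+n(v_t)$ for some $t$; pigeonhole on the at least two vertices missed by $\supp(B)$ finishes it. The step you flagged as the main obstacle is in fact airtight: the set of cuts $k$ with $\sigma_k(D')=\sigma_k(B)+n$ is forced to be an initial segment of $\{1,\dots,e-1\}$ (since $\sigma_k(D')\geq n > \sigma_{k'}(B)$ propagates leftward), and a divisor is recovered exactly from its partial sums, so $D'=B+n(v_t)$ follows. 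What your approach buys: it determines $\abs{D}$ completely (a strictly stronger statement than $r(D)<2$), and the same lower-bound argument, with piles of size equal to the edge multiplicity, handles all three regimes $n<e$, $e<n$, $n=e$ uniformly, which in the paper are three separate case analyses. What the paper's approach buys: each case is an immediate Dhar computation requiring no invariant bookkeeping, and it stays entirely inside the chip-firing toolkit the rest of the paper uses.
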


\begin{proof}
By Lemma \ref{sum}, $\gon_2(B_{n,e}) \leq 2\min\{n, e\} = 2n$. Now, suppose that we have a divisor $D \in \Div_+(B_{n,e})$ with $\deg(D) = 2n-1$. We will show that $r(D)<2$. We split into the following cases.
\begin{itemize}
    \item[(1)] If $\supp(D) = V(B_{n,e})$, then there exists at least one vertex $v_i$ with $D(v_i) = 1$. Furthermore, for all other vertices $v_j$, $D(v_j) \leq n < e$. Hence, the divisor $D - 2 (v_i)$ is $v_i$-reduced, implying that $r(D) < 2$.
    \item[(2)] If exactly one vertex $v_i$ has zero chips and all other vertices have at least one chip, then there are two cases.
    \begin{itemize}
        \item[(i)] There exists exactly one vertex $v_j$ with $e$ chips. Note that since $2n - 1 - e \leq n - 2$, $v_j$ clearly cannot have more than $e$ chips and since $2n - 1 - 2e < 0$, we cannot have more than one vertex with $e$ chips. If $v_j$ has $e$ chips, then every other vertex (except for $v_i$ and $v_j)$ must have exactly one chip. Hence, the divisor $D - (v_i) - (v_j)$ is $v_i$-reduced.
        \item[(ii)] All vertices have fewer than $e$ chips. Notice that in this case, $D - 2(v_i)$ is $v_i$-reduced.
    \end{itemize}
    Both outcomes imply that $r(D)<2$.
    \item[(3)] If there are at least two distinct vertices $v_i$ and $v_j$ with $D(v_i)=0=D(v_j)$, then there is either exactly one other vertex $v_k$ such that $D(v_k)\geq e$, or there is no such vertex. Suppose such a $v_k$ exists, and consider the divisor $D'=D-(v_i)-(v_j)$. Then we fire subsets of vertices in order to move $e$ chips from $v_k$ to either $v_i$ or $v_j$, whichever is the closer of the two to $v_k$; without loss of generality assume that it is \(v_i\). Then, we can run Dhar's burning algorithm on this new configuration with respect to $v_j$, and the whole graph burns. If no such $v_k$ exists, then there is already no way to fire any subset of vertices without introducing debt. It follows that $r(D)<2$. 
\end{itemize}
Thus, $\gon_2(B_{n,e})=2n$.
\end{proof}

\begin{lemma}
If $e<n$, then $\gon_2(B_{n,e})=2e$.
\end{lemma}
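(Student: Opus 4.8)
The goal is to show that when $e < n$, the second gonality of $B_{n,e}$ equals $2e$. The upper bound is immediate: by Lemma \ref{sum}, $\gon_2(B_{n,e}) \leq 2\gon_1(B_{n,e}) = 2\min\{n,e\} = 2e$, realized concretely by the divisor $2e \cdot (v_1)$, which has rank $2$ because doubling the rank-$1$ divisor $e\cdot(v_1)$ cannot decrease rank. So the entire content of the lemma is the matching lower bound: every effective divisor of degree $2e-1$ has rank at most $1$.

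The plan is to fix an arbitrary effective divisor $D$ with $\deg(D) = 2e - 1$ and exhibit an effective divisor $E$ of degree $2$ such that $|D - E| = \emptyset$, which forces $r(D) < 2$. My strategy mirrors the reduced-divisor approach used in the previous lemma: for a well-chosen vertex $q$, compute $\Red_q(D)$ via Dhar's burning algorithm and show that after removing two chips appropriately, the resulting divisor is $q$-reduced with $q$ in debt (or that some vertex cannot be reached). First I would replace $D$ by a convenient linearly equivalent representative. Since chip-firing on a path-like generalized banana graph tends to push chips toward one end, I would $v_1$-reduce $D$ (or $v_n$-reduce it); the $q$-reduced form on $B_{n,e}$ has a clean structure because each internal cut $E(v_i, v_{i+1})$ has exactly $e$ edges, so reducedness constrains how many chips can sit past any given vertex. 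Concretely, in a $v_1$-reduced divisor, the total number of chips on $\{v_{i+1}, \dots, v_n\}$ is bounded by what can be "held" against the cut of size $e$, and with only $2e-1$ total chips this severely limits the configuration.

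The key structural observation I would lean on is that on $B_{n,e}$, for any subset of the form $A = \{v_{i+1}, \dots, v_n\}$ (a "tail"), the outdegree of $A$ at its leftmost vertex $v_{i+1}$ is exactly $e$. Thus in a $q$-reduced divisor with $q = v_1$, Dhar's algorithm forces that for the fire to stop, some vertex in every tail must carry fewer than $e$ chips. The main case division should be according to how many vertices carry a "full" load of $e$ chips: since $2e - 1 < 2e$, at most one vertex can carry $e$ or more chips, and if one does, the remaining $e - 1$ chips are spread among the other $n - 1 \geq e$ vertices. I would argue that in each case I can strip off two chips and run Dhar's burning algorithm (reducing with respect to whichever endpoint is opposite the chip mass) so that the entire graph burns with a deficit, certifying $v$-reducedness and hence $r(D-E) = -1$ for the chosen effective $E$ of degree $2$.

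The main obstacle I expect is handling the case where the $2e-1$ chips are distributed so that no single vertex is full but chips are spread across many vertices of the long tail ($n > e$), since then no single cut obviously blocks the fire and one must track the cumulative effect of firing nested subsets along the path. Here the length $n > e$ is exactly what makes the lower bound work: there are more vertices than the edge-connectivity can "defend," so the fire propagating from a far endpoint will eventually exhaust the available chips before crossing all $n-1$ cuts. I would make this precise by a telescoping/counting argument along the path — summing the per-cut deficits $e - (\text{chips held})$ and showing the total available chips $2e - 1$ cannot simultaneously defend two independently chosen debt locations. Making this counting argument rigorous and uniform across the placement of the two subtracted chips, rather than checking a proliferation of subcases as in the $n < e$ lemma, is where the real work lies.
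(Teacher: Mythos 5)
Your upper bound and your broad plan (fix an effective $D$ of degree $2e-1$, exhibit an effective $E$ of degree $2$ with $\abs{D-E}=\emptyset$, certify this with Dhar's algorithm) coincide with the paper's, but the proposal stops exactly where the proof has to happen, and the part you defer as ``the real work'' rests on a misreading of how fire propagates on $B_{n,e}$. Every cut $E(v_i,v_{i+1})$ has exactly $e$ edges, and the graph is a path of bananas, so in a single pass of the burning algorithm the fire reaches each vertex from one side only and is blocked there if and only if that vertex holds at least $e$ chips. Consequently, whenever every vertex holds at most $e-1$ chips, the whole graph burns in one pass --- no firing, no telescoping count of per-cut deficits, no tracking of nested subsets. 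Since $\deg(D)=2e-1<2e$, at most one vertex of $D$ can hold $e$ or more chips, so the ``spread-out'' configuration you single out as the main obstacle is in fact the trivial case; this is exactly why the paper's case analysis closes so quickly.

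The genuine difficulty, which your plan does not address, is the choice of $E$ and the fact that Dhar's algorithm requires a semi-reduced input (debt at a single vertex). Take $D=(2e-1)\cdot(v_1)$ with $e\geq 2$, which is $v_1$-reduced of degree $2e-1$. For any $E$ of degree $2$ that puts debt on at most one vertex --- in particular $E=2(v_i)$ for any $i$, or $E=(v_1)+(v_i)$ --- the heavy vertex $v_1$ can fire prefix subsets to ship $e$ chips to the unique debt location, so $D-E$ is equivalent to an effective divisor and certifies nothing. The only certificates are $E=(v_i)+(v_j)$ with $v_i,v_j$ two distinct zero-chip vertices, and then $D-E$ is in debt at two vertices simultaneously, so the burning algorithm cannot be run on it as is. This is precisely the paper's case (3): one first fires tail subsets by hand to transport $e$ chips from the heavy vertex to the nearer debt vertex, and only then burns from the other, at which point every vertex holds at most $e-1$ chips and the whole graph burns. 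Your prescription --- strip off two chips and reduce with respect to an endpoint opposite the chip mass --- fails on this example (both the certificate and the reduction vertex must be interior zero-chip vertices), and without the two-debt case and its pre-firing step the lower bound does not close.
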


\begin{proof}
By Lemma \ref{sum}, $\gon_2(B_{n,e}) \leq 2 \min \{n, e \} = 2e$. Suppose that we have a divisor $D \in \Div_+(B_{n,e})$ such that $\deg(D) = 2e - 1$.
Again, we proceed by cases to show that $r(D)<2$.
\begin{itemize}

    \item[(1)] If $\supp(D) = V(B_{n,e})$, at least one vertex $v_i$ has exactly one chip. Furthermore, suppose some vertex $v_j$ has at least $e$ chips. Then, we have $2e - 1 - e = e - 1 < n-1$ chips remaining for the $n-1$ vertices, a contradiction. Thus, if we run Dhar's burning algorithm on the divisor $D - 2(v_i)$, then the entire graph burns because $D(v) < e$ for all vertices $v$. Thus $r(D)<2$.
    
    \item[(2)] If there is exactly one vertex $v_i$ satisfying $D(v_i)=0$, we have two cases.
    \begin{itemize}
        \item[(i)] If there exists a vertex $v_j$ such that $D(v_j) \geq e$, we know that $v_j$ must have exactly $e$ chips because $2e - 1 - (e + 1) = e - 2 < n - 2$, which is a contradiction to all vertices besides $v_i$ having a chip. Furthermore, all other vertices except for $v_i$ and $v_j$ must have exactly one chip. This is because we have $e - 1 \leq n-2$ chips remaining for $n-2$ vertices. Run Dhar's burning algorithm on the the divisor $D - (v_i) - (v_j)$ beginning at the vertex $v_i$. Notice that in this divisor, no vertex has more than $e-1$ chips so the whole graph burns.
        \item[(ii)] If all vertices have fewer than $e$ chips, then we can run Dhar's burning algorithm on the divisor $D - 2(v_i)$, beginning at the vertex $v_i$, which burns the whole graph.
    \end{itemize}
    In both cases we can conclude that $r(D)<2$.
    \item[(3)] If there are at least two vertices, $v_i$ and $v_j$ with zero chips, then consider the divisor $D-(v_i)-(v_j)$. There is either exactly one other vertex $v_k$ such that $D(v_k)\geq e$, or there is no such vertex. Suppose such a $v_k$ exists. Then we can fire subsets of vertices in order to move $e$ chips from $v_k$ to $v_i$, without loss of generality. Then, we can run Dhar's burning algorithm on this configuration with respect to $v_j$, and the whole graph burns. If no such $v_k$ exists, then there is no way to fire any subset of vertices without introducing debt.  This gives $r(D)<2$.
\end{itemize}
Thus, $\gon_2(B_{n,e})=2e$.
\end{proof}

\begin{lemma}
If $n=e$, then $\gon_2(B_{n,e})=2n-1.$
\end{lemma}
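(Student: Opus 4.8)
The plan is to establish the two inequalities $\gon_2(B_{n,n}) \leq 2n-1$ and $\gon_2(B_{n,n}) \geq 2n-1$ separately. The first thing to note is that, unlike the previous two lemmas, the easy upper bound from Lemma \ref{sum} is not sharp here: it only gives $\gon_2(B_{n,n}) \leq 2\gon_1(B_{n,n}) = 2n$, which is one too large. So for the upper bound I must exhibit an explicit rank-$2$ divisor of degree exactly $2n-1$, and for the lower bound I must rule out \emph{all} effective divisors of degree $2n-2$, including ones whose chips are spread across the graph.

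For the upper bound, I would take $D = n\cdot(v_1) + (v_2) + \cdots + (v_n)$, which has degree $2n-1$, and verify directly that $\abs{D-E} \neq \emptyset$ for every effective $E$ of degree $2$. The key mechanism is chip transport: since $v_1$ carries a full packet of $n$ chips, firing the prefixes $\{v_1\}, \{v_1,v_2\}, \{v_1,v_2,v_3\}, \dots$ in succession slides these $n$ chips rightward to any chosen vertex $v_j$ without ever introducing debt, producing a divisor equivalent to $D$ that is $n-1$ at $v_j$ and still $1$ on the remaining vertices $v_2,\dots,v_n$. The subtractions $E = (v_i)+(v_j)$ with $i,j \geq 2$, as well as $E = (v_1)+(v_j)$ and $E = 2(v_1)$, leave $D-E$ immediately effective, while each $E = 2(v_j)$ with $j \geq 2$ is absorbed by transporting the packet to $v_j$. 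This yields $\gon_2(B_{n,n}) \leq 2n-1$.

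For the lower bound I must show that every effective $D$ with $\deg D = 2n-2$ satisfies $r(D) < 2$, i.e.\ admits some effective $E$ of degree $2$ with $\abs{D-E} = \emptyset$. I would first replace $D$ by $\Red_{v_1}(D)$, so that $0 \leq D(v_i) \leq n-1$ for all $i \geq 2$ and all surplus collects on $v_1$. If $D(v_1) \leq 1$, then $E = 2(v_1)$ works, since $D - 2(v_1)$ remains $v_1$-reduced but now has a negative value at $v_1$. Otherwise $D(v_1) \geq 2$, and I would proceed by cases on how the chips are distributed, running Dhar's burning algorithm with respect to a well-chosen endpoint exactly as in the two preceding lemmas. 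The governing principle is a budget constraint: every movement of chips across one of the $n$-edge bundles transports a full packet of $n$ chips at once, and since the total degree $2n-2$ is strictly less than $2n$, at most one such packet can ever be delivered across a fixed cut. Thus a configuration heavy on one side cannot simultaneously service a demand of two chips placed on the two extreme vertices of the other side.

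The main obstacle is the lower bound, and specifically the cross-term subtractions $E = (v_i)+(v_j)$ with $i \neq j$. Because $\deg D = 2n-2$ sits exactly one below $2\gon_1(B_{n,n}) = 2n$, the slack available in the cases $n<e$ and $e<n$ disappears. In particular there are spread-out divisors of degree $2n-2$ --- for instance the $v_1$-reduced divisor $n\cdot(v_1) + 2\cdot(v_2)$ in the case $n=4$ --- for which \emph{every} doubled-vertex subtraction $E = 2(v_j)$ can be answered, yet a suitable cross term such as $E = (v_{n-1}) + (v_n)$ cannot, precisely because the lone available packet of $n$ chips gets stranded one bundle short of the far endpoint. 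Locating, for an arbitrary degree-$(2n-2)$ divisor, a single pair of vertices that witnesses $r(D) < 2$ is therefore the crux, and it is here that the packet/budget bookkeeping must be carried out carefully, case by case, in the style of the previous two lemmas.
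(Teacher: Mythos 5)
Your upper bound is complete and matches the paper's: the divisor $n\cdot(v_1)+(v_2)+\cdots+(v_n)$ has rank $2$ because the packet of $n$ chips on $v_1$ can be slid to any vertex by firing the nested prefixes $\{v_1\},\{v_1,v_2\},\ldots$, and your check of the three shapes of $E$ is exactly the paper's argument. The problem is the lower bound, which is where essentially all the work in this lemma lies, and your proposal stops exactly where that work begins. After normalizing to $\Red_{v_1}(D)$ you correctly dispose of the case $D(v_1)\leq 1$ (indeed $D-2(v_1)$ is still $v_1$-reduced and negative at $v_1$, hence not equivalent to any effective divisor), but for $D(v_1)\geq 2$ --- which covers almost all degree-$(2n-2)$ configurations --- you offer only the heuristic ``packet/budget'' principle and one example, and you explicitly defer the real argument (``must be carried out carefully, case by case''). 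That deferred case analysis \emph{is} the proof: the observation that at most one packet of $n$ chips can cross a fixed bundle does not by itself produce, for an arbitrary $D$ of degree $2n-2$, a degree-$2$ divisor $E$ together with a verification that no debt-free sequence of subset-firings can make $D-E$ effective.

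For comparison, the paper fills this gap by splitting on the number of chip-free vertices. If $\supp(D)=V(B_{n,n})$, then some two vertices carry exactly one chip, every vertex carries at most $n-1$, and running Dhar's algorithm on $D-2(v_i)$ from such a $v_i$ burns the whole graph. If exactly one vertex $v_i$ is empty, at most one vertex $v_j$ can hold $n$ chips; subtract $(v_i)+(v_j)$ if such a $v_j$ exists (else $2(v_i)$), and again the whole graph burns. If two vertices $v_i,v_j$ with $i<j$ are empty, at most one vertex $v_k$ holds $\geq n$ chips; subtract $(v_i)+(v_j)$, transport the single available packet of $n$ chips from $v_k$ to $v_i$, and then burn from $v_j$ --- this is precisely the move that kills your example $n\cdot(v_1)+2\cdot(v_2)$ in $B_{4,4}$ via $E=(v_3)+(v_4)$. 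Your $v_1$-reduced normalization could be made to work along the same lines (the bound $D(v_i)\leq n-1$ for $i\geq 2$ is correct and helpful, and your example correctly identifies why cross terms $(v_i)+(v_j)$ are unavoidable), but until the $D(v_1)\geq 2$ case is actually argued in this manner, the proposal does not establish $\gon_2(B_{n,n})\geq 2n-1$.
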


\begin{proof}
First note that the divisor $n(v_1)+(v_2)+\cdots +(v_n)$ wins the second gonality game. If the opponent were to take away two chips from a vertex that currently has one, then we could fire $v_1$, and then increasingly larger subsets, in order to move the $n$ chips and reach an effective divisor. Now suppose that divisor $D$ wins the second gonality game, with $\deg(D)<2n-1$. We proceed by cases.
\begin{itemize}
\item[(1)] $\supp(D)=V(B_{n,e})$, in which case, there exist at least two vertices, $v_i$ and $v_j$, such that $D(v_i)=1=D(v_j)$.
\item[(2)] There exists some vertex $v_i$ such that $D(v_i)=0$, and $V-\left\{v_i\right\}=\supp(D)$.
\item[(3)] There exist at least two vertices $v_i$ and $v_j$ such that $D(v_i)=0=D(v_j)$, with $i<j$.
\end{itemize}

If we are in case (1), then consider the divisor $D-2(v_i)$. Notice that no vertex has greater than $n-1$ chips, so if we run Dhar's burning algorithm with respect to $v_i$, then the entire graph burns. If we are in case (2), then the maximum number of chips a vertex has is $n$. If no vertex has $n$ chips, then if we consider the divisor $D-2(v_i)$ and burn with respect to $v_i$, the entire graph burns. If a vertex, $v_j$ has $n$ chips, then consider the divisor $D-(v_i)-(v_j)$; now no vertex has at least $n$ chips, so burning with respect to $v_i$ burns the entire graph. Finally, if we are in case (3), then the maximum number of chips a vertex has is $n+1$. If no vertex has at least $n$ chips, then we can consider the divisor $D-2(v_i)$, and burning with respect to $v_i$ burns the entire graph. If a vertex, $v_k$, has at least $n$ chips, then there are no other vertices with at least $n$ chips. We can then consider the divisor $D-(v_i)-(v_j)$. Notice that we can fire subsets of vertices to move $n$ chips to $v_i$; then, $D(v_i)=n-1$ and no vertex has at least $n$ chips. If we then burn with respect to $v_j$, the entire graph burns. Thus, a winning divisor cannot have fewer than $2n-1$ chips, and $\gon(B_{n,e})=2n-1$.

\end{proof}




The graphs $B_{n,e}$ can only yield pairs of first and second gonalities of the form $(a,2a)$ and $(a,2a-1)$ where $a\geq 2$, as well as $(1,2)$ from $B_{2,1}$; in fact, we already had the pair $(a,2a-1)$ from \(K_{a,a}\).  To obtain graphs with lower second gonalities, we construct another family.  For $a\leq b$, let $B^*_{a,b}$ be the generalized banana graph on $a$ vertices $\left\{v_1,\dots,v_a\right\}$ where the number of edges between $v_i$ and $v_{i+1}$ for $1\leq i\leq a-1$ is $b-a+i+1$. In other words, there are $b$ edges between the first pair of vertices starting from $\{v_{a-1},v_a\}$, with each subsequent pair having one fewer edge than the pair before as we move from $v_a$ to $v_1$. 


\begin{figure}[H]
\begin{center}
\includegraphics[scale=0.8]{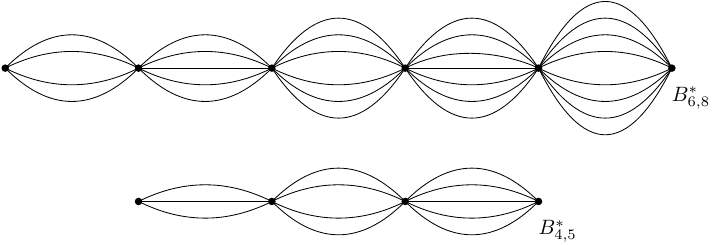}
\caption{The generalized banana graphs $B^*_{6,8}$ and $B^*_{4,5}$.}
\label{fig:banananas}
\end{center}
\end{figure}

\begin{lemma}\label{firstgonbanana}
We have $\gon_1(B^*_{a,b})=a$.
\end{lemma}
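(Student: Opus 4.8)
The plan is to prove the two inequalities $\gon_1(B^*_{a,b})\le a$ and $\gon_1(B^*_{a,b})\ge a$ separately. Throughout I would write $c_i = b-a+i+1$ for the number of edges joining $v_i$ and $v_{i+1}$, so that the multiplicities increase from $c_1 = b-a+2$ up to $c_{a-1}=b$; note that the hypothesis $a\le b$ gives $c_1\ge 2$, which is what drives the whole argument.

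For the upper bound I would exhibit the all-ones divisor $D=v_1+\cdots+v_a$, of degree $a$, and show it has positive rank by checking that it is already $v_j$-reduced for every $j$. Running Dhar's burning algorithm from any $v_j$, every other vertex carries exactly one chip while every edge-multiplicity is at least $c_1\ge 2$; hence as soon as the fire reaches a vertex $v_k$ from an already-burnt neighbor it sees at least $c_1\ge 2 > 1 = D(v_k)$ burning edges and burns. The fire therefore consumes the whole graph, so $D$ is $v_j$-reduced with $\Red_{v_j}(D)(v_j)=1\ge 1$. As this holds for every $j$, we get $r(D)\ge 1$, and so $\gon_1(B^*_{a,b})\le a$.

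For the lower bound I would show that every effective divisor $D$ with $\deg(D)\le a-1$ fails to be able to move a chip to some vertex, so that $r(D)<1$. The key observation is that $\deg(D)\le a-1 < b = c_{a-1}$: since the total number of chips never exceeds $\deg(D)$, no chip can ever cross the $b$-edge cut between $v_{a-1}$ and $v_a$ in either direction, as doing so requires the firing endpoint to hold at least $b$ chips. I would then induct on $a$, the base case being a single vertex, where the only divisor of degree $\le 0$ is the zero divisor, of rank $<1$. If $D(v_a)=0$, then reaching $v_a$ would require delivering a chip across this cut from the left, which is impossible, so $v_a$ is unreachable. If instead $D(v_a)\ge 1$, then because the chips on $v_a$ are immovable the problem of reaching any $v_j$ with $j\le a-1$ is unaffected by $v_a$ and reduces to the identical problem on the sub-banana on $v_1,\dots,v_{a-1}$, which is exactly $B^*_{a-1,b-1}$ carrying the restricted divisor, of degree $\deg(D)-D(v_a)\le a-2=(a-1)-1$. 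The inductive hypothesis then produces an unreachable vertex in this subgraph, and I would argue it remains unreachable in $B^*_{a,b}$.

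I expect the main obstacle to be making rigorous the reduction in this second case, namely that the immovable chips on $v_a$ truly cannot help reach a vertex $v_j$ with $j<a$. The clean way to see this is that in Dhar's burning algorithm started from any such $v_j$, the inequality $D(v_a)\le\deg(D)<b=c_{a-1}$ forces $v_a$ to burn the instant the fire reaches $v_{a-1}$; hence $v_a$ is never part of any set the algorithm fires, so the sequence of firing sets, and therefore the value of the reduced divisor at $v_j$, coincides with what the algorithm produces on $B^*_{a-1,b-1}$ with the restricted divisor. Establishing this equivalence of the two burning processes is the technical heart of the argument; once it is in place the induction closes and the two bounds together give $\gon_1(B^*_{a,b})=a$.
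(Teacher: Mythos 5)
Your proposal is correct in outline, and it takes a genuinely different route from the paper's. The paper's lower bound is a direct, non-inductive count: it takes a positive-rank divisor $D$ of degree less than $a$, passes to its $v_1$-reduced representative, lets $v_i$ be the rightmost vertex with $D(v_i)=0$, and then counts chips: each of $v_{i+1},\dots,v_a$ carries at least one chip (giving $a-i$ chips), while pushing a chip into $v_i$ from the left forces at least $|E(v_{i-1},v_i)| = b-a+i \geq i$ chips to sit on $\{v_1,\dots,v_{i-1}\}$, for a total of at least $a$. Your induction on $a$, peeling off $v_a$ across the heaviest cut $E(v_{a-1},v_a)$ and using that the induced subgraph on $\{v_1,\dots,v_{a-1}\}$ is exactly $B^*_{a-1,b-1}$, is a legitimate alternative; both arguments rest on the same principle, namely that a fired set containing exactly one endpoint of a cut needs at least as many chips on that endpoint as the cut has edges, which exceeds $\deg(D)$. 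Your version trades the paper's one clever choice (the rightmost chipless vertex of a reduced divisor) for the bookkeeping of comparing the burning process on $G$ with that on the subgraph. Your upper bound is the paper's ($\gon_1(G)\le |V(G)|$ via the all-ones divisor), carried out explicitly.

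One intermediate claim in your case $D(v_a)\geq 1$ is false as stated: it is not true that $v_a$ is never part of a fired set. For example, in $B^*_{5,5}$ (edge multiplicities $2,3,4,5$) with $D = 3\cdot(v_3)+(v_5)$, Dhar's algorithm run from $v_1$ stops at $v_3$ and fires $W=\{v_3,v_4,v_5\}$, which contains $v_5=v_a$. What is true, and what your argument actually needs, is that $v_a$ burns if and only if $v_{a-1}$ burns: the only neighbor of $v_a$ is $v_{a-1}$, and once $v_{a-1}$ burns, $v_a$ faces $b > \deg(D) \geq D(v_a)$ burning edges. Hence $v_a$ lies in a fired set only when $v_{a-1}$ does as well, in which case no edge of the cut $E(v_{a-1},v_a)$ joins the fired set to its complement, so no chips cross the cut, and firing $W$ affects $\{v_1,\dots,v_{a-1}\}$ exactly as firing $W\setminus\{v_a\}$ does in $B^*_{a-1,b-1}$. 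With the claim repaired in this way, the two burning processes do coincide on $\{v_1,\dots,v_{a-1}\}$, the reduction to the subgraph goes through, and your induction closes.
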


Our proof mimics that of Lemma 5 in \cite{ccNewtonPolygons}, in which it is proven that $\gon_1(B^*_{a,a})=a$. 

\begin{proof}
As with any graph, we have $\gon_1(B^*_{a,b})\leq |V(B^*_{a,b})|=a$. Suppose there exists a divisor $D$ such that $\deg(D)<a$ and $D$ has positive rank. We can also assume that $D$ is $v_1$-reduced, so $D(v_1)\geq 1$. There exists some other vertex with zero chips on it; let $i$ be the maximal index for which $D(v_i)=0$. We can then perform Dhar's burning algorithm with respect to $v_i$. The chips on the vertices $v_{i+1},\dots,v_m$ do not move, because $D$ is already $v_1$-reduced and fire from $v_1$ would pass through $v_i$. If $m$ is the number of edges between $v_{i-1}$ and $v_{i}$, then there need to be at least $m$ chips on the subgraph induced by the vertices $\left\{v_1,\dots,v_{i-1}\right\}$ since the entire graph will not burn as $D$ has positive rank. However, $m\geq i$, and $D(v_{i+1}),\dots,D(v_a)\geq 1$, so $\deg(D)=m+(a-i)\geq a$, a contradiction.
\end{proof}

\begin{lemma}\label{bananapairs}
Let $2\leq a\leq b\leq 2a-1$. We have $\gon_2(B^*_{a,b})=b+1$.
\end{lemma}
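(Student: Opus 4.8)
The plan is to prove the two inequalities $\gon_2(B^*_{a,b}) \leq b+1$ and $\gon_2(B^*_{a,b}) \geq b+1$ separately. For the upper bound, I would exhibit an explicit rank $2$ divisor of degree $b+1$. A natural candidate is the divisor $D$ that places the maximal number of chips on the end where the edges are densest; recall that by the construction of $B^*_{a,b}$, the pair $\{v_{a-1},v_a\}$ is joined by $b$ edges. I would try $D = b\cdot(v_a) + (v_1)$, or more symmetrically a divisor concentrating $b$ chips near $v_a$ together with one extra chip, and verify directly that no matter which two chips an opponent removes, one can fire subsets to recover an effective divisor. Concretely, since there are $b$ edges from $v_{a-1}$ to $v_a$, a pile of $b$ chips on $v_a$ can be moved leftward through the graph analogously to how a pile of $e$ chips moves in $B_{n,e}$; the extra single chip handles the case where debt is created at an already-empty vertex. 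The bookkeeping here mirrors the winning-divisor arguments in the $B_{n,e}$ lemmas above.

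For the lower bound, which I expect to be the main obstacle, I would assume for contradiction that some effective divisor $D$ of degree at most $b$ has rank $2$, and derive a contradiction by exhibiting a debt the opponent can impose that cannot be cleared. The key structural feature is that $B^*_{a,b}$ has a descending number of edges: the ``bottleneck'' between $v_1$ and $v_2$ has only $b-a+2$ edges, while the densest bottleneck between $v_{a-1}$ and $v_a$ has $b$ edges. I would run Dhar's burning algorithm after removing two chips, choosing the burning vertex and the two removed chips to exploit the narrow left end. The idea is that to protect the left portion of the graph one needs a certain number of chips pinned down by the small cut there, analogous to the counting argument in Lemma \ref{firstgonbanana}, where the minimality forced $\deg(D) \geq a$ via the inequality $m \geq i$ relating the cut size to the index. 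Here I would expect a refined count showing that protecting against the removal of two chips requires one more chip than the single-chip game, pushing the required degree up to $b+1$.

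The heart of the lower-bound argument is to case on the support and the chip distribution of a putative degree-$b$ rank-$2$ divisor, just as in the three $B_{n,e}$ lemmas, but now tracking the \emph{varying} cut sizes $b-a+i+1$ across the graph. I would reduce $D$ to a $v_i$-reduced form for a cleverly chosen $i$, then argue that after the opponent subtracts two chips there is always a set $A$ with $D(v) \geq \outdeg_A(v)$ failing to hold everywhere, so Dhar's algorithm burns the whole graph and no effective representative exists. The main obstacle is handling the cases where chips are concentrated at the dense end: there a pile can be pushed a long way across the graph, so I must confirm that even allowing such long-range movement, $b$ chips cannot simultaneously answer both removed chips. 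I anticipate that comparing the total chip count $b$ against the sum of the relevant cut thresholds forces a deficit, giving the contradiction.

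Throughout, I would lean on the machinery already established: the equivalence between positive rank and reducedness, the subset-firing formalism, and the termination/correctness of Dhar's burning algorithm, all stated earlier. I would also reuse the observation from the $B_{n,e}$ analysis that a vertex holding exactly the number of chips equal to an incident cut can transmit a pile across that cut, while a vertex with fewer chips than every incident cut is ``stuck'' and burns immediately. Assembling these, the final step is to collect the upper and lower bounds into the claimed equality $\gon_2(B^*_{a,b}) = b+1$.
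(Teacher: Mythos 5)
There is a genuine gap, and in fact both halves of your plan would fail as stated. For the upper bound, your candidate divisor $D = b\cdot(v_a) + (v_1)$ does not have rank $2$ in general. Take $a=b=3$, so $B^*_{3,3}$ has $2$ edges between $v_1,v_2$ and $3$ edges between $v_2,v_3$, and consider $D = 3\cdot(v_3)+(v_1)$. If the opponent removes $(v_2)+(v_3)$, the resulting divisor $(v_1)-(v_2)+2\cdot(v_3)$ is equivalent to no effective divisor: $v_3$ holds $2<3$ chips so it cannot fire toward $v_2$, the stranded chip on $v_1$ cannot move (firing $v_1$ needs $b-a+2=2$ chips), and Dhar's algorithm started from $v_2$ burns the whole graph. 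The problem is exactly the splitting you propose: after absorbing one hit at $v_a$, a pile of $b-1$ chips there is frozen, and the lone far-away chip cannot help. The paper instead uses $(b+1)\cdot(v_a)$, keeping all chips together so that $v_a$ can still fire after losing a chip; one then checks it is equivalent to $(b+k-a)\cdot(v_k)+\sum_{k\leq i\leq a}(v_i)$ for every $k$, which dominates any effective divisor of degree $2$.

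For the lower bound, your strategy targets the wrong end of the graph. The narrow cut between $v_1$ and $v_2$ (size $b-a+2$, which can be as small as $2$) is cheap to defend: the degree-$b$ divisor $b\cdot(v_a)$ survives \emph{every} attack concentrated near $v_1$, since firing $v_a$ and then nested subsets marches the pile leftward, losing one chip per cut and arriving at $v_1$ with $b-a+2\geq 2$ chips. So an argument exploiting only the left end cannot rule out degree-$b$ divisors. The actual obstruction is the \emph{large} cut of $b$ edges at the right end: to push even one chip into $v_a$ one must fire a set $W$ with $v_{a-1}\in W$, $v_a\notin W$, which forces $\outdeg_W(v_{a-1})\geq b$ and hence $b$ chips on $v_{a-1}$ — unavailable to a divisor of degree at most $b$ that has already committed chips elsewhere. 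Accordingly, the paper's proof is a case analysis on $D(v_a)$: if $D(v_a)\leq 1$ the opponent attacks $v_a$ and the debt is permanent; if $D(v_a)=\deg(D)$ the opponent removes $(v_{a-1})+(v_a)$ and the whole graph burns; and in the intermediate range $2\leq D(v_a)\leq \deg(D)-1$ the chips on $v_a$ are stuck, so the game restricts to the induced subgraph $B^*_{a-1,b-1}$ and one inducts on $a$. Note also that your guiding heuristic — that the two-chip game needs one more chip than the one-chip game — would predict $a+1$, not $b+1$; these disagree whenever $b>a$, which is another sign the left-end analogy with Lemma \ref{firstgonbanana} is not the right mechanism here.
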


\begin{proof}
First notice that the divisor $(b+1)\cdot v_a$ has rank at least $2$: for any $k$ with $1\leq k\leq a$, this divisor is equivalent to $(b+k-a)\cdot v_k+\sum_{k\leq i\leq a}(v_k)$.  Allowing $k$ to vary, we find effective divisors greater than any given effective divisor of degree $2$. This means that $\gon_2(B^*_{a,b})\leq b+1$. Now suppose that a divisor $D\in\textrm{Div}_+^k(B_{a,b}^*)$ has rank at least $2$, where $k\leq b$. We proceed by cases.
\begin{itemize}
    \item[(1)]If $D(v_a)=1$, we can consider $D-2\cdot(v_a)$.  This divisor has $-1$ chips on $v_a$ and $k-1\leq b-1$ chips on the other vertices; no chips can move from $v_a$ to the rest of the graph without introducing debt, so this remains true as we try to eliminate debt via subset firing moves.  The only way to eliminate the debt on $v_a$ would be to fire a subset $W$ with $v_{a-1}\in W$, but this is only possible to do without introducing new debt if there are at least $b$ chips on $v_{a-1}$, a contradiction.
    
    \item[(2)]If $D(v_a)=0$, choose $v_i\in\textrm{supp}(D)$, and consider $D-(v_i)-(v_a)$.  The same argument from case (1) shows that the debt on $v_a$ cannot be eliminated, a contradiction.
    
    \item[(3)]If $D(v_a)=k$, then running Dhar's burning algorithm on the divisor $D-(v_{a-1})-(v_a)$ with respect to $v_{a-1}$ burns the entire graph, a contradiction.
    
    \item[(4)]If $D(v_a)=\ell$, $2\leq\ell\leq k-1$, we induct on $a$ to show that $D$ cannot win the second gonality game. As a base case, consider $B^*_{3,b}$, $3\leq b\leq 5$.  We might as well place only 2 chips on $v_3$, because placing any more does not allow us to chip fire from $v_3$ anyways. We can then consider the second gonality of the subgraph induced by $\left\{v_1,v_2\right\}$, which is the (usual) banana graph $B_{b-1}$. Since $\textrm{gon}_1(B_{b-1})=2$, we have $\gon_2(B_2)=3$ and $\gon_2(B_3)=4=\gon_2(B_4)$. Thus, no divisor $D$ with degree $\deg(D)\leq b$ can win the second gonality game if $2\leq D(v_3)\leq b-1$.
    
    Now suppose that for $B^*_{a,b}$, if a divisor $D$ has degree $\deg(D)=k\leq b$ and $2\leq D(v_a)\leq b-1$, then $D$ cannot win the second gonality game. Consider the graph $B^*_{a+1,b'}$ with $a+1\leq b'\leq 2a+1$, and suppose $D'$ is a  divisor with degree $\deg(D')=k\leq b'$ with rank at least $2$. Again, we can assume that $D'(v_{a+1})=2$. The subgraph $G'$ induced by the vertices $\left\{v_1,\dots,v_a\right\}$ is then $B^*_{a,b'-1}$, where $a\leq b'-1\leq 2a-1$.
    Restricting $D'$ to $G'$, we have  $\deg(D')=k-2\leq b-2\leq 2a-1$. From the cases above, as well as the inductive hypothesis, we know that $D'$ cannot win the second gonality game on $G'$. Thus, $D$ cannot win the second gonality game on $B^*_{a+1,b'}$.
\end{itemize}
Therefore, $\gon_2(B^*_{a,b})=b+1$.
\end{proof}

We can now prove that any reasonable pair of first two gonalities is achieved by some graph.

\begin{proof}[Proof of Theorem \ref{theorem:first_two}]  Let $m,n\in \mathbb{Z}$ with $m< n\leq 2n$.
To achieve the first and second gonality pair $(m,n)$, we can consider the graph $B^*_{m,n-1}$, which is well-defined since $m\leq n-1$. From Lemma \ref{firstgonbanana}, $\gon_1(B^*_{m,n-1})=m$. Notice that if $m+1\leq n\leq 2m$, then $m\leq n-1\leq 2m-1$, which are exactly our bounds for $a,b$ in Lemma~\ref{bananapairs}. Thus, given $(m,n)$ within our constraints, the graph $G=B^*_{m,n-1}$ has first gonality $\gon_1(G)=m$ and second gonality $\gon_2(G)=n$.
\end{proof}

We do not easily arrive at a corresponding result for the first three gonalities of a graph.  If $\ell=\gon_1(G)$, $m=\gon_2(G)$, and $n=\gon_3(G)$, then we certainly have $\ell<m<n$, $m\leq 2\ell$, and $n\leq \ell+m$.   However, not all triples $(\ell,m,n)$ satisfying these constraints are the first three gonalities of a graph; for instance,  due to  Proposition \ref{prop:hyperelliptic}, there exists no graph $G$ with $\gon_1(G)=2$, $\gon_2(G)=3$, and $\gon_2(G)=5$.  Determining which triples can be the first three gonalities of a graph seems to be an open question, although answering Question \ref{question:adding_graphs} might help.  For instance, an affirmative answer to that question would imply the triple $(3,5,8)$ to be impossible.  It could also be helpful to study the \emph{Clifford index} of graphs, defined as
\[\textrm{Cliff}(G):=\min\{\deg(D)-2r(D)\,|\, r(D)\geq 1\textrm{ and }\deg(D)\leq g-1\}.\]
As shown in \cite{cliffords_alg}, the Clifford index of an algebraic curve $C$ is always either $\gon_1(C)-2$ or $\gon_1(C)-3$; as noted in \cite{random_graphs}, it is unknown whether the Clifford index of graphs follows the same behavior.  If it does, then we could not have a graph with first three gonalities of the form $(a,a+1,a+2)$ if the graph has genus $g$ at least $a+3$, since then we would have $\textrm{Cliff}(G)\leq \gon_3(G)-2\cdot 3=a+2-6=a-4=\gon_1(G)-4$.

Another open question is the following:  given $m$ and $n$ as in Theorem \ref{theorem:first_two}, what are the possibilities for the genus $g$ of a graph $G$ with $\gon_1(G)=m$ and $\gon_2(G)=n$? In particular, when are there infinitely many possibilities for $g$? For instance, if $(m,n)=(2,3)$, then $g$ must be $1$; but if $(m,n)=(2,4)$, then $g$ can be any integer satisfying $g\geq 2$.  We could then ask for the minimum genus of a graph achieving a prescribed pair of first and second gonalities.  Since \(g(B^*_{a,b})=(a-1)\left(b-\frac{a}{2}\right)\), we can certainly achieve the pair \((a,b+1)\) with this genus.  However, this is not always the lowest possible genus.  For example, to achieve \((m,2m-1)\), we could use \(B^*_{m,2m-2}\) with genus \((m-1)(\frac{3}{2}m-2)\); or we could use either \(B_{m,m}\) or \(K_{m,m}\), both of which have a smaller genus of \((m-1)^2\).  (We remark that to achieve \((m,m+1)\) we could use either \(B^*_{m,m}\) or \(K_{m+1}\), both of which have genus \(\frac{m(m-1)}{2}\).)

An interesting direction for future work would be to also ask similar questions for algebraic curves; for instance, given \(m\) and \(n\) satisfying reasonable assumptions, what sorts of algebraic curves can we find with first gonality \(m\) and second gonality \(n\)?  A nice starting point for this exploration could be in the setting of plane curves.  For instance, let \(P\) be the triangle with vertices at \((0,0)\), \((n,0)\), and \((0,m)\) (where \(2\leq m< n\leq 2m\)),  let \(f(x,y)\) have support in \(P\), and let \(C\) be the curve defined by \(f\).  Then generically, it is known by \cite[Corollary 6.2]{linear_pencils} that \(\textrm{gon}_1(C)=m\); and it is conjectured in \cite[Conjecture 7.1]{lattice_size} that \(\textrm{gon}_2(C)=n\) if we only consider divisors whose corresponding divisorial map is birational onto its image.  (The first of these relies on the lattice width of \(P\) \cite[Theorem 4]{ccNewtonPolygons}, and the second on the lattice size of \(P\) with respect to the standard lattice triangle \cite[Theorem 3.5]{lattice_size}.)

\section{A Modified Burning Algorithm}
\label{section:modified_dhars}

In this section, we present a modified version of the iterated Dhar's burning algorithm suited for checking if a divisor has higher gonality at least $r$.  Although this can be accomplished using the traditional Dhar's algorithm with the same time complexity, our algorithm is more transparent when determining gonalities beyond the first, and so may be useful for proofs relying on the burning algorithm.  

Recall that our method for computing the first gonality of a graph relies on being able to check whether the rank of a divisor $D$ is at least $1$.  The burning algorithm also provides a method for checking if the rank of $D$ is at least $r$.
\begin{enumerate}
\item For each of the finitely many $E \in \Div_+^r(G)$, take $D - E$.
\item Choose any $v \in V(G)$, and semi-reduce $D-E$ with respect to $v$.  Then, use the burning algorithm to compute $\Red_v(D-E)$, and check if it is effective.  If so, continue with more choices of $E$; if not, $r(D) < r$.
\end{enumerate}
To check if $\gon_r(G) > k$, run the above two steps on every effective divisor $D$ of degree $k$.

Our modified algorithm (Algorithm \ref{algorithm:modified_dhars}) provides an alternative to step 2; this algorithm is recursive, calling itself with \(\textbf{Alg}\).  Rather than semi-reduction followed by a repeated burning process to reach a reduced divisor, it consists simply of a repeated burning process that determines whether or not the given divisor is equivalent to an effective divisor.  It is thus one of many algorithms that solves the \emph{dollar game}, which asks: given a divisor $D$, is there effective divisor $D'$ with $D\sim D'$?  For other algorithms solving the dollar game, see \cite[Chapter 3]{cp}.

\begin{algorithm}
\caption{Modified Dhar's Burning Algorithm}
\label{algorithm:modified_dhars}
\begin{algorithmic}
	\Require A divisor $D=D^+-D^-$, where $D^+,D^-\geq 0$.
	\Ensure A divisor $D' \in \abs{D}$ satisfying $D' \geq 0$, or \textsc{None} if none exists.
	\If{$D\geq0$}
	\State \Return $D$
    \EndIf
	\State $W \coloneqq V(G) \setminus\supp(D^-)$
	\While{$W \neq \emptyset$}
	    \If{$D(v) < \outdeg_W(v)$ for some $v \in V(G)$}
	    \State $W = W\setminus \{v\}$ \Comment{$v$ burns}
	    \Else
	        \State return $\textbf{Alg}(D - \Delta \mathbbm{1}_{W})$
        \EndIf
	\EndWhile
	\Return \textsc{None} \Comment{entire graph burned}
\end{algorithmic}
\end{algorithm}

We offer the following intuitive explanation of Algorithm \ref{algorithm:modified_dhars}.  Given $D=D^+-D^-$ where $D^+\geq 0$ and $D^->0$, set all vertices of $\supp(D^-)$ on fire.  Let the usual burning process propagate through the graph.  If the whole graph burns, then $D$ is not equivalent to any effective $D'$.  If the whole graph does not burn, fire the unburned vertices $W$, and run the process again.  We refer to each time we run through this burning process a \emph{pass} through our algorithm or an \emph{iteration} of our algorithm.

\begin{example}\label{example:modified_dhars}
As an example of running Algorithm \ref{algorithm:modified_dhars} on a divisor, consider the graph $G$ at the top of Figure \ref{figure:dhars_example}.  Let $D=-(a)+2\cdot(c)+7\cdot(f)-(g)-2\cdot(h)$; this divisor is illustrated on the bottom left.  We can write $D=D^+-D^-$, where $D^+=2\cdot(c)+7\cdot(f)$ and $D^-=(a)+(g)+2\cdot(h)$.  In accordance with Algorithm \ref{algorithm:modified_dhars}, we set the vertex set $\textrm{supp}(D^-)=\{a,g,h\}$ on fire.  The fire spreads until the whole graph burns except for $f$, so we fire $f$ to obtain the next divisor $D'=-(a)+(b)+2\cdot(c)+(d)+(e)+2\cdot(f)-(h)$.  We then set the vertices $a$ and $h$ on fire.  This time the vertices $b,c,d,e,$ and $f$ remain unburned, so these vertices are all fired, giving the divisor $D''=2\cdot(a)+2\cdot(g)+(h)$.  Since debt has been eliminated, the algorithm terminates.  If the whole graph had burned before debt was eliminated, we would have known that eliminating debt were impossible.

\begin{figure}[hbt]
    \centering
    \includegraphics[scale=1]{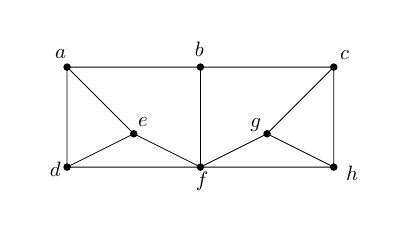}
    \\\includegraphics[scale=0.7]{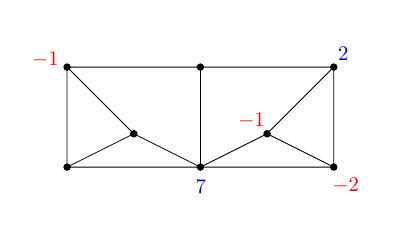}\qquad\includegraphics[scale=0.7]{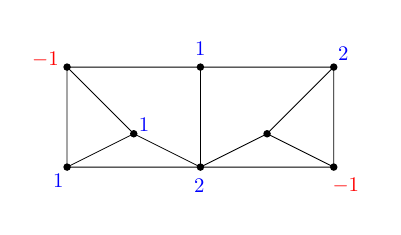}\qquad\includegraphics[scale=0.7]{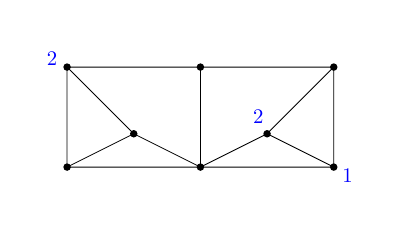}
    \caption{The graph $G$ and the steps of Algorithm \ref{algorithm:modified_dhars} from Example \ref{example:modified_dhars}}
    \label{figure:dhars_example}
\end{figure}

\end{example}

We need to argue that this algorithm terminates and that it is correct.  For the termination argument, we will use the $\beta_q(D)$ notation from Subsection \ref{subsection:usual_dhars}.  Recall that
\[\beta_q(D) = \paren{\sum_{v \in S_0} D(v), \sum_{v \in S_1} D(v),\dots, \sum_{v \in S_d} D(v)},\]
where $d=\textrm{diam}(G)$ and $S_i$ is the set of all vertices at distance $i$ from vertex $q$.  Thus $\beta_q(D)$ records in its $i^{th}$ component the number of chips at distance $i$ from $q$.

\begin{proposition}\label{prop:terminates}
Algorithm \ref{algorithm:modified_dhars} terminates.
\end{proposition}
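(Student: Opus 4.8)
The plan is to mimic the termination argument already given for the iterated Dhar's burning algorithm (Algorithm \ref{originaldhars}) in Subsection \ref{subsection:usual_dhars}, adapting it to the modified setting. The key idea is to show that each recursive call strictly increases the potential function $\beta_q(D)$ in the lexicographic order, where $q$ is chosen to be any fixed vertex in $\supp(D^-)$. Since $\beta_q(D)$ is bounded for divisors of fixed degree, no infinite strictly increasing sequence can exist, forcing termination.

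First I would fix a vertex $q \in \supp(D^-)$ at the outset; note that $q$ is never in the fired set $W$, since every pass initializes $W = V(G) \setminus \supp(D^-)$ and then only removes vertices, so $q$ burns immediately and stays burned. Next I would index the consecutive divisors produced by the recursive calls as $D = D_0, D_1, D_2, \dots$ with $D_{i+1} = D_i - \Delta \mathbbm{1}_{W_i}$, where $W_i$ is the (nonempty) unburned set fired on the $i^{\text{th}}$ pass. The central claim is that $\beta_q(D_{i+1}) > \beta_q(D_i)$ lexicographically whenever $W_i \neq \emptyset$. To see this, I would pick $w \in W_i$ minimizing $d(q,w)$; since $q \notin W_i$ (indeed $q$ burned), such a $w$ has distance at least $1$ from $q$, and $w$ has a neighbor $u$ with $d(q,u) = d(q,w) - 1 < d(q,w)$ and $u \notin W_i$. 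Firing $W_i$ sends chips across the edges between $w$ and $u$, moving chips strictly closer to $q$; combined with the minimality of $d(q,w)$, every component of $\beta_q$ indexed below $d(q,w)$ is unchanged while the component at index $d(q,w)-1$ strictly increases, giving the lexicographic increase.

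The one point requiring slight care—and the main obstacle relative to the original argument—is that the modified algorithm starts burning from all of $\supp(D^-)$ rather than from a single vertex $q$, so I must confirm the potential argument is insensitive to this choice. This is handled by fixing any single $q \in \supp(D^-)$ and tracking $\beta_q$ with respect to that $q$ alone; since all vertices of $\supp(D^-)$ burn on every pass, none of them ever lies in any $W_i$, so the minimal-distance vertex $w \in W_i$ genuinely sits at positive distance from $q$ and the neighbor $u$ closer to $q$ exists as claimed. Finally, I would invoke the boundedness of $\beta_q$: each component lies between $\min\{0, D(q)\}$ and $\max\{\deg(D) + D(q), \deg(D)\}$, exactly as in the earlier argument, since $\deg$ and $D(q)$ are preserved and bounded across all $D_i$ (the degree is a linear-equivalence invariant). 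An infinite lexicographically strictly increasing sequence of integer vectors drawn from a fixed bounded box is impossible, so the algorithm must terminate after finitely many passes.
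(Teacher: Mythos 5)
Your overall strategy (a lexicographic potential bounded in a finite box) is the same as the paper's, but there is a genuine gap at the crucial step: you fix $q \in \supp(D^-)$ for the \emph{initial} divisor $D$ and claim $q$ is never in any fired set $W_i$. This is false. On each pass the set $W$ is initialized to $V(G) \setminus \supp(D_i^-)$ where $D_i$ is the \emph{current} divisor, and the negative supports shrink over time: a vertex that starts in debt can be brought out of debt by chips arriving from fired neighbors (this is the whole purpose of the algorithm), after which it is no longer set on fire at the start of a pass and may perfectly well be fired later. Concretely, on the path with vertices $a,b,c,d$ and divisor $D = 3(a) - (b) - (d)$, the vertex $b$ is out of debt after the first pass, and on the third pass the algorithm fires the set $\{a,b\}$; one checks that $\beta_b$ then strictly \emph{decreases} lexicographically, from $(1,1,-1,0)$ to $(0,2,-1,0)$. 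So the claimed monotonicity of $\beta_q$ for a $q$ chosen from the initial debt support simply fails, and the contradiction never materializes.

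The paper closes exactly this gap with a stabilization step before running the potential argument. Assuming a non-terminating run, it first observes that once a vertex is out of debt it stays out of debt (vertices in $W_i$ have enough chips to fire without entering debt; vertices outside $W_i$ only gain chips), so the debt supports $T_i = \supp(D_i^-)$ form a weakly decreasing chain of subsets of the finite set $V(G)$ and hence stabilize at some $T = T_k$, which is nonempty since otherwise the algorithm would have returned $D_k$. Replacing $D$ by $D_k$, one may assume the debt support equals the \emph{same} nonempty $T$ on every pass; only then is it legitimate to pick $q \in T$ and argue as you do, since such a $q$ is genuinely never fired (the paper runs the argument simultaneously for all $u \in T$ via the tuples $(\beta_u(D_i))_{u \in T}$, but a single $u \in T$ would suffice). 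A secondary, smaller issue: your bounding box quotes the bound from Subsection \ref{subsection:usual_dhars}, which was derived under the assumption that only the single vertex $q$ can ever be in debt and that $D_i(q)$ never changes; in the modified setting several vertices carry debt and $D_i(q)$ varies, so the correct bounds are in terms of $\deg(D^+)$ and $\deg(D^-)$ as in the paper. That part is cosmetic, but the stabilization step is essential and cannot be skipped.
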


\begin{proof}
Suppose for the sake of contradiction that the algorithm does not terminate on an input $D$.   Since the output of \textsc{None} is never returned, the algorithm is repeatedly firing subsets $W_0,W_1,W_2,\ldots$, giving an infinite sequence of equivalent divisors $D=D_0,D_1,D_2,\ldots$ where $D_{i+1} = D_i - \Delta \mathbbm{1}_{W_i}$.

Note that if $v\in W_i$, then $D_i(v)\geq 0$: otherwise,  $v$ would have been burned from the beginning.  Also note that if $v\in W_i$, then $\outdeg_{W_i}(v)\leq D_i(v)$, meaning that $D_{i+1}(v)\geq 0$. Thus, $D_i(v) \geq 0$ implies $D_j(v) \geq 0$ for all $j \geq i$.  Letting $T_i=\supp(D_i^-)$, this means we have
\[T_0\supseteq T_1\supseteq T_2\supseteq\cdots.\]
Since the $T_i$'s are all subsets of $V(G)$, this sequence must eventually stabilize at some index $k\geq 0$:
\[T_0\supset \cdots\supset T_k=T_{k+1}=T_{k+2}=\cdots\]
Let $T=T_k$.  We know $T\neq \emptyset$:  otherwise, the algorithm would return $D_k$.

We can assume without loss of generality that $k=0$; if not, then we can replace $D$ with $D_k$ and still have an input on which the algorithm fails to terminate.  Thus we have $\supp(D_i^-)=T$ for all $i$.

Consider the $|T|$-tuple of $(d+1)$-tuples $\left(\beta_u(D)\right)_{u \in T}$, which we can think of as a $|T|(d+1)$-tuple.  We can provide a partial order on such tuples by setting $(\beta_u(D'))_{u\in T} \geq (\beta_u(D))_{u\in T}$ if for all $u \in T$, we have $\beta_u(D') \geq \beta_u(D)$, where the latter is again taken with respect to the lexicographic ordering.  We claim that
\[(\beta_u(D_{i+1}))_{{u\in T}} > (\beta_u(D_i))_{{u\in T}} \quad \text{for all $i \geq 0$.}\]  
To argue this, it suffices to show that $\beta_u(D_{i+1}) > \beta_u(D_i)$ lexicographically for all $u\in T$. We can now replicate the argument from the usual Dhar's algorithm.  In particular, given $u\in T$, pick $w \in W_i$ such that $d(u, w)$ is minimized. Then $w$ has a neighbor $v$ such that $d(u, v) < d(u, w)$ and $v \notin W_i$ (where we might have $v = u$). This directly corresponds to $\beta_u(D_{i+1})$ being strictly greater than $\beta_u(D_i)$ with respect to the lexicographic ordering.

Thus we have an infinite strictly increasing sequence
\[(\beta_u(D_{0}))_{{u\in T}}<(\beta_u(D_{1}))_{{u\in T}}<(\beta_u(D_{2}))_{{u\in T}}<\cdots\]
of $|T|(d+1)$-tuples of integers.  Recall that each integer $\ell$ appearing in a tuple is the sum of the number of chips at a fixed distance from a vertex.  Since no new debt is introduced in the graph, we have $-\deg(D^-)\leq\ell\leq \deg(D^+)$.  Only finitely many $|T| (d+1)$-tuples of integers satisfy these bounds, a contradiction to the existence of an infinite sequence.  We conclude that the algorithm must terminate.
\end{proof}

Before we prove our algorithm is correct, we recall the following notation from \cite{db}. Let $D$ and $D'$ be equivalent divisors, with $D'=D-\Delta f$ for some function $f:V(G)\rightarrow\mathbb{Z}$.  Define $m=\max\{f(v)\,|\, v\in V(G)\}$ and $k=m-\min\{f(v)\,|\,v\in V(G)\}$.  The \emph{level set decomposition} of $f$ is the sequence of sets $A_0\subset A_1\subset \cdots\subset A_k=V(G)$ given by \[A_i=\{v\in V(G)\,|\, f(v)\geq m-i\}.\]
Thus, $A_i$ contains $v$ if and only if $f$ prescribes firing $v$ at least $m-i$ times.
The sequence of divisors $D_0,D_1,\ldots,D_k\in \abs{D}$ given by $D_0=D$ and $D_{i+1}=D_i-\Delta\one_{A_i}$ for all $i\in\{0,1,\ldots,k-1\}$ is called the \emph{divisor sequence associated with the level set decomposition}.  There is actually no harm in assuming \(k=m\):  if \(\min\{f(v)\,|\,v\in V(G)\}\geq 1\), redefine \(f\) to be \(f-\min\{f(v)\,|\,v\in V(G)\}\cdot \mathbbm{1}_{V(G)}\).  Since \(\Delta\mathbbm{1}_{V(G)}=0\), we still have  $D'=D-\Delta f$.  In this case, we then have that \(f=\sum_{i=1}^k \mathbbm{1}_{A_i}\), highlighting that we are indeed decomposing~\(f\).

\begin{proposition}\label{prop:correct}
Algorithm \ref{algorithm:modified_dhars} is correct.
\end{proposition}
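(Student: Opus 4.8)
The plan is to split correctness into two claims: \emph{soundness} (any divisor the algorithm outputs lies in $\abs{D}$ and is effective) and \emph{completeness} (the algorithm outputs \textsc{None} only when $\abs{D}=\emptyset$). Soundness is immediate: every recursive call replaces the current divisor by $D-\Delta\one_W$, which is linearly equivalent to it, so the running divisor always stays in the linear equivalence class of the original input; and a divisor is returned only through the opening test $D\geq 0$, which forces effectiveness. Hence all of the content lies in completeness, which I would prove in contrapositive form: if $\abs{D}\neq\emptyset$, then the whole graph never burns on any pass, so \textsc{None} is never returned.

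The heart of the argument is the following claim: if a divisor $D$ has $\supp(D^-)\neq\emptyset$ but is equivalent to some effective $D'$, then the burning process begun from $\supp(D^-)$ leaves a nonempty unburned set. To prove it, write $D'=D-\Delta f$ and, normalizing so that $\min f=0$, form the level set decomposition $A_0\subset\cdots\subset A_k=V(G)$, where $A_0=\set{v : f(v)=\max f}$ is the first set fired. For $v\in A_0$ I compute
\[
\Delta f(v)=\sum_{u}\abs{E(v,u)}\paren{f(v)-f(u)}=\sum_{u\notin A_0}\abs{E(v,u)}\paren{\max f-f(u)}\geq\outdeg_{A_0}(v),
\]
using that $\max f-f(u)\geq 1$ for $u\notin A_0$. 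Since $D'$ is effective we have $\Delta f(v)\leq D(v)$, so $D(v)\geq\outdeg_{A_0}(v)$ for every $v\in A_0$. In particular $D(v)\geq 0$ on $A_0$, so $A_0\cap\supp(D^-)=\emptyset$ and $A_0$ is contained in the initial unburned set.

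It then remains to check that $A_0$ never burns, which I would do by induction on the burning steps: whenever the current unburned set $W$ satisfies $W\supseteq A_0$, monotonicity of outdegree gives $\outdeg_W(v)\leq\outdeg_{A_0}(v)\leq D(v)$ for each $v\in A_0$, so the burning condition $D(v)<\outdeg_W(v)$ fails and no vertex of $A_0$ catches fire. Because $D$ carries debt, $f$ is nonconstant, so $A_0$ is a proper nonempty subset; thus the unburned set at the end of the pass contains $A_0$ and is nonempty, and the algorithm fires and recurses rather than returning \textsc{None}. To assemble the pieces, I note that every iterate is linearly equivalent to the input $D$, so if $\abs{D}\neq\emptyset$ the claim applies on every pass and \textsc{None} is never returned; by Proposition \ref{prop:terminates} the algorithm halts, and it must therefore exit through the effective base case. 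Together with soundness this yields correctness.

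The step I expect to demand the most care is the key claim, specifically the passage from the mere existence of an effective equivalent $D'$ to the concrete, burn-resistant set $A_0$. Choosing the right set (the top level set of the transfer function $f$, i.e.\ the most-fired vertices) and establishing the Laplacian inequality $\Delta f(v)\geq\outdeg_{A_0}(v)$ on $A_0$ are what make the fire provably stop; getting the direction of $f$ and its normalization correct is the delicate point, while the surrounding bookkeeping (invariance under firing, the induction over burning steps, and the appeal to termination) is routine.
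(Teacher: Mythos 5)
Your proof is correct, and its skeleton matches the paper's: soundness is dispatched by noting that every recursive call subtracts $\Delta\one_W$, and completeness is proved contrapositively by exhibiting, for any in-debt divisor equivalent to an effective one, a nonempty set that survives the burning pass --- namely the top level set $A_0$ of the level set decomposition, which is exactly the set the paper uses. The difference lies in how the crucial inequality $D(v)\geq\outdeg_{A_0}(v)$ for $v\in A_0$ is obtained. The paper imports it from van Dobben de Bruyn's result (\cite[Theorem 3.10]{db}, which gives $D_i(v)\geq\min\{D(v),D'(v)\}$ along the divisor sequence of the decomposition): it first argues $A_0\cap\supp(D^-)=\emptyset$ by observing that a vertex fired at every step cannot gain chips, and then applies the cited theorem to the first firing move. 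You instead derive both facts at once from the elementary Laplacian estimate $\Delta f(v)\geq\outdeg_{A_0}(v)$ on the top level set, combined with effectiveness of $D'=D-\Delta f$; this makes the proof self-contained (you use nothing from \cite{db} beyond the definition of the level sets, and in fact you only need $A_0$, not the whole divisor sequence), giving $D(v)\geq\outdeg_{A_0}(v)\geq 0$ on $A_0$ in one stroke. Incidentally, your computation produces the inequality in the orientation the burning argument actually requires --- the paper's text states it reversed, as ``$\outdeg_{A_0}(v')\geq D(v')$'', evidently a typo --- and you also make explicit the monotonicity induction ($W\supseteq A_0$ implies $\outdeg_W(v)\leq\outdeg_{A_0}(v)$ for $v\in A_0$) that the paper leaves implicit. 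What the paper's route buys is brevity by leaning on known machinery; what yours buys is a standalone argument whose only prerequisites are the definitions of the Laplacian and of the level set decomposition.
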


\begin{proof}
If Algorithm  \ref{algorithm:modified_dhars} terminates because at some point $D\geq 0$, then we have found a sequence of chip-firing moves (namely the sequence of moves given by firing the subset $W$ produced by each pass through the algorithm) that transforms the initial divisor into an equivalent effective divisor.  Thus the algorithm is correct if it returns a divisor.

We now wish to argue that if  Algorithm \ref{algorithm:modified_dhars} terminates with \textsc{None}, then it is correct. Let $D$ and $D'$ be equivalent divisors, say with $D'=D-\Delta f$, where $D$ is not effective and $D'$ is.  Let $A_0\subset A_1\subset \cdots\subset A_k$ be the level set decomposition of \(f\) and $D=D_0,D_1,\ldots,D_k=D'$ the associated divisor sequence.

We recall \cite[Theorem 3.10]{db}:  for all $i\in\{0,1,\cdots,k\}$, we have $D_i(v)\geq\min\{D(v),D'(v)\}$ for all $v\in V(G)$. This means that no vertex goes any further into debt at any point during the sequence of firing moves. Suppose that $D=D^{+}-D^{-}$ where $D\sim D'\geq 0$ and choose $v\in \supp(D^{-})$.  Since $D(v)<D'(v)$, we have that $v\notin A_0$:  if $v$ were fired at every step, the number of chips on it would not increase.  Thus $A_0\cap \supp(D^{-})=\emptyset$.  It follows that $\textrm{outdeg}_{A_0}(v')\geq D(v')$ for all $v' \in A_0$.  Since $A_0$ is disjoint from $\supp(D^{-})$, it follows that when we run Algorithm \ref{algorithm:modified_dhars}, if $D\sim D'$ with $D'\geq 0$, then the whole graph does not burn, since at the very least $A_0$ will remain unburned. Contrapositively, if the whole graph burns, then $D$ is not equivalent to any $D'$ with $D'\geq 0$.  Thus  if the algorithm returns \textsc{None}, it is correct.
\end{proof}

We now bound the time complexity of Algorithm \ref{algorithm:modified_dhars}. We begin with the following lemma, whose statement and proof are very similar to that of Lemma 5 in \cite{tardos}.

\begin{lemma}\label{lemma:number_of_firings}
For any neighboring pair of vertices, the numbers indicating how many times each vertex has been fired so far cannot differ by more than $\deg(D^+)$ at any point during Algorithm \ref{algorithm:modified_dhars}.
\end{lemma}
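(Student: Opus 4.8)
The plan is to track, throughout the execution of Algorithm \ref{algorithm:modified_dhars}, the cumulative firing count of each vertex, and to show that neighboring counts stay within $\deg(D^+)$ of each other. Concretely, after $i$ passes through the algorithm, let $\sigma_i(v)$ denote the total number of times vertex $v$ has been fired so far, so that $D_i = D - \Delta \sigma_i$ and $\sigma_{i+1} = \sigma_i + \one_{W_i}$, where $W_i$ is the unburned set fired on pass $i$. The claim to establish is that for every neighboring pair $u,v$ and every $i$, we have $\abs{\sigma_i(u) - \sigma_i(v)} \leq \deg(D^+)$.

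First I would set up the key invariant via the nonnegativity observation already used in the proof of Proposition \ref{prop:terminates}: once a vertex is fired, it never goes back into debt, and more importantly, the total number of chips on the graph is conserved and no vertex is ever pushed below its starting debt. The cleanest route is to bound the \emph{discrepancy} $\sigma_i(u) - \sigma_i(v)$ for an edge $\{u,v\}$ by relating it to chip counts. The idea, mirroring Lemma 5 in \cite{tardos}, is that if $\sigma_i(u)$ greatly exceeds $\sigma_i(v)$, then $u$ has been sending many chips toward $v$ along the edges $E(u,v)$ without $v$ reciprocating, which would force a large accumulation of chips somewhere; since only $\deg(D^+)$ chips are ever present (as $D_i^+$ has degree at most $\deg(D^+)$, debt being nonincreasing), the discrepancy cannot exceed this budget.

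The technical heart is to make this accounting precise. I would consider, at a fixed pass $i$, the set $A$ of vertices whose firing count strictly exceeds that of $v$, i.e.\ $A = \set{w : \sigma_i(w) > \sigma_i(v)}$, or more carefully a level set of $\sigma_i$; the point is that each time the boundary between ``more-fired'' and ``less-fired'' vertices is crossed along an edge, a net chip flows across, and these net flows are bounded by the effectiveness constraint enforced at each burning step. Because firing $W_i$ only occurs when $D_i(w) \geq \outdeg_{W_i}(w)$ for all $w \in W_i$, no debt is introduced, so the positive part $D_i^+$ always has degree at most $\deg(D^+)$; a vertex that has been fired $\deg(D^+)+1$ more times than a neighbor would have had to absorb strictly more than $\deg(D^+)$ net chips across that edge, exceeding the total chip supply, a contradiction. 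I would formalize this by induction on $i$, checking that a single firing step $W_i$ changes $\sigma_i(u)-\sigma_i(v)$ by at most one and cannot push a within-bound discrepancy out of bounds, since the binding constraint is exactly the burning condition that prevented over-firing.

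The main obstacle I anticipate is turning the intuitive ``chips can't pile up beyond the supply'' argument into a rigorous bound on the firing-count difference for a \emph{single} edge, rather than for a whole subset boundary. The subtlety is that firing counts are governed by subset-firing moves (the $W_i$'s), so a naive edge-by-edge argument must be replaced by a level-set argument on $\sigma_i$: one shows that whenever the discrepancy across some edge is about to exceed $\deg(D^+)$, the corresponding super-level set of $\sigma_i$ would have to hold more chips than exist. Getting the precise inequality—relating the net edge flow $\abs{E(u,v)} \cdot (\sigma_i(u)-\sigma_i(v))$ to the bounded quantity $\deg(D^+)$—and confirming it is preserved under each firing step is where the real work lies; the rest follows the template of \cite{tardos} closely.
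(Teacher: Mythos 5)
Your overall strategy matches the paper's proof: both follow Tardos by comparing firing counts across an edge via a level set of the firing-count function and bounding the net chip flow into the less-fired side. However, the proposal stops exactly where the proof actually lives, and the budget argument you sketch in its place does not yield the stated bound. You argue that since no new debt is created, ``only $\deg(D^+)$ chips are ever present,'' so a set that absorbs more than $\deg(D^+)$ net chips exceeds the supply, a contradiction. This conflates net inflow with chips held at the end: inflow into the less-fired side can also be spent cancelling debt sitting there, and the less-fired side \emph{necessarily} contains all remaining debt, because a vertex currently in debt has never been fired (the algorithm removes $\supp(D_i^-)$ from $W$ at the start of each pass, and any vertex that fires is nonnegative from that moment onward). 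Writing $f$ for the firing counts and $U=\{w\colon f(w)\le f(u)\}$ for the sub-level set through the less-fired endpoint $u$ of the edge $uv$, the accounting you describe only gives: chips on $U$ start at $\ge -\deg(D^-)$ and end at $\le \deg(D_i^+)\le\deg(D^+)$, so the net inflow is at most $\deg(D^+)+\deg(D^-)$. That is strictly weaker than the lemma.

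The observation that closes this gap (and is what the paper's compressed final sentence relies on) is the complementary fact: every vertex of $V(G)\setminus U$ has been fired more than $f(u)\ge 0$ times, hence at least once, hence carries no debt at the moment under consideration. Consequently the chips on $U$ at that moment total at most $\deg(D)$ (total degree minus a nonnegative quantity), not merely at most $\deg(D^+)$; combined with the initial bound of $-\deg(D^-)$, the increase on $U$ --- which is at least $f(v)-f(u)$, since every edge of $E(U,V(G)\setminus U)$ carries net flow into $U$ --- is at most $\deg(D)+\deg(D^-)=\deg(D^+)$, as required. Separately, your proposed induction on the pass number $i$ (``a within-bound discrepancy cannot be pushed out of bounds'') is a detour that cannot be made to work locally: the burning condition at a single step does not by itself cap the discrepancy at any threshold, so the argument must be the global accounting above, applied at whatever moment the discrepancy is examined; this is exactly how the paper argues, with no induction.
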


\begin{proof}
Suppose that after some number of steps, we have reached $D'$ by performing $f(v)$ firing moves at each vertex $v$, so that $D'=D-\Delta f$.    Fix an edge $uv\in E(G)$.   If $f(u)=f(v)$, we are done.  Suppose $f(u)< f(v)$.  Let $U\subset V(G)$ be the set of vertices which were fired at most $f(u)$ times.  This means $u\in U$ and $v\notin U$.  By the definition of $U$, for any edge connecting $U$ with $V(G) - U$, more chips moved to $U$ than from $U$.  Hence, the total number of chips on $U$ has increased by at least $f(v)-f(u)$.  Since no new debt is created at any point in Algorithm \ref{algorithm:modified_dhars} and there are $\deg(D^+)$ chips at the beginning, we have $f(v)-f(u)\leq \deg(D^+)$.  A symmetric argument holds in the case that $f(v)<f(u)$.
\end{proof}

\begin{corollary}\label{corollary:modified_dhars_runtime}
The runtime of Algorithm \ref{algorithm:modified_dhars} is $O(|V(G)|^3\textrm{diam}(G)\deg(D^+))$.
\end{corollary}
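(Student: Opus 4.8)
The plan is to bound the total runtime as the product of two quantities: the number of passes (equivalently, the number of subset-firing moves) performed by Algorithm \ref{algorithm:modified_dhars}, and the cost of a single pass. I will show that the number of passes is $O(\abs{V(G)}\,\textrm{diam}(G)\,\deg(D^+))$ and that each pass can be executed in $O(\abs{V(G)}^2)$ time; multiplying these gives the claimed bound.

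First I would bound the number of passes. Let $W_0,W_1,\dots,W_{P-1}$ be the successive subsets fired, producing $D=D_0,D_1,\dots$ with $D_{i+1}=D_i-\Delta\one_{W_i}$, and let $f(v)$ denote the total number of passes in which $v$ is fired, so that $f=\sum_i \one_{W_i}$. Since each $W_i$ is nonempty, $P\le\sum_i\abs{W_i}=\sum_v f(v)$, so it suffices to bound $\sum_v f(v)$. The crucial point is that some vertex is never fired, i.e.\ $\min_v f(v)=0$. To see this, recall from the proof of Proposition \ref{prop:terminates} that $D_i(v)\ge 0$ implies $D_j(v)\ge 0$ for all $j\ge i$; hence the debt supports $\supp(D_i^-)$ are nested decreasingly. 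When the algorithm halts, the final divisor it examines has nonempty debt support if it returns \textsc{None}, and the divisor one pass earlier has nonempty debt support if it returns an effective divisor. In either case, picking $q$ in that debt support, monotonicity forces $D_i(q)<0$ for every pass $i$, so $q$ lies in $\supp(D_i^-)$ and is therefore excluded from every $W_i$; thus $f(q)=0$. Now Lemma \ref{lemma:number_of_firings} gives $\abs{f(u)-f(v)}\le\deg(D^+)$ for adjacent $u,v$, so telescoping along a shortest path from an arbitrary vertex $v$ to $q$ gives $f(v)=f(v)-f(q)\le d(v,q)\deg(D^+)\le\textrm{diam}(G)\deg(D^+)$. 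Summing over $v$ yields $\sum_v f(v)\le\abs{V(G)}\,\textrm{diam}(G)\,\deg(D^+)$, and hence $P\le\abs{V(G)}\,\textrm{diam}(G)\,\deg(D^+)$.

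Next I would bound the cost of a single pass by $O(\abs{V(G)}^2)$. Representing the multigraph by its $\abs{V(G)}\times\abs{V(G)}$ matrix of edge-multiplicities, a pass consists of the burning process followed by one subset-firing. For the burning process I would maintain, for each unburned vertex, a running count of its incident burning edges, initialized from the current debt support; each vertex burns at most once, and when a vertex burns we update the counts of its neighbors in $O(\abs{V(G)})$ time and test the condition $D(v)<\outdeg_W(v)$ against these counts in $O(1)$. Over the at most $\abs{V(G)}$ burns this is $O(\abs{V(G)}^2)$. Finally, computing $D-\Delta\one_W$ for the surviving set $W$ is a single matrix–vector operation costing $O(\abs{V(G)}^2)$. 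Multiplying the per-pass cost by the number of passes gives the total runtime $O(\abs{V(G)}^3\,\textrm{diam}(G)\,\deg(D^+))$.

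The main obstacle is the claim that some vertex is never fired; without it, Lemma \ref{lemma:number_of_firings} only controls $\max_v f(v)-\min_v f(v)$ and leaves open the possibility of unbounded uniform over-firing that would inflate the pass count. The resolution is the monotonicity of debt already recorded in the proof of Proposition \ref{prop:terminates}: because a vertex can only leave debt permanently, any vertex lying in the debt support at the moment of termination must have been in debt, and thus unfired, throughout the entire run, pinning $\min_v f(v)=0$.
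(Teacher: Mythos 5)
Your proof is correct and takes essentially the same route as the paper's: it bounds the cost of one pass by $O(|V(G)|^2)$ and the number of passes by $|V(G)|\,\textrm{diam}(G)\deg(D^+)$, using the existence of a never-fired vertex together with Lemma \ref{lemma:number_of_firings} to cap each vertex's firing count at $\textrm{diam}(G)\deg(D^+)$. Your argument via debt-monotonicity from Proposition \ref{prop:terminates} simply makes rigorous the paper's passing assertion that some vertex remains in debt (and hence unfired) throughout the run.
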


\begin{proof}
Each pass through the algorithm runs in $O(|V(G)|^2)$ time: in the worst case, the vertices of $W$ burn one-by-one so we need to check at most $|V(G)|$ vertices at most $|V(G)|$ times each.

By construction, there is at least one vertex $q\in V(G)$ that does not fire over the course of the whole algorithm, since at least one vertex will be in debt the entire time (except possibly when it is brought out of debt at the very end).  By Lemma \ref{lemma:number_of_firings}, no neighbors of $q$ can fire more than $\deg(D^+)$ times; and therefore no neighbors of those vertices can fire more than $2\deg(D^+)$ times; and so on.  In general, no vertex of the graph can be fired more than $\textrm{diam}(G)\deg(D^+)$ times.  At least one vertex will fire on each pass through the algorithm (or else the algorithm will terminate), meaning that the total number of passes through the algorithm is at most $|V(G)|\textrm{diam}(G)\deg(D^+)$.  Combined with the running time of each individual pass, we have the desired result.
\end{proof}

It is worth noting that the time complexity computed in Corollary \ref{corollary:modified_dhars_runtime} is no better than that of Algorithm \ref{originaldhars}.  One upper bound for the runtime of that algorithm (ignoring the number of chips), presented as Algorithm 4 in \cite{bs}, is $3(|V(G)|-1)\textrm{diam}(G)\sum_{v\neq q}\deg(v)$. Since $\sum_{v\neq q}\deg(v)$ is $O(|V(G)|^2)$, this essentially matches our bound.  The work done in  \cite{bs} actually provides a more general bound on the runtime of the existing algorithm, derived by considering potential theory on graphs, of which the bound we recall here is only a corollary. It would be interesting to analyze the runtime of Algorithm \ref{algorithm:modified_dhars} through such a lens in future work.

We now describe a brute-force algorithm to determine if $\gon_r(G)>k$.

\begin{itemize}
    \item Choose a divisor $D\in \textrm{Div}_+^k(G)$ and a divisor $E\in  \textrm{Div}_+^r(G)$.  Run Algorithm \ref{algorithm:modified_dhars} on $D-E$. If it returns a divisor, move on to a new divisor $E$.  Keep testing $D-E$ for all possible divisors $E \in  \textrm{Div}_+^r(G)$ until either \textsc{None} is returned or until all possible choices of $E$ have produced a divisor.  If $D-E$ is equivalent to an effective divisor for every choice of $E$, return \textsc{False}, since the $r^{th}$ gonality is at most $\deg(D)=k$.

    \item If running Algorithm \ref{algorithm:modified_dhars} on $D-E$ returns \textsc{None}, then we know $r(D)<r$, so we can move on to a new divisor $D$ and repeat this process.  We will eventually either find $D$ with $r(D)\geq r$ (and return \textsc{False}) or we will find that there exists no such $D$ (and return \textsc{True}).

\end{itemize}

Setting $n=|V(G)|$, we have $|\textrm{Div}_+^k(G)|=\binom{n+k-1}{k}$ and  $|\textrm{Div}_+^r(G)|=\binom{n+r-1}{r}$. The runtime of Algorithm \ref{algorithm:modified_dhars} on $D-E$ is $O(n^3\textrm{diam}(G)\deg((D-E)^+))=O(n^3\textrm{diam}(G)k)$. Thus, the algorithm we have presented to determine if $\gon_r(G) > k$ will run in
\[O\left(\binom{n+k-1}{k}\binom{n+r-1}{r}n^3\textrm{diam}(G)k\right)\]
time.  For fixed $r$ and $k$, we can write this as
\[O\left(n^{k+r+3}\textrm{diam}(G)k\right),\]
or more concisely as
\[O\left(n^{k+r+4}k\right)\]
since $\textrm{diam}(G)\leq n-1$. This means that for fixed $k$ and $r$, there exists an algorithm, polynomial in $|V(G)|$, for determining if $\gon_r(G)>k$.

Of course, in order to compute $\gon_r(G)$, we would need to run this algorithm for numerous values of $k$, requiring an additional factor of $O(g(G))$ in the worst case. This quickly gives us a huge blow-up in computational time.  In the case of $r=1$, it was shown in \cite{gij} that it is NP-hard to bound gonality by $k$ (with $k$ no longer fixed).  We expect that the same holds for larger values of $r$, although we do not know of any work in this direction.

Experimentally, we find that using our modified algorithm to compute higher gonalities provides a modest improvement in real-time performance against the traditional approach, which is to $q$-semi-reduce and then compute the $q$-reduced divisor, over varying divisors $D-E$.  We also found that performance gains appear to increase with the number of vertices in the graph (see Figure \ref{figure:dhars_runtime_comparison}).  To conduct this analysis, we generated random connected graphs by fixing the number of vertices, inserting edges with probability $p = 0.5$, and excluding disconnected graphs. The two algorithms were compared on the same subset of 19 randomly generated connected graphs for each fixed number of vertices. Our implementation was in \texttt{Sage} \cite{sagemath}, and is available as supplementary material to this paper.  Also available is the data of our runtimes; we ran our code on a Lenovo ThinkPad X1 Carbon 4th edition, Intel Core i5-6200U, with 8GB of RAM.   We remark that most of the savings in our data come from the fact that the original iterated Dhar's algorithm spends time $q$-reducing even after debt is eliminated (without this issue, the two plots should be identical for \(r=1\)).  Running an early-return iterated Dhar's algorithm that terminates if debt is eliminated, we found a nearly identical performance to our modified algorithm.

\begin{figure}[hbt]
    \centering
\begin{tikzpicture}
\begin{axis}[
    title={\(r=1\)},
    xlabel={Number of Vertices},
    ylabel={Average Time Taken (in sec)},
    xmin=3.5, xmax=10.5,
    ymin=-0.1, ymax=2.6,
    xtick={4,5,6,7,8,9,10},
    ytick={0,0.5,1,1.5,2,2.5},
    legend pos=north west,
    ymajorgrids=true,
    grid style=dashed,
]

\addplot[
    color=blue,
    mark=triangle,
    ]
    coordinates {
    (4,0.00103)(5,0.0027)(6,0.0083)(7,0.0644)(8,0.139)(9,0.836)(10,2.451)
    };
    
\addplot[
    color=red,
    mark=square,
    ]
    coordinates {
    (4,0.0013)(5,0.0027)(6,0.0079)(7,0.05265)(8,0.114)(9,0.626)(10,1.784)
    };
    
\end{axis}
\end{tikzpicture}  \begin{tikzpicture}
\begin{axis}[
    title={\(r=1\)},
    xlabel={Number of Vertices},
    ylabel={},
    xmin=3.5, xmax=10.5,
    ymin=-5, ymax=85,
    xtick={4,5,6,7,8,9,10},
    ytick={0,20,40,60,80},
    legend pos=north west,
    ymajorgrids=true,
    grid style=dashed,
]

\addplot[
    color=blue,
    mark=triangle,
    ]
    coordinates {
(4,0.0053)(5,0.0186)(6,0.1096)(7,0.439)(8,1.903)(9,12.184)(10,74.361)
    };
    
\addplot[
    color=red,
    mark=square,
    ]
    coordinates {
(4,0.00593)(5,0.022)(6,0.1073)(7,0.3901)(8,1.6586)(9,9.8717)(10,56.298)
    };    
\end{axis}
\end{tikzpicture} \begin{tikzpicture}
\begin{axis}[
    title={\(r=1\)},
    xlabel={Number of Vertices},
    ylabel={},
    xmin=3.5, xmax=10.5,
    ymin=-50, ymax=1050,
    xtick={4,5,6,7,8,9,10},
    ytick={0,200,400,600,800,1000},
    legend pos=north west,
    ymajorgrids=true,
    grid style=dashed,
]

\addplot[
    color=blue,
    mark=triangle,
    ]
    coordinates {
(4,0.0108)(5,0.0911)(6,0.4952)(7, 4.529)(8,17.5101)(9,128.959)(10,815.069)
    };
    
\addplot[
    color=red,
    mark=square,
    ]
    coordinates {
(4,0.0149)(5,0.1007)(6,0.4823)(7,3.4585)(8,14.5719)(9,93.5253)(10,608.388)
    };
    
\end{axis}
\end{tikzpicture}
    \caption{Comparison of original Dhar's algorithm and our modified algorithm on graphs of low gonality (original in blue triangles, modified in red squares)}
    \label{figure:dhars_runtime_comparison}
\end{figure}
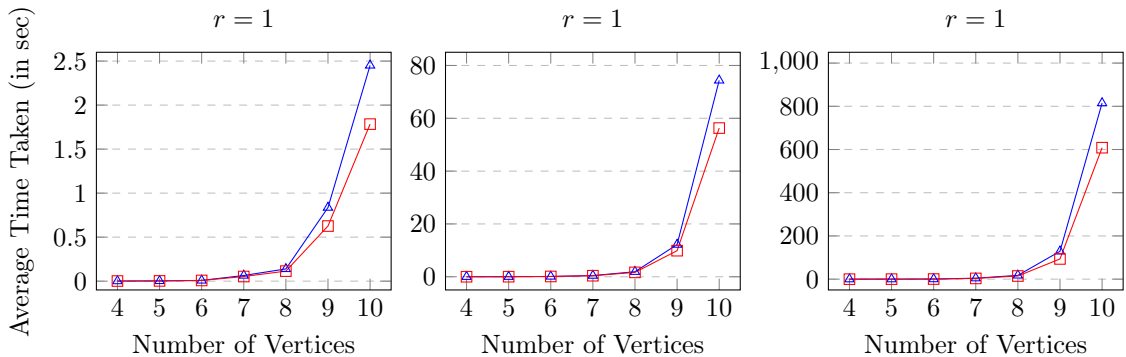

\medskip

\noindent \textbf{Acknowledgements}  The authors are grateful for the support they received from Williams College's SMALL REU, and from NSF Grants DMS1659037 and
DMS1347804.  They also thank two anonymous referees for many helpful comments.

\bibliographystyle{abbrv}
\bibliography{bibliography.bib}


\end{document}